\documentclass[a4paper,reqno]{article}
\usepackage{amsmath,amsfonts,amssymb,amsthm,bbm,graphicx,enumerate,geometry}
\usepackage{mathrsfs}

\usepackage{subfigure} 

\usepackage{color}

\usepackage{framed} 
\usepackage{hyperref}  
\hypersetup{
    linktoc=page,
    linkcolor=red,          
    citecolor=blue,        
    filecolor=blue,      
    urlcolor=cyan,
   colorlinks=true           
}
\usepackage[
  backend=biber,
  style=alphabetic,
  autocite=inline,
]{biblatex}

\addbibresource{ownpubs.bib}
\addbibresource{reference.bib}

\geometry{dvips,a4paper,margin=1.2in}
\numberwithin{equation}{section}

\newtheorem{thm}{Theorem}[section]

\newtheorem{defn}[thm]{Definition}
\newtheorem{example}[thm]{Example}
\newtheorem{remark}[thm]{Remark}


\newtheorem{conjecture}[thm]{Conjecture}



\renewcommand{\epsilon}{\varepsilon}




\def\<#1{\langle #1\rangle}

\begin{document}{\allowdisplaybreaks[4]}


\title{Asymptotic of Coulomb gas integral, Temperley-Lieb type algebras and pure partition functions}
\author{
    Jiaxin Zhang
    \footnotemark[1]
   }
\renewcommand{\thefootnote}{\fnsymbol{footnote}}

\footnotetext[1]{{\bf zhangjx@caltech.edu} Department of Mathematics, California Institute of Technology}

\maketitle
\begin{abstract}

In this supplementary note, we study the asymptotic behavior of several types of Coulomb gas integrals following techniques similar to those in \cite{FK15c} and construct the pure partition functions for multiple radial $\mathrm{SLE}(\kappa)$ and general multiple chordal $\mathrm{SLE}(\kappa)$ systems.

For both radial and chordal cases, we prove the linear independence of the ground state solutions $\mathcal{J}_{\alpha}^{(m,n)}(\boldsymbol{x})$ to the null vector equations for irrational values of $\kappa \in (0,8)$.

In particular, we show that the ground state solutions $\mathcal{J}^{(m,n)}_\alpha \in B_{m,n}$, indexed by link patterns $\alpha$ with $m$ screening charges, are linearly independent when $\kappa$ is irrational. This is achieved by constructing, for each link pattern $\beta$, a dual functional $l_\beta \in B^{*}_{m,n}$ such that the meander matrix of the corresponding Temperley-Lieb type algebra is given by $M_{\alpha\beta} = l_{\beta}(\mathcal{J}^{(m,n)}_\alpha)$. The determinant of this matrix admits an explicit expression and is nonzero for irrational $\kappa$, establishing the desired linear independence.

As a consequence, we construct the pure partition functions $\mathcal{Z}_{\alpha}(\boldsymbol{x})$ of the multiple $\mathrm{SLE}(\kappa)$ systems for each link pattern $\alpha$ by multiplying the inverse of the meander matrix.

This method can also be extended to the asymptotic analysis of the excited state solutions $\mathcal{K}_{\alpha}$ in both radial and chordal cases.
\par
\textbf{Keywords}: Schramm-Loewner evolution (SLE),  Coulomb gas integral, affine Temperley-Lieb algebra, meander matrix.

\end{abstract}

\newpage 

\tableofcontents

\newpage 
\section{Introduction}

\subsection{Background and main results}

\indent The Schramm-Loewner evolution ($SLE_{\kappa}$) with $\kappa>0$ is a one-parameter family of random conformally invariant curves in the plane, as introduced by Schramm \cite{Sch00}. SLEs are closely related to conformal field theory (CFT), as can be seen in references like \cite{BB03a, Car03, FW03, FK04, Dub15a, Dub15b, Pel19}. The parameter $\kappa$ characterizes the roughness of these fractal curves and determines the central charge $c(\kappa)=(3 \kappa-8)(6-\kappa) / 2 \kappa$ of the associated CFT. 

Standard multiple chordal SLE($\kappa$) systems have been extensively studied in various works including \cite{Dub06,FK15a,KL07,Law09b,PW19,PW20}. Multiple radial SLE($\kappa$) systems have been studied in literature such as \cite{SKFZ11, FKZ12, MZ24,MZ25,Zh25th} and more general multiple chordal SLE($\kappa$) systems in \cite{Z25a,Z25b}.

The Temperley–Lieb-type algebras (see \cite{MDSA13, RSA14}, and section 5 for a brief review) play a fundamental role in the study of two-dimensional critical phenomena, particularly through their connection to loop models, representation theory, and Schramm–Loewner evolutions (SLE). These algebras encode the combinatorial structure of non-crossing pairings and naturally appear in the classification of multiple SLE connectivity patterns via link patterns.

In this note, we compute the asymptotic behavior of Coulomb gas solutions associated with multiple SLE($\kappa$) systems.

Our main observation is that these asymptotics coincide with the entries of the meander matrix, which is the Gram matrix for the standard module of the Temperley–Lieb algebra.

Let $\mathcal{J}^{(m,n)}_\alpha$ (see definitions \ref{chordal ground} and \ref{radial ground}) denote the Coulomb gas integral corresponding to a link pattern $\alpha$ constructed using screening charges along prescribed contours. Each such function admits a well-defined asymptotic when the insertion points on the real line are fused according to a link pattern $\beta$.

\begin{thm}[Chordal asymptotics and Gram pairing]\label{chordal meander}
For a fixed link pattern $\beta$, define the evaluation functional $l_\beta$ acting on Coulomb gas integrals by taking the corresponding asymptotic limit:
\[
l_{\beta}\left( \mathcal{J}^{(m,n)}_\alpha \right) := \lim_{\text{asymptotic with respect to } \beta} \mathcal{J}^{(m,n)}_\alpha.
\]
Then we have the identity:
\[
l_{\beta} \left( \mathcal{J}^{(m,n)}_\alpha \right) = \left\langle \alpha , \beta \right\rangle_{n,m},
\]
where $\delta= n(\kappa) = -2\cos\left(\frac{4\pi}{\kappa}\right)$, the right-hand side is the inner product in the standard module $V_{n,m}$ computed via the Gram (meander) matrix.
\end{thm}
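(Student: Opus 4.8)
The plan is to prove the identity by induction on $n$, peeling off one innermost arc of the link pattern $\beta$ at a time. By definition the asymptotic ``with respect to $\beta$'' is realized as a nested sequence of elementary limits, each bringing two currently adjacent insertion points $x_i,x_{i+1}$ together; organizing them so that we always collapse an arc that is innermost in $\beta$, the whole statement reduces to a single-fusion lemma. Concretely, if $\{i,i+1\}$ is innermost in $\beta$ and $\Delta$ is the exponent prescribed by the null-vector (Kac) weights in Definition~\ref{chordal ground}, the lemma I aim to prove has the form
\[
\lim_{x_{i+1}\to x_i}(x_{i+1}-x_i)^{-\Delta}\,\mathcal{J}^{(m,n)}_\alpha
=\begin{cases}
\delta\,\mathcal{J}^{(m-1,\,n-1)}_{\widehat\alpha}, & \text{if }\alpha\text{ links }i\leftrightarrow i+1,\\[2pt]
\mathcal{J}^{(m',\,n-1)}_{\alpha'}, & \text{otherwise,}
\end{cases}
\]
where $\widehat\alpha$ (resp.\ $\alpha'$) is the link pattern on the remaining $2(n-1)$ points obtained from $\alpha$ by the Temperley--Lieb cap-action at $\{i,i+1\}$: deleting the arc in the first case, and deleting the two points and joining their $\alpha$-partners in the second. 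Iterating this lemma down to the empty configuration produces exactly $\delta^{(\text{number of loops of }\alpha\cup\beta)}\cdot\mathcal{J}^{(0,0)}$, which is the required $\langle\alpha,\beta\rangle_{n,m}$.

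Next I would establish the single-fusion lemma directly from the integral representation. Writing $\mathcal{J}_\alpha$ as a multiple Coulomb gas integral over screening variables $u_1,\dots,u_m$ along the contours dictated by $\alpha$, I split the screening charges into those pinched between $x_i$ and $x_{i+1}$ in the limit and those bounded away from the collision. A power-counting estimate on the fractional exponents (coming from the boundary weights and from the screening charges) shows that the dominant contribution as $x_{i+1}-x_i\to 0$ comes from the configuration in which the pinched block carries exactly the screening content forced by the fusion rule $V_{1,2}\times V_{1,2}\to V_{1,1}$; every other distribution of screening charges contributes a strictly higher power of $x_{i+1}-x_i$ and drops out of the limit. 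Rescaling the pinched variables turns their integral, in the limit, into a Selberg-type integral that I evaluate in closed form: with the normalization of $\mathcal{J}^{(m,n)}_\alpha$ fixed in Definition~\ref{chordal ground}, this constant is $1$ when the cap-action creates no new loop, while when $\alpha$ itself links $i\leftrightarrow i+1$ the pinched contour closes into a small loop around the merged point and the corresponding product of Gamma factors collapses --- via the reflection formula $\Gamma(z)\Gamma(1-z)=\pi/\sin\pi z$ --- to precisely $-2\cos(4\pi/\kappa)=\delta$. The surviving, non-pinched factors assemble into the Coulomb gas integral defining $\mathcal{J}_{\widehat\alpha}$ (resp.\ $\mathcal{J}_{\alpha'}$) on the reduced point configuration, so the induction closes.

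It remains to match this with the recursion satisfied by the Gram pairing. In the standard module $V_{n,m}$, capping an innermost arc $\{i,i+1\}$ of $\beta$ is the action of the Temperley--Lieb generator $e_i$ on $\alpha$: it closes a loop and multiplies by $\delta$ when $\alpha$ already links $i\leftrightarrow i+1$, and otherwise removes the pair and reconnects the $\alpha$-partners with no weight (and, in the presence of through-lines, it annihilates the pairing when a cap connects two defects --- a case that on the Coulomb gas side corresponds to the vanishing of the pinched integral). Hence $\langle\alpha,\beta\rangle_{n,m}$ obeys exactly the recursion of the single-fusion lemma, with the same case split, the same $\delta$-factors, and the same reduced patterns $\widehat\alpha,\alpha'$. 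Both sides agree in the empty case ($n=m=0$, where $\mathcal{J}^{(0,0)}\equiv 1$ and $\langle\emptyset,\emptyset\rangle=1$), so $l_\beta(\mathcal{J}^{(m,n)}_\alpha)=\langle\alpha,\beta\rangle_{n,m}$ for all link patterns $\alpha,\beta$, which is the claim.

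The main obstacle is the single-fusion lemma, and within it two points: (i) showing that only the loop-compatible distribution of screening charges across the colliding interval contributes at leading order --- a uniform estimate on the fractional exponents that requires care with the branch structure of the integrand (Pochhammer-type contours), and where the hypothesis that $\kappa$ is irrational is convenient for avoiding resonances between the exponents; and (ii) the exact evaluation of the resulting Selberg-type integral together with the verification that, under the chosen normalization, it equals $1$ or $-2\cos(4\pi/\kappa)$ on the nose rather than merely up to a $\kappa$-dependent constant. This is precisely the analysis carried out for the standard chordal case in \cite{FK15c}; the content here is that it goes through with the modifications needed for the Coulomb gas integrals and contours of Definition~\ref{chordal ground}, and (as noted in the introduction) the same scheme then transfers to the radial and general-chordal settings.
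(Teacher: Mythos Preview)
Your proposal is correct and follows essentially the same route as the paper. The paper's argument (the sketch after Theorem~\ref{chordal meander} together with Theorem~\ref{Asymptotic analysis of Coulomb gas correlation}) is exactly an iterated single-fusion computation matched against the Temperley--Lieb cap recursion; your two-case lemma (plus the defect--defect vanishing you note parenthetically) is just a coarser packaging of the paper's four/five ``Configurations,'' and your Selberg/power-counting description of the pinched integral corresponds to the paper's explicit contour-deformation computations in Sections~3.1--3.4. Two small notational slips to fix: in the paper's conventions $n$ is the number of boundary points (not pairs), so your reduced integrals should read $\mathcal{J}^{(m-1,n-2)}$ rather than $\mathcal{J}^{(m-1,n-1)}$; and when $n>2m$ the iteration terminates not at the empty configuration but at the $0$-link state $\mathcal{J}^{(0,n-2m)}$ on the $\beta$-defects, so $l_\beta$ implicitly includes a normalization by that fermionic ground solution.
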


\begin{defn}[Inner product on standard module]
Let $x, y \in V_{n,m}$ be two $(n,m)$-link states and $\delta \in \mathbb{C}$. Define $\langle x, y \rangle_{n,m}$ as follows:
\begin{enumerate}
    \item Reflect $x$ across a vertical axis (mirror image).
    \item Glue $x$ to $y$ vertically, forming a closed link diagram.
    \item If all defects pair, and $l(\alpha,\beta)$ closed loops are formed, then
    \[
    \langle x, y \rangle_{n,m} := \delta^{l(\alpha,\beta)}.
    \]
    \item Otherwise, if any defects remain unpaired, then $\langle x, y \rangle_{n,m} := 0$.
\end{enumerate}
\end{defn}

\begin{defn}[Chordal meander Matrix]
Let $LP(n,m)$ be the set of chordal link patterns on $n$ boundary points, consisting of $m$ non-crossing pairings of $n$ points on the real line. The \emph{meander matrix} $\mathcal{M}_{\kappa}(\alpha,\beta)$ is the Gram matrix associated with the bilinear form
\[ \langle \alpha , \beta \rangle_{n,m}
\]
where $\alpha, \beta \in LP(n,m)$, $\delta = n(\kappa)=-2\cos(\frac{4\pi}{\kappa})$ is a loop weight parameter.

\end{defn}

\begin{thm}[Radial asymptotics and affine Gram pairing]\label{radial meander}
Let $\mathcal{J}^{(m,n)}_\alpha$ denote the Coulomb gas integral associated with an affine link pattern $\alpha$, and let $\beta$ be another such pattern. Define the evaluation functional $l_{\beta}$ by:
\[
l_{\beta}\left( \mathcal{J}^{(m,n)}_\alpha \right) := \lim_{\text{asymptotic with respect to } \beta} \mathcal{J}^{(m,n)}_\alpha.
\]
Then we have the identity:
\[
l_{\beta} \left( \mathcal{J}^{(m,n)}_\alpha \right) = \left\langle \alpha \mid \beta \right\rangle_{n,m},
\]
where the Gram pairing is computed with loop weights $a = n(\kappa) = -2\cos(\frac{4\pi}{\kappa})$,  $b = 2$.
\end{thm}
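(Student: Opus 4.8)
The plan is to reproduce, in the radial setting, the contour-deformation and operator-product analysis behind Theorem~\ref{chordal meander}, and then to isolate the one genuinely new contribution of the radial picture: screening contours that wind around the bulk puncture.

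First I would write $\mathcal{J}^{(m,n)}_\alpha$ in the explicit Coulomb-gas form of Definition~\ref{radial ground}: $2n$ boundary charges at $x_1<\cdots<x_{2n}$, the bulk charge at the origin together with its conjugate image fixing the radial $\mathrm{SLE}(\kappa)$ weight, and $m$ screening charges integrated over the system of cycles prescribed by the affine link pattern $\alpha$. Then I would realize the ``asymptotic with respect to $\beta$'' as an iterated limit in which, for each arc $\{i,j\}\in\beta$, the points are fused $x_i\to x_j$ starting from an innermost nested arc and proceeding outward, extracting at each stage the coefficient of the power of the fusion parameter dictated by the channel $\phi_{1,2}\times\phi_{1,2}\to\mathbf{1}+\phi_{1,3}$. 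As in \cite{FK15c}, a power count identifies the leading term of each fusion; after extracting the predicted power, removing an arc $\{i,j\}\in\beta$ either leaves the Coulomb-gas integral on the remaining points unchanged (the fusion constant being normalized to $1$) or, when a screening charge is caught on the pinched arc, contracts that arc together with the matching arc of $\alpha$ into a small circle evaluating to $-2\cos(4\pi/\kappa)=n(\kappa)$. For the part of the overlay diagram that does not meet a neighbourhood of the origin this is exactly the chordal computation, and Theorem~\ref{chordal meander} can be invoked verbatim.

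Iterating these local moves, the surviving screening contours organize into the closed loops of the meander diagram obtained by gluing $\alpha$ to the mirror image of $\beta$ on the annulus; each loop bounding a disc in the punctured disc contributes $a=n(\kappa)$. The new case is a loop that winds once around the origin: after all boundary fusions it is an honest closed screening contour encircling the bulk charge, and the two ways of sliding it past that insertion produce terms $e^{2\pi i s}+e^{-2\pi i s}=2\cos(2\pi s)$, where $s$ is the monodromy exponent of a screening operator around the bulk field; with the bulk charge fixed by Definition~\ref{radial ground} this equals $2$, which is precisely the non-contractible loop fugacity of the standard affine Temperley--Lieb module reviewed in Section~5, so each such loop contributes $b=2$. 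If, on the other hand, the defect (through-line) structures of $\alpha$ and $\beta$ are incompatible, then at the corresponding stage the fusion yields either a vanishing residue or a strictly positive power of the fusion parameter, so the extracted coefficient is $0$, matching the vanishing clause of the affine Gram pairing.

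Assembling the three cases gives $l_\beta(\mathcal{J}^{(m,n)}_\alpha)=a^{\#\{\text{contractible loops}\}}\,b^{\#\{\text{non-contractible loops}\}}=\langle\alpha\mid\beta\rangle_{n,m}$ with $a=n(\kappa)$ and $b=2$, and the commutativity of nested operator-product limits shows that $l_\beta$ is independent of the order of fusions and hence a well-defined element of $B^{*}_{m,n}$. The hard part will be the global control of the screening-contour deformations on the punctured disc---rather than near each fused pair in isolation---so as to (i) identify unambiguously which residual loops wind around the origin, (ii) verify that the winding contribution is exactly $2$ and not some other value $2\cos(2\pi s)$, and (iii) show that the terms discarded at each stage are genuinely subleading, uniformly in the remaining coordinates. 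Once the combinatorics of the annular meander diagram is set up, the analytic estimates are routine adaptations of those in \cite{FK15c}.
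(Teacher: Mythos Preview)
Your overall strategy matches the paper's: realize $l_\beta$ as an iterated collapse of boundary points along the arcs of $\beta$, and at each step classify how the collapsing pair sits relative to the screening contours of $\alpha$. The paper organizes this into five explicit Configurations (Section~4), of which Configurations~1--4 are formally identical to the chordal analysis and handle the contractible-loop, defect-pairing, and vanishing cases; your proposal to quote Theorem~\ref{chordal meander} for these is essentially what the paper does, though it repeats the computations with the extra $(z-u_k)(\bar z-u_k)$ factors carried along passively.

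The substantive difference is in how you extract $b=2$ for a non-contractible loop. The paper does \emph{not} use a monodromy computation. In Configuration~5, when an arc of $\alpha$ joins $x_i,x_{i+1}$ on one side of the puncture while the arc of $\beta$ being collapsed joins them on the complementary side, the paper simply splits the screening contour $\Gamma_1$ at the neighbouring points into three arcs $\Gamma_1'+\Gamma_1''+\Gamma_1'''$; the middle piece $\Gamma_1''$ has neither collapsing point as an endpoint, so it falls under Configuration~1 and contributes $0$, while each of the two end pieces $\Gamma_1',\Gamma_1'''$ is an instance of Configuration~3 and contributes exactly~$1$. Thus $b=2$ arises as $1+1$ from two Case-3 terms, not as $2\cos(2\pi s)$ for a monodromy exponent that happens to be an integer. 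Your proposed mechanism---regarding the residual screening contour as an honest closed loop around the bulk insertion and reading off its weight from the monodromy of the screening operator around the charge $\sigma_z=b-\tfrac{(n-2m)a}{2}$---would require verifying that the relevant exponent is integral at every stage of the iteration (the charges depend on the running values of $n,m$), and making sense of a closed contour on a multi-sheeted surface. This is not obviously true and in any case is unnecessary: the paper's three-piece decomposition sidesteps the issue entirely and is the cleaner route you should take for~(ii).
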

\begin{defn}[Inner product on affine standard module]
Let $x, y \in V_{n,m}$ be two radial $(n,m)$-link states and $a,b \in \mathbb{C}$. Define the inner product $\langle x \mid y \rangle_{n,m}$ as follows:
\begin{enumerate}
    \item Reflect $x$ across a vertical axis (mirror image).
    \item Glue $x$ to $y$ vertically, forming a closed link diagram called $\mathcal{G}(\alpha, \beta)$.
    \item If all defects pair, and $m$ closed loops are formed, then
    \[
    \langle x \mid y \rangle_{n,m} := a^{n_a}b^{n_b}.
    \]
    where $n_a$ and $n_b$ denote the numbers of contractible and non-contractible loops formed in the diagram $\mathcal{G}(\alpha, \beta)$, and $a, b$ are loop weights.
    \item Otherwise, if any defects remain unpaired, then $\langle x \mid y \rangle_{n,m} := 0$.
\end{enumerate}
\end{defn}

\begin{defn}[Radial meander Matrix]
Let $LP(n,m)$ be the set of chordal link patterns on $n$ boundary points, consisting of $m$ non-crossing pairings of $n$ points on the real line. The \emph{meander matrix} $\mathcal{M}_{\kappa}(\alpha,\beta)$ is the Gram matrix associated with the bilinear form
\[ \langle \alpha \mid \beta \rangle_{n,m}
\]
where $\alpha, \beta \in LP(n,m)$, $a = n(\kappa)=-2\cos(\frac{4\pi}{\kappa})$ and $b=2$ are two loop weight parameters.

\end{defn}

\begin{proof}[Sketch of proof of theorem \ref{chordal meander} and \ref{radial meander}]
This follows from the asymptotics of the Coulomb gas integrals: in the limit where insertion points coalesce according to the arcs in $\beta$, the contour integrals simplify. The resulting expression matches the diagrammatic pairing rules of the Gram form.
\end{proof}

\begin{thm}{See \cite{RSA14,MDSA13}}
    The meander matrices $\mathcal{M}_{\alpha\beta}$ in both radial and chordal cases are invertible for $\kappa \notin \mathbb{Q}$.
\end{thm}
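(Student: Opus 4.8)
The plan is to reduce the claim to the non-vanishing of an explicit Gram determinant and then to an elementary fact about quantum integers. By Theorems~\ref{chordal meander} and~\ref{radial meander}, the meander matrix $\mathcal{M}_{\kappa} = (\mathcal{M}_{\alpha\beta})$ is exactly the Gram matrix of the invariant bilinear form on the standard module $V_{n,m}$ of the relevant Temperley--Lieb-type algebra, with loop weight $\delta = n(\kappa) = -2\cos(4\pi/\kappa)$ in the chordal case and loop weights $a = n(\kappa)$, $b = 2$ in the radial case; hence $\mathcal{M}_{\kappa}$ is invertible if and only if $\det\mathcal{M}_{\kappa}\neq 0$. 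Set $q = e^{4\pi i/\kappa}$, so that $n(\kappa) = -(q+q^{-1})$, and write $[k]_q = (q^{k}-q^{-k})/(q-q^{-1})$ for the quantum integers, which are (up to normalisation) the Chebyshev polynomials of the second kind in the variable $\delta$. For the chordal case the Gram determinant of the standard module of $\mathrm{TL}_n(\delta)$ with $m$ arcs is the classical meander determinant of Di~Francesco--Golinelli--Guitter (equivalently the cellular Gram determinant of Graham--Lehrer): it is a finite product $\det\mathcal{M}_{\kappa} = \prod_{k}\bigl([k+1]_q/[k]_q\bigr)^{d_{n,m,k}}$, with non-negative integer exponents $d_{n,m,k}$ (dimensions of standard modules, each a difference of two binomial coefficients), and the product is a polynomial in $\delta$. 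For the radial case one instead quotes the affine Temperley--Lieb Gram determinant of~\cite{RSA14,MDSA13}, specialised to the twist $z = 1$ fixed by $b = z + z^{-1} = 2$; its factors are again quantum integers $[k]_q$, multiplied by the elementary contribution of $b = 2$, which is a nonzero constant.

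The non-vanishing for irrational $\kappa$ is then immediate. A quantum integer $[k]_q$, $k\geq 1$, vanishes if and only if $q^{2k} = 1$ and $q^{2}\neq 1$, i.e.\ if and only if $q$ is a root of unity of order dividing $2k$ and exceeding $2$; since $q = e^{4\pi i/\kappa}$ this forces $4/\kappa\in\Q$, equivalently $\kappa\in\Q$. So for $\kappa\notin\Q$ no factor in the product above vanishes (and the product is well defined, since then $q\neq\pm 1$), whence $\det\mathcal{M}_{\kappa}\neq 0$ and $\mathcal{M}_{\kappa}$ is invertible. Equivalently one may argue representation-theoretically: for $\kappa\notin\Q$ the parameter $q$ is not a root of unity, so $\mathrm{TL}_n(\delta)$ and the finite-dimensional quotient of the affine algebra carrying the radial link patterns are semisimple, hence every standard module is simple and carries a non-degenerate invariant form.

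The chordal case is then essentially classical. The genuinely delicate point — and the expected main obstacle — is the radial case: the affine Temperley--Lieb algebra is infinite-dimensional and its standard modules depend on a twist parameter, so one must (i) identify precisely which affine standard module the radial $(n,m)$-link patterns with $b = 2$ realise, and (ii) invoke the Gram determinant formula of~\cite{RSA14,MDSA13} for exactly that module, checking that the specialisation $z = 1$ introduces no accidental zero beyond the quantum-integer factors controlled above. Once this bookkeeping is settled, the argument of the previous paragraph applies verbatim in both the chordal and the radial cases.
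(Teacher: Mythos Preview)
Your proposal is correct and follows the same route as the paper: the theorem is taken from \cite{RSA14,MDSA13}, and the paper's Section~5 records the explicit Gram determinant formulas (the recursive $q$-integer product in the chordal case and the product $\prod_k\bigl(4-4\cos^2(4(k+d/2)\pi/\kappa)\bigr)$, resp.\ $\prod_k\bigl(a^2-4\cos^2(4k\pi/\kappa)\bigr)$, in the radial case), then remarks that these are nonzero for irrational~$\kappa$. Your write-up is in fact more careful than the paper's in isolating the one genuine subtlety---the identification of the radial link-pattern module with the affine standard module at twist $z=1$ and the check that this specialisation creates no extra zero---but the underlying argument is identical.
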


The pure partition function $\mathcal{Z}_\beta$, formally defined in (\ref{pure partition function}), corresponds to a multiple SLE($\kappa$) system exhibiting a prescribed link pattern $\beta$.
As an application of the main theorem, we construct a family of such partition functions in Section~\ref{asymptotics and connectivity} via the formula
\begin{equation}
\mathcal{Z}_\beta(\boldsymbol{x}) = \sum_{\alpha \in \operatorname{LP}(n,m)} \mathcal{M}_\kappa(\alpha, \beta)^{-1} \mathcal{J}_\alpha^{(m,n)}(\boldsymbol{x}), \quad \beta \in \mathrm{LP}(n,m).
\end{equation}

The computational framework developed here also extends to the asymptotic analysis of excited solutions, as discussed in Section~\ref{asymptotic of exicited solutions}.

\section{Coulomb gas solutions integral to null vector equations}
\subsection{Chordal ground and excited solutions}
\label{Classification of screening solutions}
\
\indent
Based on the Coulomb gas integral method introduced in \cite{Z25a, Z25b}, we construct chordal ground and excited solutions that satisfy the null vector equations (also known as BPZ equations), which arise from the insertion of degenerate fields in conformal field theory. These solutions also obey three independent Ward identities that reflect conformal covariance under Möbius transformations.

\begin{equation} \label{null vector equation for Screening solutions}
\left[\frac{\kappa}{4} \partial_j^2+\sum_{k \neq j}^{n}\left(\frac{\partial_k}{x_k-x_j}-\frac{(6-\kappa) /  2\kappa}{\left(x_k-x_j\right)^2}\right)+\frac{\partial_{n+1}}{u-x_j} \right.
\left. -\frac{\lambda_{(b)}(u)}{\left(u-x_j\right)^2}\right] \mathcal{J}\left(\boldsymbol{x},u\right)=0
\end{equation}
for $j=1,2,\ldots,n$,
and the following ward identities: 
\begin{equation} \label{Ward identities for screening solutions}
\begin{aligned}
&\left[\sum_{i=1}^{n} \partial_{x_i}+ \partial_u\right] \mathcal{J}(\boldsymbol{x}) =0,\\
& \left[\sum_{i=1}^{n}\left(x_i \partial_{x_i}+\frac{6-\kappa}{2\kappa}\right)+ u\partial_u+\lambda_{(b)}(u)u\right]\mathcal{J}(\boldsymbol{x})=0, \\
& \left[\sum_{i=1}^{n}\left(x_i^2 \partial_{x_i}+\frac{6-\kappa}{\kappa}x_i\right)+ u^2\partial_u+2\lambda_{(b)}(u)u\right] \mathcal{J}(\boldsymbol{x})=0
\end{aligned}
\end{equation}
where $\lambda_{(b)}(u)$ is the conformal dimensions of $u$.

We need to choose a set of integration contours to integrate $\Phi$. we will explain how we choose integration contours, which lead the screening solutions. We conjecture that these screening solutions span the solution space of the null vector equations (\ref{null vector equation for Screening solutions}) 
and the Ward's identities (\ref{Ward identities for screening solutions}).

To do this, let's begin by defining the link patterns that characterize the topology of integration contours.

\begin{defn}[Chordal link pattern] Given $\boldsymbol{x}=\{x_1,x_2,...,x_n\}$ on the real line, a link pattern is a homotopically equivalent class of non-intersecting curves connecting pair of boundary points (links) or connecting boundary points and the infinity (rays). The link patterns with $n$ boundary points and $m$ links are called $(n,m)$-links, denoted by ${\rm LP}(n,m)$.

The number of chordal $(n,m)$-links is given by $|{\rm LP}(n,m)|=C_{n}^{m+1}-C_{n}^{m}$.
\end{defn}

When all $\sigma_i= a$, $1\leq i \leq n$, we can assign charge $-2a$ or $2(a+b)$ to screening charges.

\begin{itemize}

\item (Chordal ground solutions) \label{chordal ground}
In the upper half plane $\mathbb{H}$, we assign charge $a$ to $x_1,x_2,\ldots,x_n$, charge $-2a$ to $\xi_1,\ldots,\xi_m$ and charge $\sigma_u=2b-(n-2m)a$ to marked points $u$ 
to maintain neutrality condition ($\rm{NC_b}$).
\begin{equation} \label{multiple chordal SLE kappa master function}
\begin{aligned}
\Phi_{\kappa}\left(x_1, \ldots, x_{n}, \xi_1,\xi_2,\ldots,\xi_m, u \right)= & \prod_{i<j}\left(x_i-x_j\right)^{a^2} \prod_{j<k}\left(x_j-\xi_k\right)^{-2a^2}  \prod_{j<k}\left(\xi_j-\xi_k\right)^{4a^2}  \\
& \prod_{j}(x_i-u)^{a(2b-(n-2m)a)}
\prod_{j}(\xi_j-u)^{-2a(2b-(n-2m)a)}
\end{aligned}
\end{equation}

\begin{itemize}
    \item[(1)] $(-2a)\cdot a=-\frac{4}{\kappa}$.  $\xi_i=x_j$ is a singular point of the type $\left(\xi_i-x_j\right)^{-4 / \kappa}$.
     \item[(2)] $(-2a)\cdot (-2a)=\frac{8}{\kappa}$. $\xi_i=\xi_j$ is a singular point of of the type $(\xi_i-\xi_j)^{\frac{8}{\kappa}}$
     \item[(3)] $(-2a)\cdot (2b-(n-2m)a)=\frac{4(n-2m+2)}{\kappa}$.  $\xi_i=u$ is singular point of the type $(\xi_i-u)^{\frac{4(n-2m+2)}{\kappa}}$.
\end{itemize}

In this case, for $m \leq \frac{n+2}{2}$ and a $(n,m)$ chordal link pattern $\alpha$, we can choose $p$ non-intersecting Pochhammer contours $\mathcal{C}_1,\mathcal{C}_2,\ldots,\mathcal{C}_m$ surrounding pairs of points (which correspond to links in a chordal link pattern) to integrate $\Phi_{\kappa}$, we obtain
\begin{equation}
    \mathcal{J}^{(m, n)}_{\alpha}(\boldsymbol{x}):=\oint_{\mathcal{C}_1} \ldots \oint_{\mathcal{C}_m} \Phi_\kappa(\boldsymbol{x}, \boldsymbol{\xi}) d \xi_m \ldots d \xi_1 .
\end{equation}

Note that the charge at $u$ is given by $\sigma_u=2b-(n-2m)a$, thus $$\lambda_{(b)}(u)= \frac{(2m-n)^2}{\kappa}-\frac{2(2m-n)}{\kappa}+\frac{2m-n}{2}$$

The chordal ground solution $\mathcal{J}^{(m,n)}_{\alpha}$ satisfies the null vector equations (\ref{null vector equation for Screening solutions}) and Ward's identities (\ref{Ward identities for screening solutions}) with above $\lambda_{(b)}(u)$.

The number of solutions we can construct via screening is precisely the number of chordal link patterns. We conjecture that these are exactly all the solutions to the null vector equations.

\item (Chordal excited solutions)In the upper half plane $\mathbb{H}$, we assign charge $a$ to $x_1,x_2,\ldots,x_n$, charge $-2a$ to $\xi_1,\ldots,\xi_m$ and charge $2(a+b)$ to $\xi_1,\ldots,\zeta_q$.
Then, we assign charge $\sigma_u=2b-(n-2m)a-2q(a+b)$ to marked points $u$ 
to maintain neutrality condition ($\rm{NC_b}$).
\begin{equation}
\begin{aligned}
&\Phi_{\kappa}\left(x_1, \ldots, x_{n}, \xi_1,\xi_2,\ldots,\xi_m,\zeta_1,\zeta_2,\ldots,\zeta_q, u \right)=  \\
&\prod_{i<j}\left(x_i-x_j\right)^{a^2} \prod_{j<k}\left(x_j-\xi_k\right)^{-2a^2}  \prod_{j<k}\left(\xi_j-\xi_k\right)^{4a^2}  \\
& \prod_{j<k}\left(x_j-\zeta_k\right)^{2a(a+b)}  \prod_{j<k}\left(\zeta_j-\zeta_k\right)^{4(a+b)^2} 
\\
& \prod_{j}(x_i-u)^{a\sigma_u}
\prod_{j}(\xi_j-u)^{-2a\sigma_u}
\prod_{j}(\zeta_j-u)^{2(a+b)\sigma_u}
\end{aligned}
\end{equation}

In the unit disk $\mathbb{H}$, if we set $u=\infty$, then we have

\begin{equation}
\begin{aligned}
&\Phi_{\kappa}\left(x_1, \ldots, x_{n}, \xi_1,\xi_2,\ldots,\xi_m,\zeta_1,\zeta_2,\ldots,\zeta_q\right)=  \\
&\prod_{i<j}\left(x_i-x_j\right)^{a^2} \prod_{j<k}\left(x_j-\xi_k\right)^{-2a^2}  \prod_{j<k}\left(\xi_j-\xi_k\right)^{4a^2}  \\
& \prod_{j<k}\left(x_j-\zeta_k\right)^{2a(a+b)}  \prod_{j<k}\left(\zeta_j-\zeta_k\right)^{4(a+b)^2} 
\end{aligned}
\end{equation}

\begin{itemize}
    \item[(1)] $(-2a)\cdot a=-\frac{4}{\kappa}$.  $\xi_i=z_j$ is a singular point of the type $\left(\xi_i-z_j\right)^{-4 / \kappa}$.
     \item[(2)] $(-2a)\cdot (-2a)=\frac{8}{\kappa}$. $\xi_i=\xi_j$ is a singular point of of the type $(\xi_i-\xi_j)^{\frac{8}{\kappa}}$
     \item[(3)] $(-2a)\cdot (b-\frac{(n-2m)a}{2}-q(a+b))=\frac{2(n-2m+2)}{\kappa}+q$.  $\xi=u$ and $\xi=u^*$ are singular points of the type $(\xi_i-u)^{\frac{2(n-2m+2)}{\kappa}+q}$ and $(\xi_i-u^*)^{\frac{2(n-2m+2)}{\kappa}+q}$
     \item[(4)] $2(a+b)\cdot (2b-(n-2m)a-2q(a+b))=\frac{(1-q)\kappa}{2}-n+2m-2$.  $\xi_i=u$ is a singular point of the type $(\xi_i-u)^{\frac{(1-q)\kappa}{2}-n+2m-2}$.
\end{itemize}

For $q=1$, $\zeta_1=u$ is one singular point of degree $-n+2m-2$.
We have only one choice for screening contours to integrate $\zeta_1$, the circle $C(u,\epsilon)$ around $u$ with radius $\epsilon$, this gives the excited solution.
    
In this case, for $m \leq \frac{n+2}{2}$ and a $(n,m)$ chordal link pattern $\alpha$, we can choose $p$ non-intersecting Pochhammer contours $\mathcal{C}_1,\mathcal{C}_2,\ldots,\mathcal{C}_m$ surrounding pairs of points (which correspond to links in a chordal link pattern) to integrate $\Phi_{\kappa}$, we obtain
\begin{equation}
    \mathcal{K}^{(m,n)}_{\alpha}(\boldsymbol{z}):=\oint_{\mathcal{C}_1} \ldots \oint_{\mathcal{C}_m}\oint_{C(u,\epsilon)} \Phi_\kappa(\boldsymbol{z}, \boldsymbol{\xi}) d \xi_m \ldots d \xi_1 d\zeta_1.
\end{equation}
\end{itemize} 

Note that the charges at $u$ is given by $\sigma_u=(2m-n-2)a$
$$\lambda_{(b)}(u)= \frac{(2m-n)(2m-n-2)}{\kappa}-\frac{2m-n-2}{2}$$ 
The chordal excite solution $\mathcal{K}^{(m,n)}_{\alpha}$ 
satisfies the null vector equations (\ref{null vector equation for Screening solutions}) and Ward's identities (\ref{Ward identities for screening solutions}) with above $\lambda_{(b)}(u)$.

For $q\geq 2$, since $u$ is the only singular points for screening charges,
it is impossible to choose two non-intersecting contours for $\{\zeta_1,\zeta_2,\ldots,\zeta_q \}$.

We conjecture that the set of screening solutions constructed via Coulomb gas integrals indexed by chordal link patterns forms a basis for the space of solutions to the null vector equations and Ward identities in the presence of a marked boundary point.

\subsection{Radial ground and excited solutions}
\
\indent
In \cite{MZ24, Zh25th}, we construct both radial ground and excited solutions, denoted respectively by $\mathcal{J}_\alpha(\boldsymbol{x})$ and $\mathcal{K}_\alpha(\boldsymbol{x})$, using the Coulomb gas formalism.

\begin{equation} 
\left[\frac{\kappa}{4} \partial_j^2+\sum_{k \neq j}^{n}\left(\frac{\partial_k}{z_k-z_j}-\frac{(6-\kappa) /  2\kappa}{\left(z_k-z_j\right)^2}\right)+\frac{\partial_{n+1}}{u-z_j} +\frac{\partial_{n+2}}{u^*-z_j} \right.
\left. -\frac{\lambda_{(b)}(u)}{\left(u-z_j\right)^2}-\frac{\lambda_{(b)}(u^*)}{\left(u^*-z_j\right)^2} \right] \mathcal{J}\left(\boldsymbol{z},u\right)=0
\end{equation}
for $j=1,2,\ldots,n$,
and the following ward identities : 
\begin{equation} 
\begin{aligned}
&\left[\sum_{i=1}^{n} \partial_{x_i}+ \partial_z+\partial_{z^*}\right] \mathcal{J}(\boldsymbol{x},z) =0,\\
& \left[\sum_{i=1}^{n}\left(x_i \partial_{x_i}+\frac{6-\kappa}{2\kappa}\right)+ z\partial_z+\lambda_{(b)}(z)z+z^*\partial_{z^*}+\lambda_{(b)}(z^*)z^*\right]\mathcal{J}(\boldsymbol{x},z)=0, \\
& \left[\sum_{i=1}^{n}\left(x_i^2 \partial_{x_i}+\frac{6-\kappa}{\kappa}x_i\right)+ z^2\partial_z+2\lambda_{(b)}(z)z+(z^*)^2\partial_{z^*}+2\lambda_{(b)}(z^*)z^*\right] \mathcal{J}(\boldsymbol{x},z)=0
\end{aligned}
\end{equation}
where $\lambda_{(b)}(z)$ and $\lambda_{(b)}(z^*)$ are the conformal dimensions of $z$ and $z^*$.
\begin{itemize}

\item (Radial ground solutions) \label{radial ground}
In the upper half plane $\mathbb{H}$, we assign charge $a$ to $x_1,x_2,\ldots,x_n$, charge $-2a$ to $\xi_1,\ldots,\xi_m$ and charge $\sigma_z=\sigma_{z^*}=b-\frac{(n-2m)a}{2}$ to marked points $z$ and $z^*$ 
to maintain neutrality condition ($\rm{NC_b}$).
\begin{equation} \label{multiple radial SLE(kappa) master function in H}
\begin{aligned}
\Phi_{\kappa}\left(x_1, \ldots, x_{n}, \xi_1,\xi_2,\ldots,\xi_m, z \right)= & \prod_{i<j}\left(x_i-x_j\right)^{a^2} \prod_{j<k}\left(x_j-\xi_k\right)^{-2a^2}  \prod_{j<k}\left(\xi_j-\xi_k\right)^{4a^2}  \\
& \prod_{j}(x_i-z)^{a(b-\frac{(n-2m)a}{2})}
\prod_{j}(x_i-z^*)^{a(b-\frac{(n-2m)a}{2})}
\\
&\prod_{j}(\xi_j-z)^{-2a(b-\frac{(n-2m)a}{2})}\prod_{j}(\xi_j-z^*)^{-2a(b-\frac{(n-2m)a}{2})}
\end{aligned}
\end{equation}

\begin{itemize}
    \item[(1)] $(-2a)\cdot a=-\frac{4}{\kappa}$.  $\xi_i=x_j$ is a singular point of the type $\left(\xi_i-x_j\right)^{-4 / \kappa}$.
     \item[(2)] $(-2a)\cdot (-2a)=\frac{8}{\kappa}$. $\xi_i=\xi_j$ is a singular point of of the type $(\xi_i-\xi_j)^{\frac{8}{\kappa}}$
     \item[(3)] $(-2a)\cdot (b-\frac{(n-2m)a}{2})=\frac{2(n-2m+2)}{\kappa}$.  $\xi=z$ and $\xi=z^*$ are singular points of the type $(\xi_i-z)^{\frac{2(n-2m+2)}{\kappa}}$ and $(\xi_i-z^*)^{\frac{2(n-2m+2)}{\kappa}}$
\end{itemize}

In this case, for $m \leq \frac{n+2}{2}$ and a $(n,m)$ radial link pattern $\alpha$, we can choose $p$ non-intersecting Pochhammer contours $\mathcal{C}_1,\mathcal{C}_2,\ldots,\mathcal{C}_m$ surrounding pairs of points (which correspond to links in a radial link pattern) to integrate $\Phi_{\kappa}$, we obtain
\begin{equation}
    \mathcal{J}^{(m, n)}_{\alpha}(\boldsymbol{x}):=\oint_{\mathcal{C}_1} \ldots \oint_{\mathcal{C}_m} \Phi_\kappa(\boldsymbol{x}, \boldsymbol{\xi}) d \xi_m \ldots d \xi_1 .
\end{equation}
In particular, if $m=0$, we call $\Phi_{\kappa}$ the fermionic ground solution.

Note that the charges at $u$ and $u^*$ are given by $\sigma_u=\sigma_{u^*}=b-\frac{(n-2m)a}{2}$, thus $$\lambda_{(b)}(u)=\lambda_{(b)}(u^*)=\frac{(n-2m)^2a^2}{8}-\frac{b^2}{2}= \frac{(n-2m)^2}{4\kappa}-\frac{(\kappa-4)^2}{16\kappa}$$

The radial ground solution $\mathcal{J}^{(m,n)}_{\alpha}$ satisfies the null vector equations (\ref{null vector equation for Screening solutions}) and Ward's identities (\ref{Ward identities for screening solutions}) with above $\lambda_{(b)}(u)$ and $\lambda_{(b)}(u^*)$

\item (Radial excited solutions)In the upper half plane $\mathbb{H}$, we assign charge $a$ to $x_1,x_2,\ldots,x_n$, charge $-2a$ to $\xi_1,\ldots,\xi_m$ and charge $2(a+b)$ to $\zeta_1,\ldots,\zeta_q$.
Then, we assign charge $\sigma_u=\sigma_{u^*}=b-\frac{(n-2m)a+2q(a+b)}{2}$ to marked points $u$ and $u^*$ 
to maintain neutrality condition ($\rm{NC_b}$).
\begin{equation}
\begin{aligned}
&\Phi_{\kappa}\left(x_1, \ldots, x_{n}, \xi_1,\xi_2,\ldots,\xi_m,\zeta_1,\zeta_2,\ldots,\zeta_q, u \right)=  \\
&\prod_{i<j}\left(x_i-x_j\right)^{a^2} \prod_{j<k}\left(x_j-\xi_k\right)^{-2a^2}  \prod_{j<k}\left(\xi_j-\xi_k\right)^{4a^2}  \\
& \prod_{j<k}\left(x_j-\zeta_k\right)^{2a(a+b)}  \prod_{j<k}\left(\zeta_j-\zeta_k\right)^{4(a+b)^2} 
\\
& \prod_{j}(x_i-u)^{a\sigma_u}
\prod_{j}(x_i-u^*)^{a\sigma_{u^*}}
\\
&\prod_{j}(\xi_j-u)^{-2a\sigma_u}\prod_{j}(\xi_j-u^*)^{-2a\sigma_{u^*}}
\\
&\prod_{j}(\zeta_j-u)^{2(a+b)\sigma_u}\prod_{j}(\zeta_j-u^*)^{2(a+b)\sigma_{u^*}}
\end{aligned}
\end{equation}

\begin{itemize}
    \item[(1)] $(-2a)\cdot a=-\frac{4}{\kappa}$.  $\xi_i=x_j$ is a singular point of the type $\left(\xi_i-x_j\right)^{-4 / \kappa}$.
     \item[(2)] $(-2a)\cdot (-2a)=\frac{8}{\kappa}$. $\xi_i=\xi_j$ is a singular point of of the type $(\xi_i-\xi_j)^{\frac{8}{\kappa}}$
     \item[(3)] $(-2a)\cdot (b-\frac{(n-2m)a}{2}-q(a+b))=\frac{2(n-2m+2)}{\kappa}+q$.  $\xi=u$ and $\xi=u^*$ are singular points of the type $(\xi_i-u)^{\frac{2(n-2m+2)}{\kappa}+q}$ and $(\xi_i-u^*)^{\frac{2(n-2m+2)}{\kappa}+q}$
     \item[(4)] $2(a+b)\cdot (b-\frac{(n-2m)a}{2}-q(a+b))=\frac{(1-q)\kappa}{4}+\frac{-n+2m-2}{2}$.  $\xi=u$ and $\xi=u^*$ are singular points of the type $(\xi_i-u)^{\frac{(1-q)\kappa}{4}+\frac{-n+2m-2}{2}}$ and $(\xi_i-u^*)^{\frac{(1-q)\kappa}{4}+\frac{-n+2m-2}{2}}$
\end{itemize}

For $q=1$, $\zeta_1=u$ and $\zeta_1=u^*$ are two singular points of degree $\frac{-n+2m-2}{2}$.
We have two choices for screening contours to integrate $\zeta_1$
\begin{itemize}
    \item $n$ odd, Pochhammer contour $\mathscr{P}(u,u^*)$ surrounding $u$ and $u^*$, however,
    $$\int_{\mathscr{P}(u,u^*)}\Phi_{\kappa} d\zeta = 0$$
    
    \item $n$ even, the circle $C(u,\epsilon)$ around $u$ with radius $\epsilon$, this gives the excited solution
    
In this case, for $m \leq \frac{n+2}{2}$ and a $(n,m)$ radial link pattern $\alpha$, we can choose $p$ non-intersecting Pochhammer contours $\mathcal{C}_1,\mathcal{C}_2,\ldots,\mathcal{C}_m$ surrounding pairs of points (which correspond to links in a radial link pattern) to integrate $\Phi_{\kappa}$, we obtain
\begin{equation}
    \mathcal{K}^{(m,n)}_{\alpha}(\boldsymbol{x}):=\oint_{\mathcal{C}_1} \ldots \oint_{\mathcal{C}_m}\oint_{C(u,\epsilon)} \Phi_\kappa(\boldsymbol{x}, \boldsymbol{\xi}) d \xi_m \ldots d \xi_1 d\zeta_1.
\end{equation}
In particular, if $p=0$, we call $\Phi_{\kappa}$ the fermionic excited solution.
\end{itemize} 

Note that the charges at $z$ and $z^*$ are given by $\sigma_z=\sigma_{z^*}=\frac{(2m-n-2)a}{2}$
$$\lambda_{(b)}(z)=\lambda_{(b)}(z^*)=\frac{(n-2m+\frac{\kappa}{2})^2}{4\kappa}-\frac{(\kappa-4)^2}{16\kappa}$$ 
The radial excited solution $\mathcal{K}^{(m,n)}_{\alpha}$ 
satisfies the null vector equations (\ref{null vector equation for Screening solutions}) and Ward's identities (\ref{Ward identities for screening solutions}) with above $\lambda_{(b)}(u)$ and $\lambda_{(b)}(u^*)$

For $q\geq 2$, since $u$ and $u^*$ are the only singular points for screening charges,
it is impossible to choose two non-intersecting contours for $\{\zeta_1,\zeta_2,\ldots,\zeta_q \}$.

\end{itemize}

\section{Asymptotics of chordal partition functions as interval collapse}

In this section, we study the asymptotic behavior of the Coulomb gas integral $\mathcal{J}^{(m,n)}_\alpha(\boldsymbol{x},u)$
\begin{equation}
\begin{aligned}
\mathcal{J}^{(m,n)}_\alpha(\boldsymbol{x},u) =&\left[\frac{n(\kappa) \Gamma(2-8 / \kappa)}{4 \sin ^2(4 \pi / \kappa) \Gamma(1-4 / \kappa)^2}\right]^m \\
&\left(\prod_{i<j}^{n}\left(x_j-x_i\right)^{2 / \kappa}\right) \left(\prod_{i=1}^{n}\left|u-x_i\right|^{(\kappa-2n+4m-4) / \kappa}\right) \\
&\oint_{\Gamma_1} \ldots \oint_{\Gamma_{m}} d u_1 \ldots d u_{m}\left(\prod_{k=1}^{n} \prod_{l=1}^{m}\left(x_k-u_l\right)^{-4 / \kappa}\right)\left(\prod_{p<q}^{m}\left(u_p-u_q\right)^{8 / \kappa}\right) \\
&
\times\left(\prod_{k=1}^{m}\left(u-u_k\right)^{2(2n-4m+4-\kappa) / \kappa}\right)
\end{aligned}
\end{equation}

When $u$ is fixed at $\infty$, we can simplify the formula to:
\begin{equation}
\begin{aligned}
\mathcal{J}^{(m,n)}_\alpha(\boldsymbol{x}) =&\left[\frac{n(\kappa) \Gamma(2-8 / \kappa)}{4 \sin ^2(4 \pi / \kappa) \Gamma(1-4 / \kappa)^2}\right]^m \left(\prod_{i<j}^{n}\left(x_j-x_i\right)^{2 / \kappa}\right) \\
& \oint_{\Gamma_1} \ldots \oint_{\Gamma_{m}} d u_1 \ldots d u_{m}\left(\prod_{k=1}^{n} \prod_{l=1}^{m}\left(x_k-u_l\right)^{-4 / \kappa}\right)\left(\prod_{p<q}^{m}\left(u_p-u_q\right)^{8 / \kappa}\right) 
\end{aligned}
\end{equation}
\begin{thm}\label{Asymptotic analysis of Coulomb gas correlation}
\indent
\begin{itemize}
 \item[Configuration 1] Neither $x_i$ nor $x_{i+1}$ are endpoints of an integration contour of $\mathcal{J}^{(m,n)}_\alpha(\boldsymbol{x})$.

\begin{equation}
   \lim_{x_i,x_{i+1}\rightarrow p} (x_{i+1}-x_i)^{\frac{6}{\kappa}-1}\mathcal{J}^{(m,n)}_\alpha(\boldsymbol{x}) = 0   
\end{equation}

\item[Configuration 2] Both $x_i$ and $x_{i+1}$ are endpoints of a single, common integration contour $\Gamma_1$ of $\alpha$. Hence, this contour is $\Gamma_1=\mathscr{P}\left(x_i, x_{i+1}\right)$ or $\left[x_i, x_{i+1}\right]^{+}$. (The superscript + indicates that we form the contour $\left[x_i, x_{i+1}\right]^{+}$by slightly bending $\left[x_i, x_{i+1}\right]$ into the upper half-plane, keeping the endpoints fixed.)

\begin{equation}
   \lim_{x_i,x_{i+1}\rightarrow p} (x_{i+1}-x_i)^{\frac{6}{\kappa}-1}\mathcal{J}^{(m,n)}_\alpha(\boldsymbol{x}) = n(\kappa) \mathcal{J}^{(m,n)}_{\hat{\alpha}} (\boldsymbol{\hat{x}} )
\end{equation}

\item[Configuration 3] Either $x_i$ or $x_{i+1}$ is an endpoint of a single integration contour $\Gamma_1$ of $\mathcal{J}^{(m,n)}_\alpha(\boldsymbol{x})$, but the other is not an endpoint of any contour. 

We assume that $i>1, \kappa>4, x_i$ is an endpoint of $\Gamma_1$, and $\Gamma_1$ does not pass over the interval $\left(x_i, x_{i+1}\right)$.

With $\kappa>4$, $\Gamma_1$ is a simple contour, and we decompose it into one simple contour $\Gamma_1^{\prime}$ with its right endpoint at $x_{i-1}$ and another $\Gamma_1^{\prime \prime}$ with its endpoints at $x_i$ and $x_{i-1}$. (We might have $\Gamma_1^{\prime}=\emptyset$ and $\Gamma_1=\Gamma_1^{\prime \prime}$.) Also, the limit breaks into
\begin{equation} 
\begin{aligned}
\lim _{x_{i+1} \rightarrow x_i}\left(x_{i+1}-x_i\right)^{6 / \kappa-1} \mathcal{J}^{(m,n)}_\alpha= & \lim _{x_{i+1} \rightarrow x_i}\left(x_{i+1}-x_i\right)^{6 / \kappa-1}\left(\left.\mathcal{J}^{(m,n)}_\alpha\right|_{\Gamma_1 \mapsto \Gamma_1^{\prime}}\right) \\
& +\lim _{x_{i+1} \rightarrow x_i}\left(x_{i+1}-x_i\right)^{6 / \kappa-1}\left(\left.\mathcal{J}^{(m,n)}_\alpha\right|_{\Gamma_1 \mapsto \Gamma_1^{\prime \prime}}\right) .
\end{aligned}
\end{equation}
The first limit on the right side of \ref{config3} falls under case 1 and, therefore, vanishes. Meanwhile, the second limit on the right side of \ref{config3}  still falls under case 3. 

\begin{equation}
   \lim_{x_i,x_{i+1}\rightarrow p} (x_{i+1}-x_i)^{\frac{6}{\kappa}-1}\mathcal{J}^{(m,n)}_\alpha(\boldsymbol{x}) = \mathcal{J}^{(m-1,n)}_{\hat{\alpha}} (\boldsymbol{\hat{x}} )
\end{equation}
   
\item[Configuration 4]$x_i$ and $x_{x+1}$ are endpoints of one contour $\Gamma_1$,  we collapse $x_i$ and $x_{i+1}$ in the complement arc.
\begin{equation} \label{config4}
\begin{aligned}
\lim _{x_{i+1} \rightarrow x_i}\left(x_{i+1}-x_i\right)^{6 / \kappa-1} \mathcal{J}^{(m,n)}_\alpha= & \lim _{x_{i+1} \rightarrow x_i}\left(x_{i+1}-x_i\right)^{6 / \kappa-1}\left(\left.\mathcal{J}^{(m,n)}_\alpha\right|_{\Gamma_1 \mapsto \Gamma_1^{\prime}}\right) \\
& +\lim _{x_{i+1} \rightarrow x_i}\left(x_{i+1}-x_i\right)^{6 / \kappa-1}\left(\left.\mathcal{J}^{(m,n)}_\alpha\right|_{\Gamma_1 \mapsto \Gamma_1^{\prime \prime}}\right) 
\\
& +\lim _{x_{i+1} \rightarrow x_i}\left(x_{i+1}-x_i\right)^{6 / \kappa-1}\left(\left.\mathcal{J}^{(m,n)}_\alpha\right|_{\Gamma_1 \mapsto \Gamma_1^{\prime \prime\prime}}\right) .
\end{aligned}
\end{equation}

\begin{equation}
   \lim_{x_i,x_{i+1}\rightarrow p} (x_{i+1}-x_i)^{\frac{6}{\kappa}-1}\mathcal{J}^{(m,n)}_\alpha(\boldsymbol{x}) =2 \mathcal{J}^{(m-1,n)}_{\hat{\alpha}} (\boldsymbol{\hat{x}} )
\end{equation}

\item[Configuration 5]  $x_i$ is an endpoint of one contour $\Gamma_1$ , and $x_{i+1}$ is an endpoint of a different contour $\Gamma_2$.

Calculation: With $\kappa>4$ and $\Gamma_1$ and $\Gamma_2$ simple contours, we decompose $\Gamma_1$ (resp. $\Gamma_2$ ) into one simple contour $\Gamma_1^{\prime}$ (resp. $\Gamma_2^{\prime}$ ) with an endpoint at $x_{i-1}$ (resp. $x_{i+2}$ ) and another $\Gamma_1^{\prime \prime}$ (resp. $\Gamma_2^{\prime \prime}$ ) with its endpoints at $x_{i-1}$ and $x_i$ (resp. $x_{i+1}$ and $x_{i+2}$ ). In some cases, we might have $\Gamma_1^{\prime}=\emptyset$ and $\Gamma_1^{\prime \prime}=\Gamma_1$ (resp. $\Gamma_2^{\prime}=\emptyset$ and $\Gamma_2^{\prime \prime}=\Gamma_2$ ). This decomposition contains within it three sub-cases: neither $\Gamma_1$ nor $\Gamma_2$ passes over $\left(x_i, x_{i+1}\right)$, only $\Gamma_2$ passes over $\left(x_i, x_{i+1}\right)$, or only $\Gamma_1$ passes over $\left(x_i, x_{i+1}\right)$. Similarly, the limit decomposes into
\begin{equation} \label{config5}
\begin{aligned}
\lim _{x_{i+1} \rightarrow x_i}\left(x_{i+1}-x_i\right)^{6 / \kappa-1} \mathcal{J}^{(m,n)}_\alpha & =\lim _{x_{i+1} \rightarrow x_i}\left(x_{i+1}-x_i\right)^{6 / \kappa-1}\left(\left.\mathcal{J}^{(m,n)}_\alpha\right|_{\left(\Gamma_1, \Gamma_2\right) \mapsto\left(\Gamma_1^{\prime}, \Gamma_2^{\prime}\right)}\right) \\
& +\lim _{x_{i+1} \rightarrow x_i}\left(x_{i+1}-x_i\right)^{6 / \kappa-1}\left(\left.\mathcal{J}^{(m,n)}_\alpha\right|_{\left(\Gamma_1, \Gamma_2\right) \mapsto\left(\Gamma_1^{\prime \prime}, \Gamma_2^{\prime}\right)}\right)\\
& +\lim _{x_{i+1} \rightarrow x_i}\left(x_{i+1}-x_i\right)^{6 / \kappa-1}\left(\left.\mathcal{J}^{(m,n)}_\alpha\right|_{\left(\Gamma_1, \Gamma_2\right) \mapsto\left(\Gamma_1^{\prime}, \Gamma_2^{\prime \prime}\right)}\right)\\
& +\lim _{x_{i+1} \rightarrow x_i}\left(x_{i+1}-x_i\right)^{6 / \kappa-1}\left(\left.\mathcal{J}^{(m,n)}_\alpha\right|_{\left(\Gamma_1, \Gamma_2\right) \mapsto\left(\Gamma_1^{\prime \prime}, \Gamma_2^{\prime \prime}\right)}\right)
\end{aligned}
\end{equation}

The first limit on the right side of (\ref{config5}) falls under case 1 and therefore vanishes. The second (resp. third) limit on the right side of \ref{config5} falls under case 3 and therefore equals the element of $\mathcal{B}_{n-2,m-1}$ with contours $\Gamma_2^{\prime}$, (resp. $\left.\Gamma_1^{\prime}\right) \Gamma_3, \ldots, \Gamma_{m}$. Finally, the fourth limit on the right side of (\ref{config5}) still falls under case 4, and we compute it later.
There, we deform $\Gamma_1^{\prime \prime}$ and $\Gamma_2^{\prime \prime}$ in a way that generates terms only falling under cases 1 and 2. Only the latter type of term has a non-vanishing limit as $x_{i+1} \rightarrow x_i$, and a factor of $n(\kappa)^{-1}$ accompanies it. Thus, the last limit on the right side of (\ref{config5}) is the element of $\mathcal{B}_{n-2,m-1}$ with contours $\Gamma_0^{\prime}:=\left[x_{i-1}, x_{i+2}\right]^{+}, \Gamma_3, \Gamma_4, \ldots, \Gamma_{m}$. After summing all four terms, we find that the right side of (\ref{config5}) equals the element of $\mathcal{B}_{n-2,m-1}$ with contours $\Gamma:=\Gamma_0^{\prime}+\Gamma_1^{\prime}+\Gamma_2^{\prime}$, and $\Gamma_3, \Gamma_4, \ldots, \Gamma_{m}$.

\begin{equation}
   \lim_{x_i,x_{i+1}\rightarrow p} (x_{i+1}-x_i)^{\frac{6}{\kappa}-1}\mathcal{J}^{(m,n)}_\alpha(\boldsymbol{x}) =  \mathcal{J}^{(m-1,n-2)}_{\hat{\alpha}} (\boldsymbol{\hat{x}} )
\end{equation}

\end{itemize}
\end{thm}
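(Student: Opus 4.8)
The plan is to follow the Coulomb-gas asymptotic analysis of \cite{FK15c}: isolate the dependence of $\mathcal{J}^{(m,n)}_\alpha$ on $x_{i+1}-x_i$ into an explicit power coming from the Vandermonde-type prefactor together with finitely many local beta-function and Pochhammer-contour integrals carried by those screening contours whose endpoints approach the limit point $p$, and then read off each case by matching exponents. Concretely, write $\mathcal{J}^{(m,n)}_\alpha(\boldsymbol x)=C_\kappa^{\,m}\bigl(\prod_{r<s}(x_s-x_r)^{2/\kappa}\bigr)\,I_\alpha(\boldsymbol x)$, where $C_\kappa=\frac{n(\kappa)\,\Gamma(2-8/\kappa)}{4\sin^2(4\pi/\kappa)\,\Gamma(1-4/\kappa)^2}$ and $I_\alpha$ is the $m$-fold screening integral $\oint_{\Gamma_1}\cdots\oint_{\Gamma_m}\prod_{k,l}(x_k-u_l)^{-4/\kappa}\prod_{l<l'}(u_l-u_{l'})^{8/\kappa}\,d\boldsymbol{u}$. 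Factoring $\prod_{r<s}(x_s-x_r)^{2/\kappa}=(x_{i+1}-x_i)^{2/\kappa}\prod_{(r,s)\ne(i,i+1)}(x_s-x_r)^{2/\kappa}$, the second product converges to the Vandermonde prefactor of the collapsed configuration, so the whole problem reduces to the leading behaviour of $I_\alpha$ as $x_{i+1}\to x_i$, which is controlled entirely by how the contours $\Gamma_1,\dots,\Gamma_m$ sit relative to the shrinking interval $(x_i,x_{i+1})$.

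\textbf{Configurations 1--4.} If neither $x_i$ nor $x_{i+1}$ is an endpoint of any $\Gamma_l$ then, since $x_i$ and $x_{i+1}$ are adjacent, no contour can separate them after a harmless homotopy; hence no screening variable is pinched, $I_\alpha$ is bounded and continuous at the collapse, and the surviving power is $(x_{i+1}-x_i)^{2/\kappa+6/\kappa-1}=(x_{i+1}-x_i)^{8/\kappa-1}\to0$ since $\kappa<8$. This is Configuration 1. In the remaining cases I decompose each contour with an endpoint at $x_i$ or $x_{i+1}$ into a piece supported away from $p$ --- which falls back to Configuration 1 and contributes nothing --- and a piece supported in a shrinking neighbourhood of $p$; this is exactly the decomposition used in Configurations 3--5 of the statement. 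On such a near-piece I substitute $u_l=x_i+(x_{i+1}-x_i)s_l$: the pinched factors $(x_i-u_l)^{-4/\kappa}(x_{i+1}-u_l)^{-4/\kappa}$ and the Jacobian contribute $(x_{i+1}-x_i)^{1-8/\kappa}$, while the residual $s$-integral is an explicit beta- or Pochhammer-contour integral evaluated by the classical formulas. The constant $C_\kappa^{\,m}$ is normalized precisely so that a \emph{full} Pochhammer loop around the collapsing pair, i.e.\ $\oint_{\mathscr{P}(0,1)}s^{-4/\kappa}(1-s)^{-4/\kappa}\,ds$ --- the screening charge being consumed into a contractible loop --- contributes the loop weight $n(\kappa)$ (Configuration 2); a single arc retracting onto one of the two points contributes $1$ (Configuration 3); and the two such arcs produced when $\Gamma_1$ joins $x_i$ to $x_{i+1}$ the long way round contribute $1+1=2$ (Configuration 4, via (\ref{config4})). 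In every case the exponent bookkeeping $\tfrac{2}{\kappa}+(1-\tfrac{8}{\kappa})+(\tfrac{6}{\kappa}-1)=0$ forces a finite limit, and --- once the factors left by the collapsed variables cancel against the degenerating part of the prefactor --- the surviving integral over the untouched contours reorganizes into the reduced Coulomb-gas integral $\mathcal{J}_{\hat\alpha}(\hat{\boldsymbol x})$ for the link pattern $\hat\alpha$ and point set $\hat{\boldsymbol x}$ produced by the collapse, with the screening and boundary-point counts shifted as recorded in the statement.

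\textbf{Configuration 5 --- the main obstacle.} When $x_i$ and $x_{i+1}$ are endpoints of two \emph{distinct} contours $\Gamma_1,\Gamma_2$, the four-term expansion (\ref{config5}) kills the first term (Case 1) and reduces the second and third to one-sided pinches (Case 3, contributing the reduced integral with contours $\Gamma_2'$ and $\Gamma_1'$ respectively), but the fourth term --- carrying both small near-pieces $\Gamma_1''$ and $\Gamma_2''$ --- is again of Configuration-4 type, and this is the step I expect to demand the most care. One resolves it by a further deformation of $\Gamma_1''\cup\Gamma_2''$ into contours each of which is of Case-1 type (vanishing) or Case-2 type (surviving, and carrying a factor $n(\kappa)^{-1}$ because a loop is being undone in the deformation); the net effect is to fuse the two small arcs into the single arc $\Gamma_0'=[x_{i-1},x_{i+2}]^{+}$, the weight $n(\kappa)^{-1}$ cancelling against the $n(\kappa)$ produced by the accompanying Case-2 pinch, so that the fourth term contributes the reduced integral with contour $\Gamma_0'$ and coefficient $1$. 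Summing, the right-hand side of (\ref{config5}) is the reduced Coulomb-gas integral with contour system $\Gamma_0'+\Gamma_1'+\Gamma_2'$, which is precisely $\mathcal{J}^{(m-1,n-2)}_{\hat\alpha}(\hat{\boldsymbol x})$. Making all of this rigorous comes down to (i) tracking the branch of the multivalued integrand along each deformed Pochhammer loop and checking that the orientation-dependent phases in the classical contour-integral formulas combine to the stated real constants, and (ii) the combinatorial verification that $\Gamma_0'+\Gamma_1'+\Gamma_2'$ is the contour system indexed by $\hat\alpha$. I would run the argument under the running hypothesis $\kappa>4$ adopted in the statement of Configurations 3--5 (where the contours are simple), and, if one wants the identities for all $\kappa\in(0,8)$, extend them by analytic continuation in $\kappa$, both sides being analytic there.
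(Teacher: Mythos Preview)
Your plan matches the paper's in spirit and in the easy cases. Cases 1 and 2 are handled exactly as you describe: pull out $(x_{i+1}-x_i)^{2/\kappa}$ from the Vandermonde prefactor, note boundedness of the screening integral in Case 1, and in Case 2 rescale $u_1=(1-t)x_i+tx_{i+1}$ to land on an Euler (or Pochhammer) beta integral whose value against the normalization $C_\kappa$ produces $n(\kappa)$.

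For Case 3 your route and the paper's diverge. The paper does \emph{not} extract the leading term by a local rescaling on a shrinking neighbourhood of $p$: it keeps the decomposition at the fixed point $x_{i-1}$ (so $\Gamma_1''=[x_{i-1},x_i]$ has fixed length), sets $I_k:=\int_{x_k}^{x_{k+1}}\mathcal N\bigl[\prod_j(u-x_j)^{\beta_j}\bigr]\,du$, and closes the $u_1$-contour on a large semicircle in the upper and lower half-planes to obtain the global phase identity
\[
\sum_{k=1}^{K} e^{\pm\pi i\sum_{l\le k}\beta_l}\,I_k=0.
\]
Solving this pair for $I_{i-1}$ expresses it as a linear combination of the other $I_k$; all of these are bounded as $x_{i+1}\to x_i$ except $I_i$, which is of Case-2 type and diverges like $(x_{i+1}-x_i)^{1-8/\kappa}$, and its coefficient $-\sin(\pi\beta_{i+1})/\sin(\pi(\beta_i+\beta_{i+1}))$ is exactly what combines with the Case-2 beta value and $C_\kappa$ to give the prefactor $1$. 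Your local-substitution route, which after rescaling leads to $\int_0^\infty t^{-4/\kappa}(1+t)^{-4/\kappa}\,dt=B(1-\tfrac4\kappa,\tfrac8\kappa-1)$, would in fact reproduce the same number via the reflection formula, so it is a legitimate alternative; but you should be aware that it is not the argument the paper runs, and that your description of the contributing piece as ``supported in a shrinking neighbourhood'' does not match the paper's fixed-length $\Gamma_1''$.

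The place where your proposal is thinnest is the fourth term of Configuration 5. There you essentially restate the theorem's own narrative (``deform $\Gamma_1''\cup\Gamma_2''$ into Case-1 and Case-2 pieces, the Case-2 piece carries $n(\kappa)^{-1}$, the result is the arc $\Gamma_0'=[x_{i-1},x_{i+2}]^+$''). The paper's actual computation is again the semicircle/phase identity, now applied to the inner $u_2$-integral with $u_1\in[x_{i-1},x_i]$ held fixed: one obtains a linear relation among the double integrals $I_{i-1,k}$ that isolates $I_{i-1,i+1}$. The key algebraic point is that the coefficient of the dangerous term $I_{i-1,i}$ is $\sin\pi(\beta_i+\gamma/2)/\sin\pi(\beta_i+\beta_{i+1}+\gamma/2)$, which \emph{vanishes} because $\beta_i+\gamma/2=-4/\kappa+4/\kappa=0$. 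The surviving $I_{i-1,k}$ with $k\notin\{i-1,i,i+1\}$ are then each reduced to Case 3, and their sum is shown to reassemble into the single integral over $\Gamma_0'$. If you want your write-up to be a proof rather than an outline, this cancellation and the subsequent reassembly are the steps you need to carry out explicitly; the ``further deformation'' heuristic by itself does not pin down either the constant or the resulting contour.
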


\subsection{Case 1: Neither \texorpdfstring{$x_i$}{xi} nor \texorpdfstring{$x_{i+1}$}{xi+1} are endpoints of contours}

\begin{figure}[h]
\centering
\includegraphics[width=15cm]{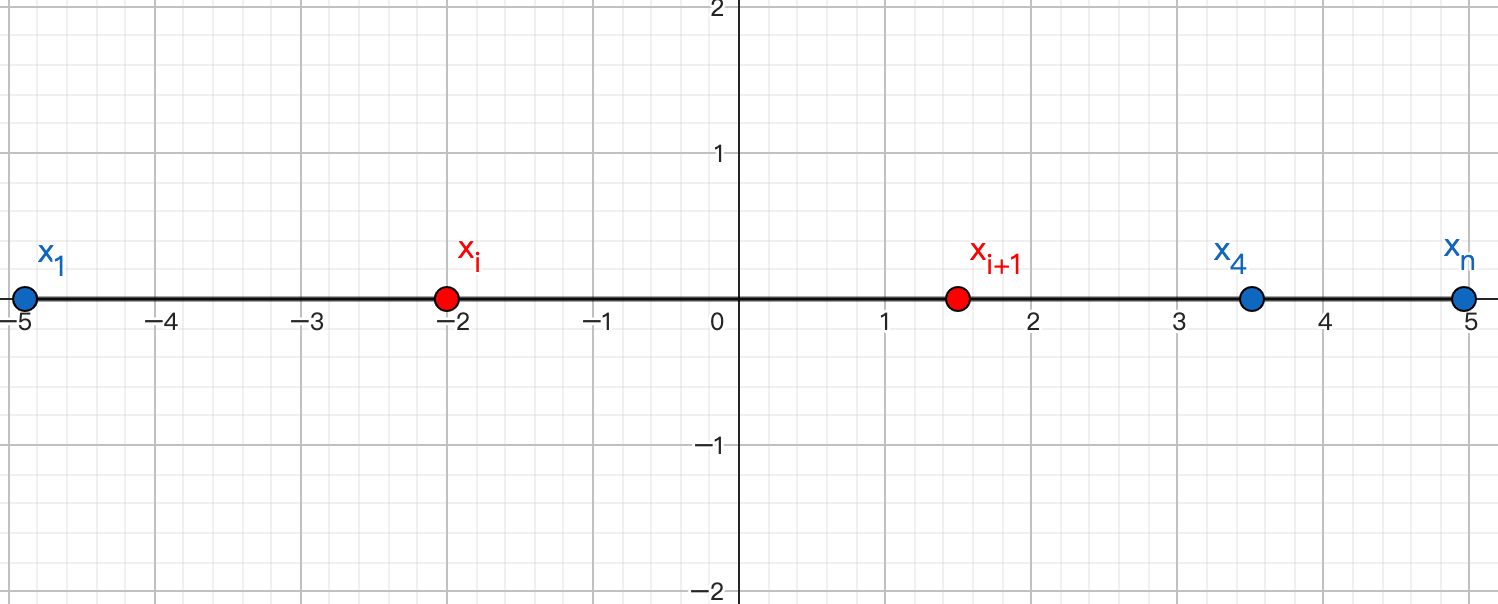}
\caption{Case 1}
\end{figure}
Suppose neither $x_i$ nor $x_{i+1}$ are endpoints of any integration contour in the Coulomb gas integral $\mathcal{J}^{(m,n)}_\alpha(\boldsymbol{x})$. Then, as $x_{i+1} \to x_i$, the renormalized Coulomb gas integral vanishes:
\[
\lim_{x_i,x_{i+1}\rightarrow p} (x_{i+1}-x_i)^{\frac{6}{\kappa}-1}\mathcal{J}^{(m,n)}_\alpha(\boldsymbol{x}) = 0.
\]

\begin{proof}
Since $x_i$ and $x_{i+1}$ are not endpoints of any integration contour, they appear only in the integrand of $\mathcal{J}^{(m,n)}_\alpha(\boldsymbol{x})$ as external marked points. The Coulomb gas integral then has the explicit representation:
\[
\begin{aligned}
(x_{i+1}-x_i)^{\frac{6}{\kappa}-1}\mathcal{J}^{(m,n)}_\alpha(\boldsymbol{x}) 
&= (x_{i+1}-x_i)^{\frac{8}{\kappa}-1} \left[\frac{n(\kappa) \Gamma(2-8 / \kappa)}{4 \sin ^2(4 \pi / \kappa) \Gamma(1-4 / \kappa)^2}\right]^m \\
&\quad \times \left( \prod_{\substack{i<j\\ j \neq i+1}}^{n}(x_j - x_i)^{2/\kappa} \right) \oint_{\Gamma_1} \cdots \oint_{\Gamma_m} \mathrm{d}u_1 \cdots \mathrm{d}u_m \\
&\quad \times \left( \prod_{k=1}^n \prod_{l=1}^m (x_k - u_l)^{-4/\kappa} \right) \left( \prod_{1 \leq p < q \leq m} (u_p - u_q)^{8/\kappa} \right).
\end{aligned}
\]

Note that the integrands remain uniformly bounded in $u_1, \dots, u_m$ as $x_{i+1} \to x_i$, since all contours $\Gamma_k$ are fixed and do not depend on $x_i$ or $x_{i+1}$, and none of them contain $x_i$ or $x_{i+1}$ as endpoints.

The only vanishing contribution comes from the prefactor $(x_{i+1} - x_i)^{\frac{8}{\kappa} - 1}$. For $\kappa \in (0,8)$, we have:
\[
\frac{8}{\kappa} - 1 > 0 \quad \Longrightarrow \quad \lim_{x_{i+1} \to x_i} (x_{i+1} - x_i)^{\frac{8}{\kappa} - 1} = 0.
\]
Thus, even though the integral remains finite, the overall expression tends to zero:
\[
\lim_{x_i,x_{i+1} \to p} (x_{i+1} - x_i)^{\frac{6}{\kappa} - 1} \mathcal{J}^{(m,n)}_\alpha(\boldsymbol{x}) = 0.
\]
\end{proof}

\subsection{Case 2: $x_i$ and $x_{i+1}$ as endpoints of a single contour}
\begin{figure}[h]
\centering
\includegraphics[width=15cm]{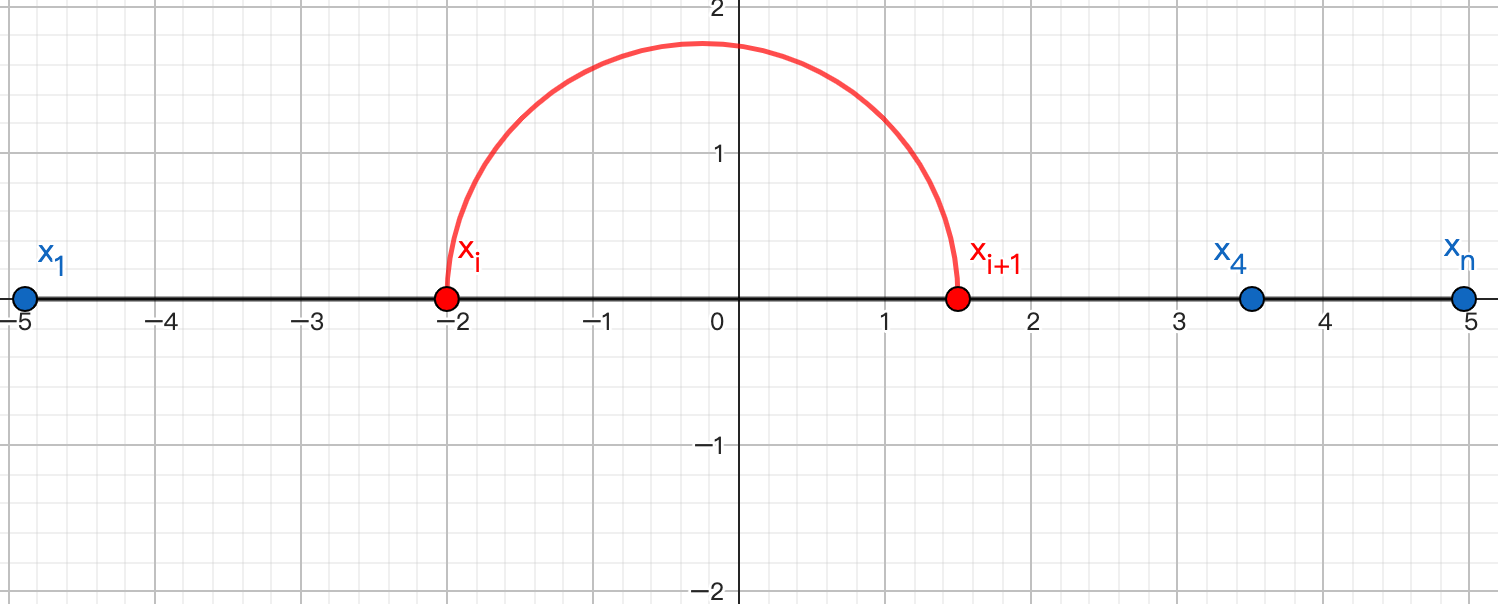}
\caption{Case 2}
\end{figure}
Suppose $x_i$ and $x_{i+1}$ are the endpoints of a single integration contour $\Gamma_1$ of the Coulomb gas integral $\mathcal{J}^{(m,n)}(\boldsymbol{x})$. In this case, the form of $\Gamma_1$ depends on the value of $\kappa$:
\[
\Gamma_1 = 
\begin{cases}
\mathscr{P}(x_i, x_{i+1}) & \text{if } 0 < \kappa \leq 4, \\
[x_i, x_{i+1}]^{+} & \text{if } 4 < \kappa < 8,
\end{cases}
\]
where $\mathscr{P}(x_i, x_{i+1})$ denotes a Pochhammer contour and $[x_i, x_{i+1}]^{+}$ is the positively oriented real interval.

We determine the asymptotic behavior of $\mathcal{J}^{(m,n)}(\boldsymbol{x})$ as $x_{i+1} \to x_i$ by analyzing the innermost integration (with respect to $u_1$) in the ordered Coulomb gas integral:
\begin{itemize}\label{steps}
    \item[1.] Use Fubini's theorem to integrate variables in order $u_1, u_2, \dots, u_m$.
    \item[2.] When $m > 1$, the limit $\lim_{x_{i+1} \to x_i} (x_{i+1} - u_m)^{-4/\kappa}$ is uniform in $u_m \in \Gamma_m$. We may thus replace $x_{i+1}$ with $x_i$ in such terms.
    \item[3.] It suffices to study the behavior of the $u_1$ integral:
    \[
    \int_{\Gamma_1} \mathcal{N}\left[\prod_{j=1}^{K}(u_1 - x_j)^{\beta_j} \right] \, \mathrm{d}u_1,
    \]
    with $K = m + n - 1$ and $\beta_j \in \left\{ -\tfrac{4}{\kappa}, \tfrac{8}{\kappa}, \tfrac{2(2n - 4m + 4 - \kappa)}{\kappa} \right\}$, where $\beta_i = \beta_{i+1} = -\tfrac{4}{\kappa}$.
    \item[4.] Push all integration contours close to the real axis, as allowed since $\gamma = 8/\kappa > 0$.
    \item[5.] Fix all other variables $u_2, \dots, u_m$ and $x_1, \dots, x_n$ as real numbers not in the interval $(x_i, x_{i+1})$.
    \item[6.] Reorder the combined set $\{x_j\}_{j=1}^n \cup \{u_j\}_{j=2}^m$ as $x_1 < x_2 < \dots < x_K$.
\end{itemize}

This yields the general integral form:
\[
\mathcal{J}^{(1,K)}(\{\beta_j\} \mid \Gamma_1 \mid x_1, \dots, x_K) 
= \int_{\Gamma_1} \mathcal{N}\left[\prod_{j=1}^K (u_1 - x_j)^{\beta_j} \right] \, \mathrm{d}u_1.
\]

We now distinguish the two regimes of $\kappa$:

\begin{itemize}
\item [Case a]: $4 < \kappa < 8$ (real contour)

Let $\Gamma_1 = [x_i, x_{i+1}]^+$. Set \( u_1(t) = (1 - t)x_i + t x_{i+1} \). The integral becomes:
\[
\mathcal{J}^{(1,K)}(\{\beta_j\} \mid [x_i, x_{i+1}]^+) 
\sim (x_{i+1} - x_i)^{\beta_i + \beta_{i+1} + 1}
\mathcal{N}\left[ \prod_{j \ne i, i+1}(x_i - x_j)^{\beta_j} \right]
\int_0^1 t^{\beta_i}(1 - t)^{\beta_{i+1}} \, \mathrm{d}t.
\]

The integral evaluates to the Euler beta function:
\[
\int_0^1 t^{\beta_i}(1 - t)^{\beta_{i+1}} \, \mathrm{d}t 
= \frac{\Gamma(\beta_i + 1)\Gamma(\beta_{i+1} + 1)}{\Gamma(\beta_i + \beta_{i+1} + 2)},
\]
so the full asymptotic is:
\[
\mathcal{J}^{(1,K)} \sim \frac{\Gamma(\beta_i + 1)\Gamma(\beta_{i+1} + 1)}{\Gamma(\beta_i + \beta_{i+1} + 2)}
(x_{i+1} - x_i)^{\beta_i + \beta_{i+1} + 1}
\mathcal{N}\left[ \prod_{j \ne i, i+1}(x_i - x_j)^{\beta_j} \right].
\]

\item[Case b] $0 < \kappa \leq 4$ 

Here the real integral diverges due to $\beta_i, \beta_{i+1} \leq -1$, so we use the Pochhammer contour $\mathscr{P}(x_i, x_{i+1})$. After the same substitution, the integral becomes:
\[
\int_{\mathscr{P}(x_i, x_{i+1})} \mathcal{N}\left[ \prod_{j=1}^K (u_1 - x_j)^{\beta_j} \right] \, \mathrm{d}u_1 
\sim (x_{i+1} - x_i)^{\beta_i + \beta_{i+1} + 1} 
\mathcal{N}\left[ \prod_{j \ne i, i+1}(x_i - x_j)^{\beta_j} \right]
\oint_{\beta(0,1)} t^{\beta_i}(1 - t)^{\beta_{i+1}} \, \mathrm{d}t.
\]

Using analytic continuation of the beta function:
\[
\oint_{\beta(0,1)} t^{\beta_i}(1 - t)^{\beta_{i+1}} \mathrm{d}t = 
4 e^{\pi i(\beta_i - \beta_{i+1})} \sin(\pi \beta_i) \sin(\pi \beta_{i+1}) 
\cdot \frac{\Gamma(\beta_i + 1)\Gamma(\beta_{i+1} + 1)}{\Gamma(\beta_i + \beta_{i+1} + 2)}.
\]

Thus, the asymptotic expansion becomes:
\[
\mathcal{J}^{(1,K)} \sim 4 e^{\pi i(\beta_i - \beta_{i+1})} \sin(\pi \beta_i) \sin(\pi \beta_{i+1})
\cdot \frac{\Gamma(\beta_i + 1)\Gamma(\beta_{i+1} + 1)}{\Gamma(\beta_i + \beta_{i+1} + 2)} 
(x_{i+1} - x_i)^{\beta_i + \beta_{i+1} + 1}
\mathcal{N}\left[ \prod_{j \ne i, i+1}(x_i - x_j)^{\beta_j} \right].
\]
\end{itemize}

This concludes the analysis for Case 2.

\subsection{Case 3: $x_i$ as contour endpoint, $x_{i+1}$ not}

\begin{figure}[h]
\centering
\includegraphics[width=15cm]{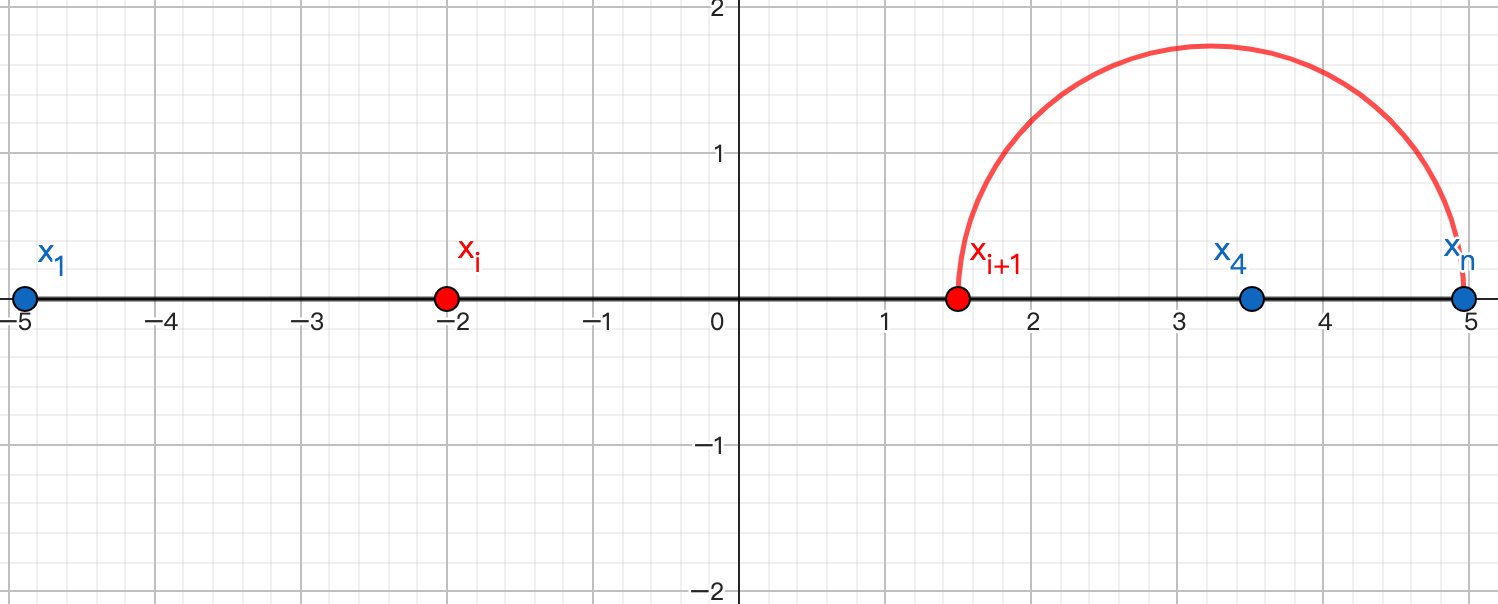}
\caption{Case 3}
\end{figure}

Either $x_i$ or $x_{i+1}$ is an endpoint of a single integration contour $\Gamma_1$ of $\mathcal{J}^{(m,n)}_\alpha(\boldsymbol{x})$, but the other is not an endpoint of any contour. We analyze the asymptotic behavior of the Coulomb gas integral $\mathcal{J}^{(m,n)}(\boldsymbol{x})$ as $x_{i+1} \rightarrow x_i$ for all $\kappa \in(0,8)$ with $8 / \kappa \notin \mathbb{Z}^{+}$.

With $\kappa>4$, $\Gamma_1$ is a simple contour, and we decompose it into one simple contour $\Gamma_1^{\prime}$ with its right endpoint at $x_{i-1}$ and another $\Gamma_1^{\prime \prime}$ with its endpoints at $x_i$ and $x_{i-1}$. (We might have $\Gamma_1^{\prime}=\emptyset$ and $\Gamma_1=\Gamma_1^{\prime \prime}$.)

After executing steps 1-6 (\ref{steps}) as shown in case 2, the key integral becomes:
\begin{equation}
\mathcal{J}^{(1, K)}\left(\{\beta_j\}\,|\,\Gamma_1^{\prime\prime}\,|\,x_1, \ldots, x_K\right) = \int_{\Gamma_1^{\prime\prime}} \mathcal{N}\left[\prod_{j=1}^\kappa (u_1 - x_j)^{\beta_j} \right] \, du_1
\end{equation}
where $K = m+n-1$, and $\mathcal{N}[\cdot]$ denotes a branch-ordering normalization for real-valuedness over the contour.

Relabeling powers $\gamma$ in $\mathcal{J}^{(m,n)}$ to $\beta_j$, we obtain:
\begin{align*}
s := \sum_{j=1}^K \beta_j = -2, &\quad \beta_j \in \left\{ -\frac{4}{\kappa}, \frac{8}{\kappa}, \frac{2(2n - 4m + 4 - \kappa)}{\kappa} \right\}, \\
\beta_{i-1} = \beta_i = \beta_{i+1} &= -\frac{4}{\kappa}, \quad \beta_{i+2} \in \left\{ -\frac{4}{\kappa}, \frac{2(2n - 4m + 4 - \kappa)}{\kappa} \right\}.
\end{align*}

These imply the milder conditions:
\begin{equation}
\sum_{j=1}^K \beta_j \in \mathbb{Z}^{-} \setminus\{-1\}, \quad \beta_j > -1, \quad \beta_i + \beta_{i+1} \notin \mathbb{Z}, \quad \beta_i + \beta_{i+1} < -1
\end{equation}

Define definite integrals:
\begin{equation}
I_k := \int_{x_k}^{x_{k+1}} \mathcal{N}\left[\prod_{j=1}^K (u_1 - x_j)^{\beta_j} \right] \, du_1, \quad \text{and } \int_{x_K}^{x_{K+1}} := \int_{x_K}^\infty + \int_{-\infty}^{x_1}
\end{equation}

Using contour deformation and the Cauchy theorem:
\begin{equation}
\sum_{k=1}^K e^{\pm \pi i \sum_{l=1}^k \beta_l} I_k = 0
\end{equation}

Solving for $I_{i-1}$ gives:
\begin{align*}
I_{i-1} &\sim -\sum_{k=1}^{i-2} \frac{\sin(\pi \sum_{l=k+1}^{i+1} \beta_l)}{\sin(\pi(\beta_i+\beta_{i+1}))} \int_{x_k}^{x_{k+1}} \mathcal{N}\left[(u-x_i)^{\beta_i+\beta_{i+1}} \prod_{j \neq i,i+1} (u-x_j)^{\beta_j} \right] du \\
&\quad + \sum_{k=i+2}^{K} \frac{\sin(\pi \sum_{l=i+2}^{k} \beta_l)}{\sin(\pi(\beta_i+\beta_{i+1}))} \int_{x_i}^{x_{k+1}} \mathcal{N}\left[(u-x_i)^{\beta_i+\beta_{i+1}} \prod_{j \neq i,i+1} (u-x_j)^{\beta_j} \right] du \\
&\quad - \frac{\sin(\pi \beta_{i+1}) \Gamma(\beta_i + 1) \Gamma(\beta_{i+1} + 1)}{\sin(\pi(\beta_i + \beta_{i+1})) \Gamma(\beta_i + \beta_{i+1} + 2)} (x_{i+1} - x_i)^{\beta_i + \beta_{i+1} + 1} \prod_{j \neq i,i+1} (x_i - x_j)^{\beta_j}
\end{align*}

Therefore,
\begin{equation}
\mathcal{J}^{(1,K)} \sim - \frac{\sin(\pi \beta_{i+1}) \Gamma(\beta_i + 1) \Gamma(\beta_{i+1} + 1)}{\sin(\pi(\beta_i + \beta_{i+1})) \Gamma(\beta_i + \beta_{i+1} + 2)} (x_{i+1} - x_i)^{\beta_i + \beta_{i+1} + 1} \prod_{j \neq i,i+1} (x_i - x_j)^{\beta_j}
\end{equation}

To analytically continue to $\kappa \in (0,4]$, where $\beta_k \leq -1$, we replace each definite integral by its Pochhammer contour analog:
\begin{equation}
\int_{x_k}^{x_{k+1}} \longmapsto \frac{1}{4 e^{\pi i(\beta_k - \beta_{k+1})} \sin(\pi \beta_k) \sin(\pi \beta_{k+1})} \oint_{\mathscr{P}(x_k,x_{k+1})}
\end{equation}

This yields for $\kappa \in (0,4]$:
\begin{align*}
\mathcal{J}^{(1,K)} \sim &\ 4 e^{\pi i(\beta_{i-1} - \beta_i)} \sin(\pi \beta_{i-1}) \sin(\pi \beta_i)
\cdot \left[ - \frac{\sin(\pi \beta_{i+1}) \Gamma(\beta_i + 1) \Gamma(\beta_{i+1} + 1)}{\sin(\pi(\beta_i + \beta_{i+1})) \Gamma(\beta_i + \beta_{i+1} + 2)} \right] \\
& \cdot (x_{i+1} - x_i)^{\beta_i + \beta_{i+1} + 1} \prod_{j \neq i,i+1} (x_i - x_j)^{\beta_j}
\end{align*}

\subsection{Case 4: $x_i$, $x_{i+1}$ are endpoints of two distinct contour} 
\begin{figure}[h]
\centering
\includegraphics[width=15cm]{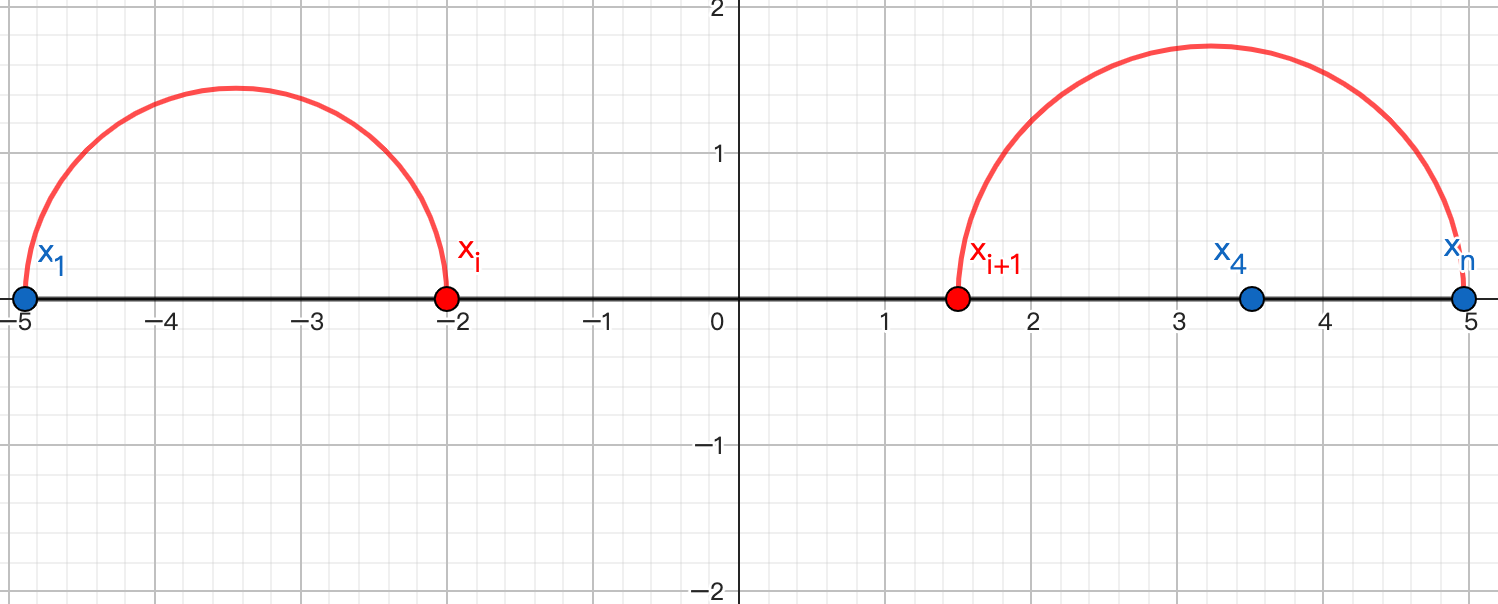}
\caption{Case 4}
\end{figure}
We analyze the asymptotic behavior of the Coulomb gas integral $\mathcal{J}^{(m,n)}(\boldsymbol{x})$ as $x_{i+1}\to x_i$ when $x_i$ is an endpoint of contour $\Gamma_1$ and $x_{i+1}$ is an endpoint of a different contour $\Gamma_2$. For $\kappa>4$ with $\Gamma_1$ and $\Gamma_2$ being simple contours, we decompose:

\[
\Gamma_1 = \begin{cases}
\Gamma_1' \text{ (endpoints at $x_{i-1}$)} \\
\Gamma_1'' \text{ (endpoints at $x_{i-1},x_i$)}
\end{cases}, \quad
\Gamma_2 = \begin{cases}
\Gamma_2' \text{ (endpoints at $x_{i+2}$)} \\
\Gamma_2'' \text{ (endpoints at $x_{i+1},x_{i+2}$)}
\end{cases}
\]

where $\Gamma_1'$ or $\Gamma_2'$ may be empty. The exact form depends on $\kappa$:

\[
\Gamma_1'' = \begin{cases}
\mathscr{P}(x_{i-1},x_i), & 0<\kappa\leq4 \\
[x_{i-1},x_i]^+, & 4<\kappa<8
\end{cases}, \quad
\Gamma_2'' = \begin{cases}
\mathscr{P}(x_{i+1},x_{i+2}), & 0<\kappa\leq4 \\
[x_{i+1},x_{i+2}]^+, & 4<\kappa<8
\end{cases}
\]

Using Fubini's theorem, we first integrate $u_1$ along $\Gamma_1$, then $u_2$ along $\Gamma_2$, followed by $u_3,\ldots,u_m$ along their respective contours. For $l>2$, $(x_{i+1}-u_l)^{-4/\kappa}\to(x_i-u_l)^{-4/\kappa}$ uniformly as $x_{i+1}\to x_i$, so the asymptotic behavior is determined by:

\[
\mathcal{J}^{(2,K)}\left(\{\beta_j\};\gamma|\Gamma_1'',\Gamma_2''|x_1,\ldots,x_K\right) = \iint_{\Gamma_1''\times\Gamma_2''} \mathcal{N}\left[\prod_{j=1}^K (u_1-x_j)^{\beta_j}(u_2-x_j)^{\beta_j}(u_2-u_1)^\gamma\right]du_2 du_1
\]

where $K=m+n-2$ and $x_j\notin(x_{i-1},x_{i+2})$. The exponents satisfy:

\[
\begin{gathered}
s := \sum_{j=1}^K \beta_j + \gamma = -2 \\
\beta_j \in \{-4/\kappa, 8/\kappa, 2(2n-4m+4-\kappa)/\kappa\} \\
\beta_i = \beta_{i+1} = -4/\kappa, \quad \beta_{i+2} \in \{-4/\kappa, 2(2n-4m+4-\kappa)/\kappa\}, \quad \gamma = 8/\kappa
\end{gathered}
\]

For $\kappa\in(4,8)$, these imply $s \in \mathbb{Z}^{-}\setminus\{-1\}$, $\beta_i+\beta_{i+1}+\gamma/2 \notin\mathbb{Z}$, $\beta_j > -1$ for all $j$, $\beta_i+\beta_{i+1} < -1$, and $\beta_i+\gamma/2 = 0$.

We define the integrals for $m,k\in\{1,\ldots,K\}$:

\[
I_{m,k}(\boldsymbol{x}) := \int_{x_m}^{x_{m+1}}\int_{x_k}^{x_{k+1}} \mathcal{N}[\cdots] du_2 du_1
\]

with the convention for $k=K$:

\[
\int_{x_K}^{x_{K+1}} := \int_{x_K}^\infty + \int_{-\infty}^{x_1}
\]

For the diagonal terms ($m=k$):

\[
I_{k,k}(\boldsymbol{x}) := \int_{x_{k-1}}^{x_k}\int_{x_{k-1}}^{u_1} \mathcal{N}[\cdots] du_2 du_1 + \int_{x_{k-1}}^{x_k}\int_{u_1}^{x_k} \mathcal{N}[\cdots] du_2 du_1
\]

Using a large semicircular contour (radius $R\to\infty$) in the upper/lower half-plane:

\[
\sum_{k=1}^{i-2} e^{\pm\pi i\sum_{l=1}^k \beta_l} I_{i-1,k} + e^{\pm\pi i\sum_{l=1}^{i-1}\beta_l}(1+e^{\pm\pi i\gamma})I_{i-1,i-1} + \sum_{k=i}^K e^{\pm\pi i(\sum_{l=1}^k \beta_l+\gamma)} I_{i-1,k} = 0
\]

Solving these equations yields:

\begin{equation} 
\begin{aligned}
I_{i-1,i+1} = & \sum_{k=1}^{i-2} \frac{\sin\pi(\sum_{l=k+1}^{i-1}\beta_l+\gamma/2)}{\sin\pi(\beta_i+\beta_{i+1}+\gamma/2)} I_{i-1,k} \\
& - \sum_{k=i+2}^K \frac{\sin\pi(\sum_{l=i}^k\beta_l+\gamma/2)}{\sin\pi(\beta_i+\beta_{i+1}+\gamma/2)} I_{i-1,k} \\
& - \frac{\sin\pi(\beta_i+\gamma/2)}{\sin\pi(\beta_i+\beta_{i+1}+\gamma/2)} I_{i-1,i}
\end{aligned}
\end{equation}

For $k=i$, $I_{i-1,i}$ contains divergent terms ($\beta_i+\beta_{i+1}<-1$) but its coefficient vanishes due to $\beta_i+\gamma/2=0$. For $k\notin\{i,i\pm1\}$:

\[
I_{i-1,k} \sim -\frac{\sin\pi\beta_{i+1}}{\sin\pi(\beta_i+\beta_{i+1})} I_{i,k}
\]

where $I_{i,k}$ falls under Case 2 and diverges as $(x_{i+1}-x_i)^{\beta_i+\beta_{i+1}+1}$.

After careful analysis of all terms, we obtain:

\[
\begin{aligned}
I_{i-1,i+1} &\sim -\frac{\sin\pi\beta_{i+1}\Gamma(\beta_i+1)\Gamma(\beta_{i+1}+1)}{\sin\pi(\beta_i+\beta_{i+1})\Gamma(\beta_i+\beta_{i+1}+2)} (x_{i+1}-x_i)^{\beta_i+\beta_{i+1}+1} \\
&\quad \times \mathcal{N}\left[\prod_{j\neq i,i+1}^K (x_i-x_j)^{\beta_j}\right] \\
&\quad \times \left[\sum_{k=1}^{i-2} \frac{\sin\pi(\sum_{l=k+1}^{i-1}\beta_l+\gamma/2)}{\sin\pi(\beta_i+\beta_{i+1}+\gamma/2)} - \sum_{k=i+2}^K \frac{\sin\pi(\sum_{l=i}^k\beta_l+\gamma/2)}{\sin\pi(\beta_i+\beta_{i+1}+\gamma/2)}\right] \\
&\quad \times \int_{x_k}^{x_{k+1}} \mathcal{N}\left[(u_2-x_i)^{\beta_i+\beta_{i+1}+\gamma} \prod_{j\neq i,i+1}^K (u_2-x_j)^{\beta_j}\right] du_2
\end{aligned}
\]

Introducing the joined contour $\Gamma_0' = [x_{i-1},x_{i+2}]^+$, we obtain the cleaner asymptotic form:

\[
\begin{aligned}
\mathcal{J}^{(2,K)} &\sim \frac{\sin\pi\beta_{i+1}\Gamma(\beta_i+1)\Gamma(\beta_{i+1}+1)}{\sin\pi(\beta_i+\beta_{i+1})\Gamma(\beta_i+\beta_{i+1}+2)} (x_{i+1}-x_i)^{\beta_i+\beta_{i+1}+1} \\
&\quad \times \mathcal{N}\left[\prod_{j\neq i,i+1}^K (x_i-x_j)^{\beta_j}\right] \int_{x_{i-1}}^{x_{i+2}} \mathcal{N}\left[\prod_{j\neq i,i+1}^K (u_2-x_j)^{\beta_j}\right] du_2
\end{aligned}
\]

For $\kappa\leq4$, we use the analytically continued form:

\[
\int_{x_k}^{x_{k+1}} \mapsto \frac{1}{4e^{\pi i(\beta_k-\beta_{k+1})}\sin\pi\beta_k\sin\pi\beta_{k+1}} \oint_{\mathscr{P}(x_k,x_{k+1})}
\]

which yields:

\[
\begin{aligned}
\mathcal{J}^{(2,K)} &\sim 4e^{\pi i(\beta_{i-1}-\beta_i)}\sin\pi\beta_{i-1}\sin\pi\beta_i \\
&\quad \times \frac{\sin\pi\beta_{i+1}\Gamma(\beta_i+1)\Gamma(\beta_{i+1}+1)}{\sin\pi(\beta_i+\beta_{i+1})\Gamma(\beta_i+\beta_{i+1}+2)} (x_{i+1}-x_i)^{\beta_i+\beta_{i+1}+1} \\
&\quad \times \mathcal{N}\left[\prod_{j\neq i,i+1}^K (x_i-x_j)^{\beta_j}\right]
\end{aligned}
\]

Combining all cases, the final asymptotic behavior is:

\[
\lim_{x_i,x_{i+1}\to p} (x_{i+1}-x_i)^{6/\kappa-1} \mathcal{J}^{(m,n)}_\alpha(\boldsymbol{x}) = \mathcal{J}^{(m-1,n-2)}_{\hat{\alpha}}(\boldsymbol{\hat{x}})
\]

\section{Asymptotics of radial partition functions as interval collapse}

In this section, we compute the asymptotic behavior of the following Coulomb gas integral $\mathcal{J}^{(m,n)}_\alpha(\boldsymbol{x})$ following the method developed in \cite{FK15a}.
\begin{equation}
\begin{aligned}
\mathcal{J}^{(m,n)}_\alpha(\boldsymbol{x}) =&\left[\frac{n(\kappa) \Gamma(2-8 / \kappa)}{4 \sin ^2(4 \pi / \kappa) \Gamma(1-4 / \kappa)^2}\right]^m \left(\prod_{i<j}^{n}\left(x_j-x_i\right)^{2 / \kappa}\right) \\
& \left(\prod_{i=1}^{n}\left|z-x_i\right|^{(\kappa-2n+4m-4) / \kappa}\right) \times|z-\bar{z}|^{(\kappa-2n+4m-4)^2 / 8 \kappa} \\
&\oint_{\Gamma_1} \ldots \oint_{\Gamma_{m}} d u_1 \ldots d u_{m}\left(\prod_{k=1}^{n} \prod_{l=1}^{m}\left(x_k-u_l\right)^{-4 / \kappa}\right)\left(\prod_{p<q}^{m}\left(u_p-u_q\right)^{8 / \kappa}\right) \\
&
\times\left(\prod_{k=1}^{m}\left(z-u_k\right)^{(2n-4m+4-\kappa) / \kappa}\left(\bar{z}-u_k\right)^{(2n-4m+4-\kappa) / \kappa}\right)
\end{aligned}
\end{equation}

\begin{thm}
\indent
\begin{itemize}
 \item[Configuration 1] Neither $x_i$ nor $x_{i+1}$ are endpoints of an integration contour of $\mathcal{J}^{(m,n)}_\alpha(\boldsymbol{x})$.

\begin{equation}
   \lim_{x_i,x_{i+1}\rightarrow p} (x_{i+1}-x_i)^{\frac{6}{\kappa}-1}\mathcal{J}^{(m,n)}_\alpha(\boldsymbol{x}) = 0   
\end{equation}

\item[Configuration 2] Both $x_i$ and $x_{i+1}$ are endpoints of a single, common integration contour $\Gamma_1$ of $\alpha$. Hence, this contour is $\Gamma_1=\mathscr{P}\left(x_i, x_{i+1}\right)$ or $\left[x_i, x_{i+1}\right]^{+}$. (The superscript + indicates that we form the contour $\left[x_i, x_{i+1}\right]^{+}$by slightly bending $\left[x_i, x_{i+1}\right]$ into the upper half-plane, keeping the endpoints fixed.)

\begin{equation}
   \lim_{x_i,x_{i+1}\rightarrow p} (x_{i+1}-x_i)^{\frac{6}{\kappa}-1}\mathcal{J}^{(m,n)}_\alpha(\boldsymbol{x}) = n(\kappa) \mathcal{J}^{(m,n)}_{\hat{\alpha}} (\boldsymbol{\hat{x}} )
\end{equation}

\item[Configuration 3] Either $x_i$ or $x_{i+1}$ is an endpoint of a single integration contour $\Gamma_1$ of $\mathcal{J}^{(m,n)}_\alpha(\boldsymbol{x})$, but the other is not an endpoint of any contour. 

We assume that $i>1, \kappa>4, x_i$ is an endpoint of $\Gamma_1$, and $\Gamma_1$ does not pass over the interval $\left(x_i, x_{i+1}\right)$.

With $\kappa>4$, $\Gamma_1$ is a simple contour, and we decompose it into one simple contour $\Gamma_1^{\prime}$ with its right endpoint at $x_{i-1}$ and another $\Gamma_1^{\prime \prime}$ with its endpoints at $x_i$ and $x_{i-1}$. (We might have $\Gamma_1^{\prime}=\emptyset$ and $\Gamma_1=\Gamma_1^{\prime \prime}$.) Also, the limit breaks into
\begin{equation} \label{config3}
\begin{aligned}
\lim _{x_{i}, x_{i+1}\rightarrow p}\left(x_{i+1}-x_i\right)^{6 / \kappa-1} \mathcal{J}^{(m,n)}_\alpha= & \lim _{x_{i+1} \rightarrow x_i}\left(x_{i+1}-x_i\right)^{6 / \kappa-1}\left(\left.\mathcal{J}^{(m,n)}_\alpha\right|_{\Gamma_1 \mapsto \Gamma_1^{\prime}}\right) \\
& +\lim _{x_{i+1} \rightarrow x_i}\left(x_{i+1}-x_i\right)^{6 / \kappa-1}\left(\left.\mathcal{J}^{(m,n)}_\alpha\right|_{\Gamma_1 \mapsto \Gamma_1^{\prime \prime}}\right) .
\end{aligned}
\end{equation}
The first limit on the right side of \ref{config3} falls under case 1 and, therefore, vanishes. Meanwhile, the second limit on the right side of \ref{config3}  still falls under case 3. 

\begin{equation}
   \lim_{x_i,x_{i+1}\rightarrow p} (x_{i+1}-x_i)^{\frac{6}{\kappa}-1}\mathcal{J}^{(m,n)}_\alpha(\boldsymbol{x}) = \mathcal{J}^{(m-1,n)}_{\hat{\alpha}} (\boldsymbol{\hat{x}} )
\end{equation}
   
\item[Configuration 4]  $x_i$ is an endpoint of one contour $\Gamma_1$ , and $x_{i+1}$ is an endpoint of a different contour $\Gamma_2$.

With $\kappa>4$ and $\Gamma_1$ and $\Gamma_2$ simple contours, we decompose $\Gamma_1$ (resp. $\Gamma_2$ ) into one simple contour $\Gamma_1^{\prime}$ (resp. $\Gamma_2^{\prime}$ ) with an endpoint at $x_{i-1}$ (resp. $x_{i+2}$ ) and another $\Gamma_1^{\prime \prime}$ (resp. $\Gamma_2^{\prime \prime}$ ) with its endpoints at $x_{i-1}$ and $x_i$ (resp. $x_{i+1}$ and $x_{i+2}$ ). In some cases, we might have $\Gamma_1^{\prime}=\emptyset$ and $\Gamma_1^{\prime \prime}=\Gamma_1$ (resp. $\Gamma_2^{\prime}=\emptyset$ and $\Gamma_2^{\prime \prime}=\Gamma_2$ ). This decomposition contains within it three sub-cases: neither $\Gamma_1$ nor $\Gamma_2$ passes over $\left(x_i, x_{i+1}\right)$, only $\Gamma_2$ passes over $\left(x_i, x_{i+1}\right)$, or only $\Gamma_1$ passes over $\left(x_i, x_{i+1}\right)$. Similarly, the limit decomposes into
\begin{equation}
\begin{aligned}
\lim _{x_{i}, x_{i+1}\rightarrow p}\left(x_{i+1}-x_i\right)^{6 / \kappa-1} \mathcal{J}^{(m,n)}_\alpha & =\lim _{x_{i+1} \rightarrow x_i}\left(x_{i+1}-x_i\right)^{6 / \kappa-1}\left(\left.\mathcal{J}^{(m,n)}_\alpha\right|_{\left(\Gamma_1, \Gamma_2\right) \mapsto\left(\Gamma_1^{\prime}, \Gamma_2^{\prime}\right)}\right) \\
& +\lim _{x_{i+1} \rightarrow x_i}\left(x_{i+1}-x_i\right)^{6 / \kappa-1}\left(\left.\mathcal{J}^{(m,n)}_\alpha\right|_{\left(\Gamma_1, \Gamma_2\right) \mapsto\left(\Gamma_1^{\prime \prime}, \Gamma_2^{\prime}\right)}\right)\\
& +\lim _{x_{i+1} \rightarrow x_i}\left(x_{i+1}-x_i\right)^{6 / \kappa-1}\left(\left.\mathcal{J}^{(m,n)}_\alpha\right|_{\left(\Gamma_1, \Gamma_2\right) \mapsto\left(\Gamma_1^{\prime}, \Gamma_2^{\prime \prime}\right)}\right)\\
& +\lim _{x_{i+1} \rightarrow x_i}\left(x_{i+1}-x_i\right)^{6 / \kappa-1}\left(\left.\mathcal{J}^{(m,n)}_\alpha\right|_{\left(\Gamma_1, \Gamma_2\right) \mapsto\left(\Gamma_1^{\prime \prime}, \Gamma_2^{\prime \prime}\right)}\right)
\end{aligned}
\end{equation}

The first limit on the right side of (\ref{config5}) falls under case 1 and therefore vanishes. The second (resp. third) limit on the right side of \ref{config5} falls under case 3 and therefore equals the element of $\mathcal{B}_{n-2,m-1}$ with contours $\Gamma_2^{\prime}$, (resp. $\left.\Gamma_1^{\prime}\right) \Gamma_3, \ldots, \Gamma_{m}$. Finally, the fourth limit on the right side of (\ref{config5}) still falls under case 4, and we compute it later.
There, we deform $\Gamma_1^{\prime \prime}$ and $\Gamma_2^{\prime \prime}$ in a way that generates terms only falling under cases 1 and 2. Only the latter type of term has a non-vanishing limit as $x_{i+1} \rightarrow x_i$, and a factor of $n(\kappa)^{-1}$ accompanies it. Thus, the last limit on the right side of (\ref{config5}) is the element of $\mathcal{B}_{n-2,m-1}$ with contours $\Gamma_0^{\prime}:=\left[x_{i-1}, x_{i+2}\right]^{+}, \Gamma_3, \Gamma_4, \ldots, \Gamma_{m}$. After summing all four terms, we find that the right side of (\ref{config5}) equals the element of $\mathcal{B}_{n-2,m-1}$ with contours $\Gamma:=\Gamma_0^{\prime}+\Gamma_1^{\prime}+\Gamma_2^{\prime}$, and $\Gamma_3, \Gamma_4, \ldots, \Gamma_{m}$.

\begin{equation}
   \lim_{x_i,x_{i+1}\rightarrow p} (x_{i+1}-x_i)^{\frac{6}{\kappa}-1}\mathcal{J}^{(m,n)}_\alpha(\boldsymbol{x}) =  \mathcal{J}^{(m-1,n-2)}_{\hat{\alpha}} (\boldsymbol{\hat{x}} )
\end{equation}
\item[Configuration 5]When \( x_i \) and \( x_{i+1} \) are endpoints of the same integration contour \( \Gamma_1 \), but we consider their collapse along the complementary arc. 

Without loss of generality, we may assume that \( x_1 = -R \) and \( x_n = R \) for large \( R \), and study the asymptotic behavior as \( R \to \infty \). In this case, the Coulomb gas integral \( \mathcal{J}^{(m,n)}_\alpha(\boldsymbol{x}) \) admits the following decomposition:
\begin{equation} \label{eq:case5-decomposition}
\begin{split}
\lim_{R\to\infty} (2R)^{6/\kappa - 1} \mathcal{J}^{(m,n)}_\alpha(\boldsymbol{x}) = \;& 
\lim_{R\to\infty} (2R)^{6/\kappa - 1} \left.\mathcal{J}^{(m,n)}_\alpha\right|_{\Gamma_1 \to \Gamma_1'} \\
& + \lim_{R\to\infty} (2R)^{6/\kappa - 1} \left.\mathcal{J}^{(m,n)}_\alpha\right|_{\Gamma_1 \to \Gamma_1''} \\
& + \lim_{R\to\infty} (2R)^{6/\kappa - 1} \left.\mathcal{J}^{(m,n)}_\alpha\right|_{\Gamma_1 \to \Gamma_1'''}.
\end{split}
\end{equation}

With $\kappa>4$ and \( \Gamma_1 \) simple contours, we decompose $\Gamma_1$ as \( \Gamma_1' \): an arc connecting \( (x_1, x_2) \),\( \Gamma_1'' \): the portion of \( \Gamma_1 \) running from \( x_2 \) to \( x_{n-1} \), \( \Gamma_1''' \): the arc connecting \( (x_{n-1}, x_n) \).

The same decomposition applies in the general setting of collapsing \( x_i \) and \( x_{i+1} \), yielding:
\begin{equation}
\begin{aligned}
\lim _{x_{i}, x_{i+1}\rightarrow p} (x_{i+1} - x_i)^{6/\kappa - 1} \mathcal{J}^{(m,n)}_\alpha(\boldsymbol{x}) = \;& 
\lim_{x_{i+1} \to x_i} (x_{i+1} - x_i)^{6/\kappa - 1} \left.\mathcal{J}^{(m,n)}_\alpha\right|_{\Gamma_1 \mapsto \Gamma_1'} \\
& + \lim_{x_{i+1} \to x_i} (x_{i+1} - x_i)^{6/\kappa - 1} \left.\mathcal{J}^{(m,n)}_\alpha\right|_{\Gamma_1 \mapsto \Gamma_1''} \\
& + \lim_{x_{i+1} \to x_i} (x_{i+1} - x_i)^{6/\kappa - 1} \left.\mathcal{J}^{(m,n)}_\alpha\right|_{\Gamma_1 \mapsto \Gamma_1'''}.
\end{aligned}
\end{equation}

In this complementary collapse configuration, we eventually obtain the expected fusion behavior:
\begin{equation}
\lim_{x_i, x_{i+1} \to p} (x_{i+1} - x_i)^{6/\kappa - 1} \mathcal{J}^{(m,n)}_\alpha(\boldsymbol{x}) = 2 \mathcal{J}^{(m-1,n)}_{\hat{\alpha}}(\hat{\boldsymbol{x}}),
\end{equation}
where \( \hat{\alpha} \) is the link pattern obtained by removing the arc \( (x_i, x_{i+1}) \), and \( \hat{\boldsymbol{x}} \) is the configuration with these two points collapsed to \( p \).

\end{itemize}
\end{thm}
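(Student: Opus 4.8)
\medskip
\noindent\textbf{Proof proposal.}
The plan is to obtain Configurations~1--4 as essentially verbatim transcriptions of the chordal computation (Cases~1--4 of Section~3) and to concentrate the real work on Configuration~5, the collapse of $x_i$ and $x_{i+1}$ along the complementary arc. The first observation is that the radial integrand differs from the chordal one only through the prefactor $\prod_{i}|z-x_i|^{(\kappa-2n+4m-4)/\kappa}|z-\bar z|^{(\kappa-2n+4m-4)^2/8\kappa}$ and the extra integrand factor $\prod_{k}(z-u_k)^{(2n-4m+4-\kappa)/\kappa}(\bar z-u_k)^{(2n-4m+4-\kappa)/\kappa}$. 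Since $z$ and $\bar z$ are held fixed off the real axis and away from the collapse point $p$, the prefactor depends only on the $x_i$ and converges to its value at $\hat{\boldsymbol x}$, while the extra integrand factor is bounded and convergent uniformly in $u_1,\dots,u_m$ along the fixed contours. Hence, after running the reduction Steps~1--6 of Section~3, the leading $(x_{i+1}-x_i)$-asymptotics is governed by exactly the model integrals $\mathcal{J}^{(1,K)}$ and $\mathcal{J}^{(2,K)}$ treated there, and Configurations~1--4 transfer with the $z,\bar z$ data simply carried along and relabelled; in particular the only inputs needed afterwards for Configuration~5 are Configurations~1 and~3 of the present theorem.

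For Configuration~5 I would first use the three Ward identities to pass to the finite-collapse frame (equivalently, the $R\to\infty$ frame of the statement, with $x_i=-R$ and $x_{i+1}=R$), so that $x_i$ and $x_{i+1}$ become cyclically adjacent with $p$ lying between them and the renormalisation is the usual $(x_{i+1}-x_i)^{6/\kappa-1}$. Here $\Gamma_1$ joins $x_i$ to $x_{i+1}$ the long way around, over $x_1,\dots,x_{i-1},x_{i+2},\dots,x_n$ and over $z,\bar z$. For $\kappa>4$ it is a simple arc, and path-additivity splits it as $\Gamma_1=\Gamma_1'+\Gamma_1''+\Gamma_1'''$, with $\Gamma_1'$ a short arc joining $x_{i-1}$ and $x_i$, $\Gamma_1'''$ a short arc joining $x_{i+1}$ and $x_{i+2}$, and $\Gamma_1''$ the long arch from $x_{i-1}$ to $x_{i+2}$ routed on a fixed side of $z$ and $\bar z$; for $0<\kappa\le 4$ one replaces the two short arcs by Pochhammer analogues carrying the same trigonometric normalisations as in Cases~2--4. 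This reproduces the three-term decomposition displayed in the statement; note that none of the three arcs shrinks to a point under $x_i,x_{i+1}\to p$, since $\Gamma_1'$ and $\Gamma_1'''$ attach to the fixed points $x_{i-1}$ and $x_{i+2}$, so the Configuration~2 loop factor $n(\kappa)$ does not intervene.

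I would then evaluate the three terms using the configurations already in hand. In $\mathcal{J}^{(m,n)}_\alpha|_{\Gamma_1\mapsto\Gamma_1'}$ the point $x_i$ is an endpoint of $\Gamma_1'$, the point $x_{i+1}$ is an endpoint of no contour, and $\Gamma_1'$ does not pass over $(x_i,x_{i+1})$; hence Configuration~3 applies and the renormalised limit equals $\mathcal{J}^{(m-1,n)}_{\hat\alpha}(\hat{\boldsymbol x})$, with $\hat\alpha$ and $\hat{\boldsymbol x}$ the reduced data as in Configuration~3. The term $\mathcal{J}^{(m,n)}_\alpha|_{\Gamma_1\mapsto\Gamma_1'''}$ is the mirror image across the collapse point and yields the same $\mathcal{J}^{(m-1,n)}_{\hat\alpha}(\hat{\boldsymbol x})$. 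In $\mathcal{J}^{(m,n)}_\alpha|_{\Gamma_1\mapsto\Gamma_1''}$ neither $x_i$ nor $x_{i+1}$ is a contour endpoint, so Configuration~1 applies: the only surviving $(x_{i+1}-x_i)$-dependence is the single Vandermonde factor $(x_{i+1}-x_i)^{2/\kappa}$ (the $z,\bar z$ data tending to finite limits), so the renormalised term behaves like $(x_{i+1}-x_i)^{8/\kappa-1}\to 0$ for $\kappa<8$. Summing the three contributions yields $2\,\mathcal{J}^{(m-1,n)}_{\hat\alpha}(\hat{\boldsymbol x})$.

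The step I expect to be the main obstacle is the contour bookkeeping for the two short arcs, and specifically the claim that the two Configuration~3 contributions genuinely coincide, so that the prefactor is $2$ rather than $0$. One has to check that the screening charge absorbed along $\Gamma_1'$ and the one absorbed along $\Gamma_1'''$ deposit the same total charge at the merged point $p$, with matching phase. For $\kappa>4$ with real contours this is transparent from the explicit $(x_{i+1}-x_i)^{\beta_i+\beta_{i+1}+1}$ asymptotics of Cases~2--3; for $0<\kappa\le 4$ it requires verifying that the $e^{\pi i(\beta-\beta')}\sin(\pi\beta)\sin(\pi\beta')$ factors produced by the Pochhammer substitutions recombine symmetrically and leave no residual $\sin(\pi\,\cdot)$ in the limit. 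The routing of $\Gamma_1''$ past $z$ and $\bar z$ is then harmless, precisely because the $z,\bar z$ factors stay bounded along $\Gamma_1''$ and do not affect the $(x_{i+1}-x_i)\to 0$ scaling, as in the first step.
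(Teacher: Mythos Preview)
Your proposal is correct and follows essentially the same route as the paper. For Configurations~1--4 the paper indeed just reruns the chordal computations of Section~3 with the extra $z,\bar z$ factors carried along as harmless spectators, exactly as you describe; for Configuration~5 the paper performs the same three-term splitting $\Gamma_1=\Gamma_1'+\Gamma_1''+\Gamma_1'''$ (in the $x_1=-R$, $x_n=R$ frame these are the arcs $(x_1,x_2)$, $(x_2,x_{n-1})$, $(x_{n-1},x_n)$), observes that the middle piece falls under Configuration~1 and vanishes, and that the two end pieces each fall under Configuration~3 and contribute identically, giving the factor~$2$. Your flagging of the Pochhammer phase-matching for $\kappa\le 4$ as the place needing care is apt; the paper handles this only by asserting that the two end contributions are equal.
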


\subsection{Case 1: Neither \texorpdfstring{$x_i$}{xi} nor \texorpdfstring{$x_{i+1}$}{xi+1} are endpoints of contours} \label{case1}
\begin{figure}[h]
\centering
\includegraphics[width=15cm]{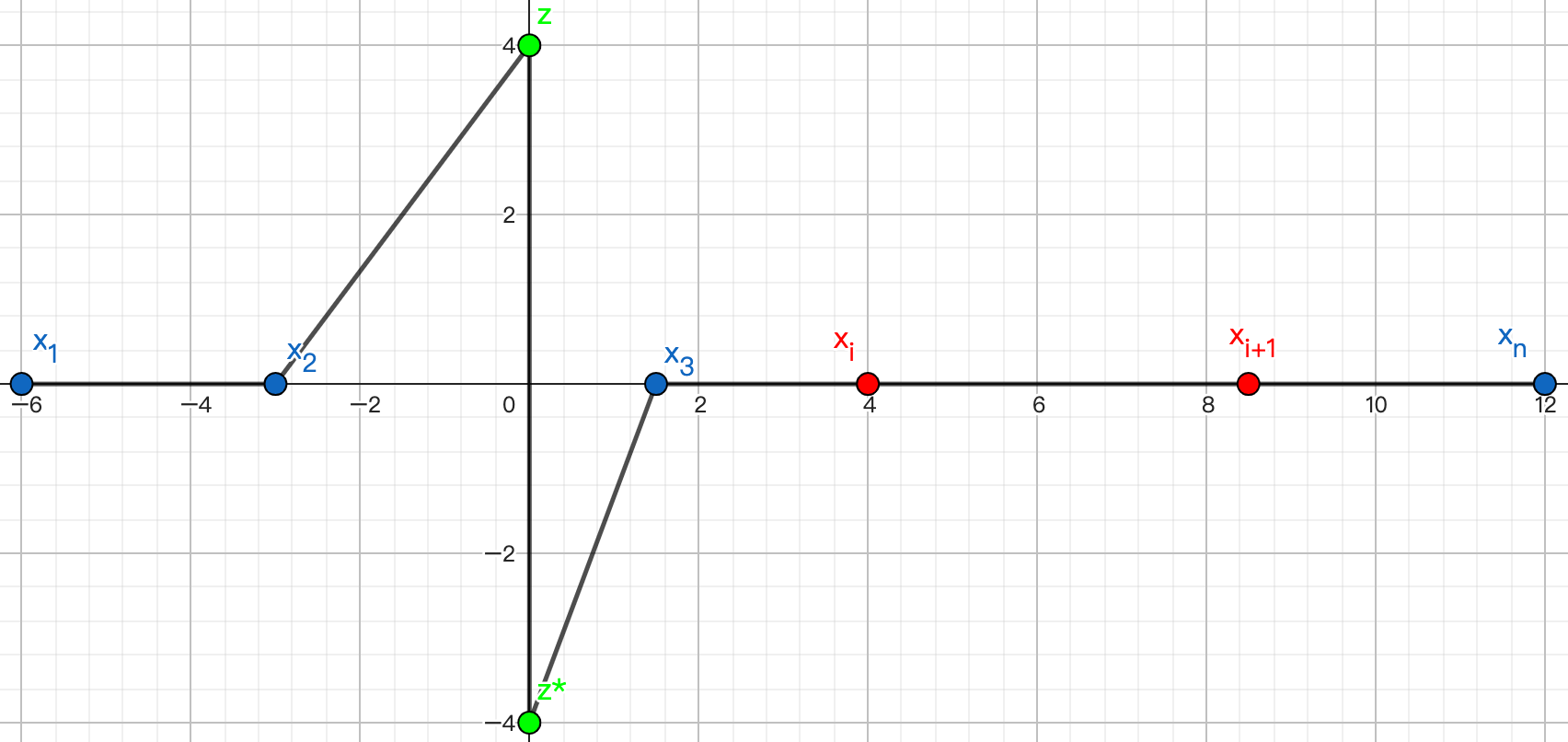}
\caption{Case 1}
\end{figure}
When neither $x_i$ nor $x_{i+1}$ are endpoints of any integration contour in $\mathcal{J}_{\alpha}^{(m,n)}(\boldsymbol{x})$, the asymptotic behavior is given by:

\begin{equation}
\begin{aligned}
&(x_{i+1}-x_i)^{\frac{6}{\kappa}-1}\mathcal{J}^{(m,n)}_\alpha(\boldsymbol{x}) \\
&= (x_{i+1}-x_i)^{\frac{8}{\kappa}-1} 
\left(\prod_{k=1}^{m} \frac{n(\kappa) \Gamma(2-8/\kappa)}{4 e^{\pi i(\beta_{1k}-\beta_{2k})} \sin(\pi \beta_{1k}) \sin(\pi \beta_{2k}) \Gamma(1-4/\kappa)^2}\right) \\
&\times \left(\prod_{\substack{i<j\\ j\neq i+1}}^{n}(x_j-x_i)^{2/\kappa}\right) 
\left(\prod_{i=1}^{n}|z-x_i|^{(\kappa-2n+4m-4)/\kappa}\right) 
|z-\bar{z}|^{(\kappa-2n+4m-4)^2/8\kappa} \\
&\times \oint_{\Gamma_1} \cdots \oint_{\Gamma_{m}} du_1 \cdots du_{m}
\left(\prod_{k=1}^{n} \prod_{l=1}^{m}(x_k-u_l)^{-4/\kappa}\right)
\left(\prod_{p<q}^{m}(u_p-u_q)^{8/\kappa}\right) \\
&\times \left(\prod_{k=1}^{m}(z-u_k)^{(2n-4m+4-\kappa)/\kappa}(\bar{z}-u_k)^{(2n-4m+4-\kappa)/\kappa}\right)
\end{aligned}
\end{equation}

The limit as $x_{i+1} \rightarrow x_i$ of $(x_{i+1}-u_m)^{-4/\kappa}$ is uniform over $u_m \in \Gamma_m$. For $\kappa \in (0,8)$, we have $\frac{8}{\kappa}-1 > 0$, which implies:

\begin{equation}
\lim_{x_i,x_{i+1}\rightarrow p} (x_{i+1}-x_i)^{\frac{8}{\kappa}-1} = 0.
\end{equation}

Consequently, we can directly set $x_{i+1}=x_i=p$ in the Coulomb gas integral, yielding:

\begin{equation}
\lim_{x_i,x_{i+1}\rightarrow p} (x_{i+1}-x_i)^{\frac{6}{\kappa}-1}\mathcal{J}^{(m,n)}_\alpha(\boldsymbol{x}) = 0.
\end{equation}

\begin{remark}
Key observations are:
\begin{itemize}
\item The vanishing limit occurs because $\frac{8}{\kappa}-1 > 0$ for all $\kappa \in (0,8)$
\item The uniformity of the limit allows direct substitution in the integral
\item The result shows no singularity develops when neither point is a contour endpoint
\end{itemize}
\end{remark}
\subsection{Case 2: $x_i$ and $x_{i+1}$ as endpoints of a single Contour} \label{case2}
\begin{figure}[h]
\centering
\includegraphics[width=15cm]{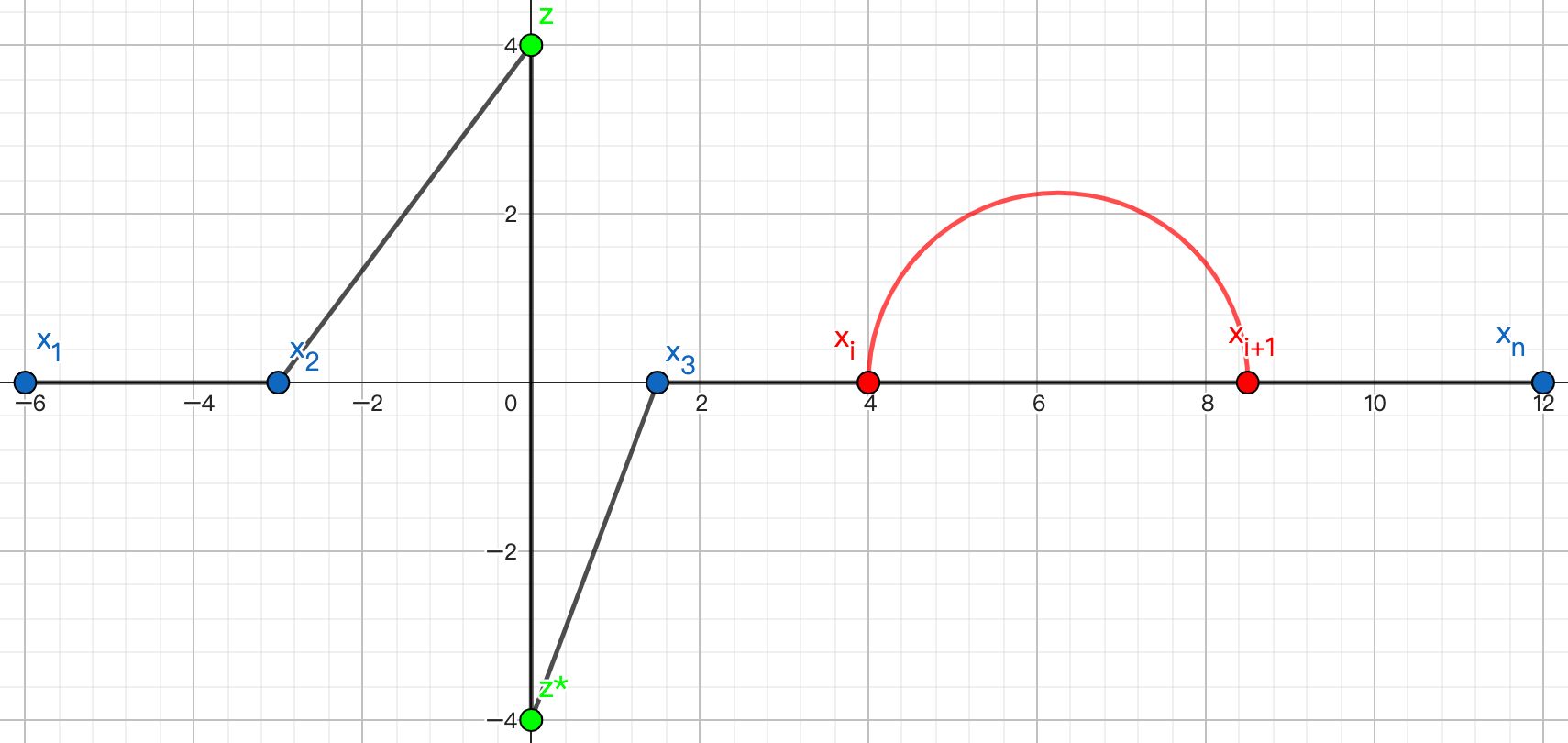}
\caption{Case 2}
\end{figure}
When both $x_i$ and $x_{i+1}$ are endpoints of the same integration contour $\Gamma_1$ in $\mathcal{J}^{(m,n)}_\alpha(\boldsymbol{x})$, the asymptotic behavior is governed by the contour structure:

\[
\Gamma_1 = \begin{cases}
\mathscr{P}(x_i,x_{i+1}) & \text{for } 0<\kappa\leq4 \\
[x_i,x_{i+1}]^+ & \text{for } 4<\kappa<8
\end{cases}
\]
where $[x_i,x_{i+1}]^+$ denotes a slight deformation into the upper half-plane and $\mathscr{P}(x_i,x_{i+1})$ is the Pochhammer contour.

The analysis proceeds through the following steps:

\begin{enumerate}
\item Using Fubini's theorem, we integrate $u_1$ first along $\Gamma_1$, followed by $u_2,\ldots,u_m$ along their respective contours.

\item For $m>1$, the limit $(x_{i+1}-u_l)^{-4/\kappa} \to (x_i-u_l)^{-4/\kappa}$ is uniform when $u_l \in \Gamma_l$ ($l>1$), allowing us to set $x_{i+1}=x_i$ in these factors.

\item The dominant behavior comes from the $u_1$ integration:
\[
\mathcal{J}^{(1,K)}\left(\{\beta_j\}|\Gamma_1|x_1,\ldots,x_K\right) = \int_{\Gamma_1} \mathcal{N}\left[\prod_{j=1}^K (u_1-x_j)^{\beta_j}\right] du_1
\]
where $K=m+n-1$ and $x_j \notin (x_i,x_{i+1})$.

\item The exponents satisfy:
\[
\beta_j \in \left\{-\frac{4}{\kappa}, \frac{8}{\kappa}, \frac{2n-4m+4-\kappa}{\kappa}\right\}, \quad \text{with } \beta_i=\beta_{i+1}=-\frac{4}{\kappa}.
\]
\end{enumerate}

For $\kappa>4$ with $\Gamma_1=[x_i,x_{i+1}]^+$, we make the substitution $u_1(t) = (1-t)x_i + tx_{i+1}$ to obtain:
\[
\mathcal{J}^{(1,K)} \sim \frac{\Gamma(1-4/\kappa)^2}{\Gamma(2-8/\kappa)} (x_{i+1}-x_i)^{1-8/\kappa} \mathcal{N}\left[\prod_{\substack{j=1\\j\neq i,i+1}}^K (x_i-x_j)^{\beta_j}\right].
\]

For $0<\kappa\leq4$ with the Pochhammer contour $\Gamma_1=\mathscr{P}(x_i,x_{i+1})$, the same substitution yields:
\[
\mathcal{J}^{(1,K)} \sim 4e^{-8\pi i/\kappa} \sin^2\left(\frac{4\pi}{\kappa}\right) \frac{\Gamma(1-4/\kappa)^2}{\Gamma(2-8/\kappa)} (x_{i+1}-x_i)^{1-8/\kappa} \mathcal{N}\left[\prod_{\substack{j=1\\j\neq i,i+1}}^K (x_i-x_j)^{\beta_j}\right].
\]

The combined asymptotic behavior takes the universal form:
\[
\lim_{x_i,x_{i+1}\to p} (x_{i+1}-x_i)^{6/\kappa-1} \mathcal{J}^{(m,n)}_\alpha(\boldsymbol{x}) = n(\kappa) \mathcal{J}^{(m,n)}_{\hat{\alpha}}(\boldsymbol{\hat{x}}),
\]
where the prefactor $n(\kappa)$ incorporates:
\begin{itemize}
\item The gamma function combination for $\kappa>4$
\item The phase and sine factors for $\kappa\leq4$
\item The operator product expansion normalization
\end{itemize}

\subsection{Case 3: $x_i$ as contour endpoint, $x_{i+1}$ not} \label{case3}
\begin{figure}[h]
\centering
\includegraphics[width=15cm]{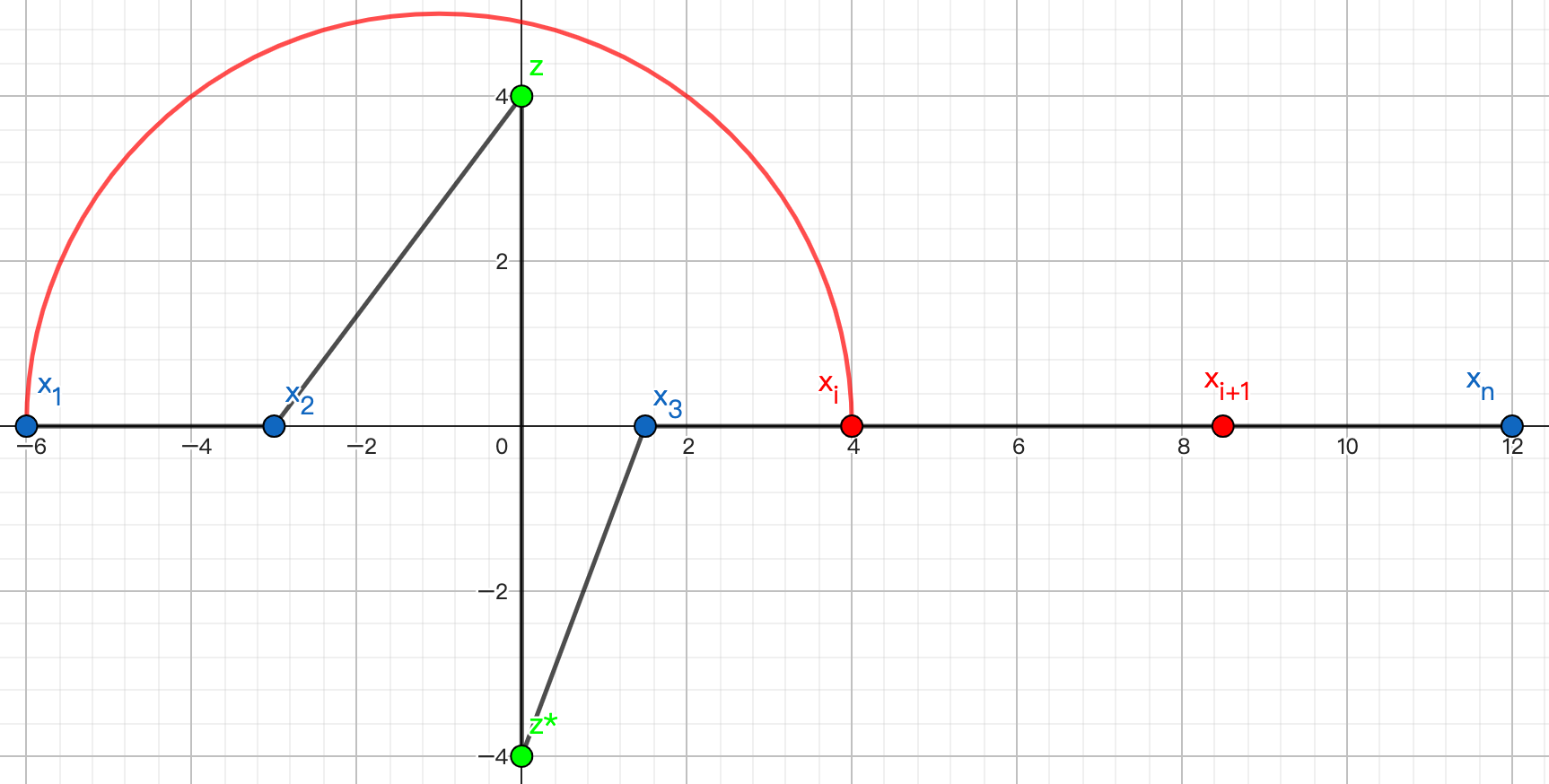}
\caption{Case 3}
\end{figure}
When either $x_i$ or $x_{i+1}$ is an endpoint of a single integration contour $\Gamma_1$ in $\mathcal{J}^{(m,n)}_\alpha(\boldsymbol{x})$, the asymptotic analysis proceeds as follows. Without loss of generality, we consider $x_i$ as the contour endpoint (the case with $x_{i+1}$ as endpoint follows by symmetry). The contour configuration depends on $\kappa$:

\[
\Gamma_1'' = \begin{cases}
\mathscr{P}(x_{i-1},x_i) & \text{for } 0<\kappa\leq4 \\
[x_{i-1},x_i]^+ & \text{for } 4<\kappa<8
\end{cases}
\]

where $[x_{i-1},x_i]^+$ denotes a slight deformation into the upper half-plane and $\mathscr{P}(x_{i-1},x_i)$ is the Pochhammer contour. The relevant integral takes the form:

\begin{equation}
\mathcal{J}^{(1,K)}\left(\{\beta_j\}|\Gamma_1''|x_1,\ldots,x_K\right) = \int_{\Gamma_1''} \mathcal{N}\left[\prod_{j=1}^K (u_1-x_j)^{\beta_j}\right] du_1
\end{equation}

with $K=m+n-1$ and exponents satisfying:

\begin{equation}
\begin{gathered}
s := \sum_{j=1}^K \beta_j = -2, \quad \beta_j \in \{-4/\kappa, 8/\kappa, (2n-4m+4-\kappa)/\kappa\} \\
\beta_{i-1} = \beta_i = \beta_{i+1} = -4/\kappa, \quad \beta_{i+2} \in \{-4/\kappa, (2n-4m+4-\kappa)/\kappa\}
\end{gathered}
\end{equation}

For $\kappa\in(4,8)$, we analyze the definite integrals:

\begin{equation}
I_k(\boldsymbol{x}) := \int_{x_k}^{x_{k+1}} \mathcal{N}\left[\prod_{j=1}^K (u_1-x_j)^{\beta_j}\right] du_1
\end{equation}

Using contour deformation with a large semicircle (radius $R\to\infty$) in the upper/lower half-plane, we obtain the key relation:

\begin{equation}\label{integral identity for Ik}
\sum_{k=1}^K e^{\pm\pi i\sum_{l=1}^k \beta_l} I_k = 0
\end{equation}

\begin{remark}
    Without loss of generality, we may assume that all points $x_1, x_2, \ldots, x_K$ lie on the real line. When the configuration is not collinear, equation~\eqref{integral identity for Ik} still holds up to a multiplicative algebraic factor of the form $e^{2i(\theta_k - \pi)}$ for each term $I_k$, where $\theta_k$ denotes the angle between the two consecutive line segments meeting at $x_k$.
\end{remark}

Solving for $I_{i-1}$ yields:

\begin{equation}
I_{i-1} = -\sum_{k=1}^{i-2} \frac{\sin\pi\sum_{l=k+1}^{i+1}\beta_l}{\sin\pi(\beta_i+\beta_{i+1})} I_k + \sum_{k=i+2}^K \frac{\sin\pi\sum_{l=i+2}^k\beta_l}{\sin\pi(\beta_i+\beta_{i+1})} I_k - \frac{\sin\pi\beta_{i+1}}{\sin\pi(\beta_i+\beta_{i+1})} I_i
\end{equation}

The dominant asymptotic behavior comes from:

\begin{equation}
I_{i-1} \sim -\frac{\sin\pi\beta_{i+1}\Gamma(\beta_i+1)\Gamma(\beta_{i+1}+1)}{\sin\pi(\beta_i+\beta_{i+1})\Gamma(\beta_i+\beta_{i+1}+2)} (x_{i+1}-x_i)^{\beta_i+\beta_{i+1}+1} \mathcal{N}\left[\prod_{j\neq i,i+1}^K (x_i-x_j)^{\beta_j}\right]
\end{equation}

For $\kappa\in(0,4]$, we use the analytically continued form:

\begin{equation}
\int_{x_k}^{x_{k+1}} \mapsto \frac{1}{4e^{\pi i(\beta_k-\beta_{k+1})}\sin\pi\beta_k\sin\pi\beta_{k+1}} \oint_{\mathscr{P}(x_k,x_{k+1})}
\end{equation}

which gives:

\begin{equation}
\mathcal{J}^{(1,K)} \sim 4e^{\pi i(\beta_{i-1}-\beta_i)}\sin\pi\beta_{i-1}\sin\pi\beta_i \cdot \text{(same asymptotic form as above)}
\end{equation}

The final result for both $\kappa$ ranges is:

\begin{equation}
\lim_{x_i,x_{i+1}\to p} (x_{i+1}-x_i)^{6/\kappa-1} \mathcal{J}^{(m,n)}_\alpha(\boldsymbol{x}) = \mathcal{J}^{(m-1,n)}_{\hat{\alpha}}(\boldsymbol{\hat{x}})
\end{equation}

The factor $n(\kappa)^{-1}$ emerges naturally from the product of phase factors and gamma functions in the asymptotic expansion. This result shows how the Coulomb gas integral reduces when one point in the collapsing pair is a contour endpoint while the other is not, with the specific form depending crucially on whether $\kappa$ is above or below 4.

\subsection{Case 4: $x_i$, $x_{i+1}$ are endpoints of two distinct contour} \label{case4}
\begin{figure}[h]
\centering
\includegraphics[width=15cm]{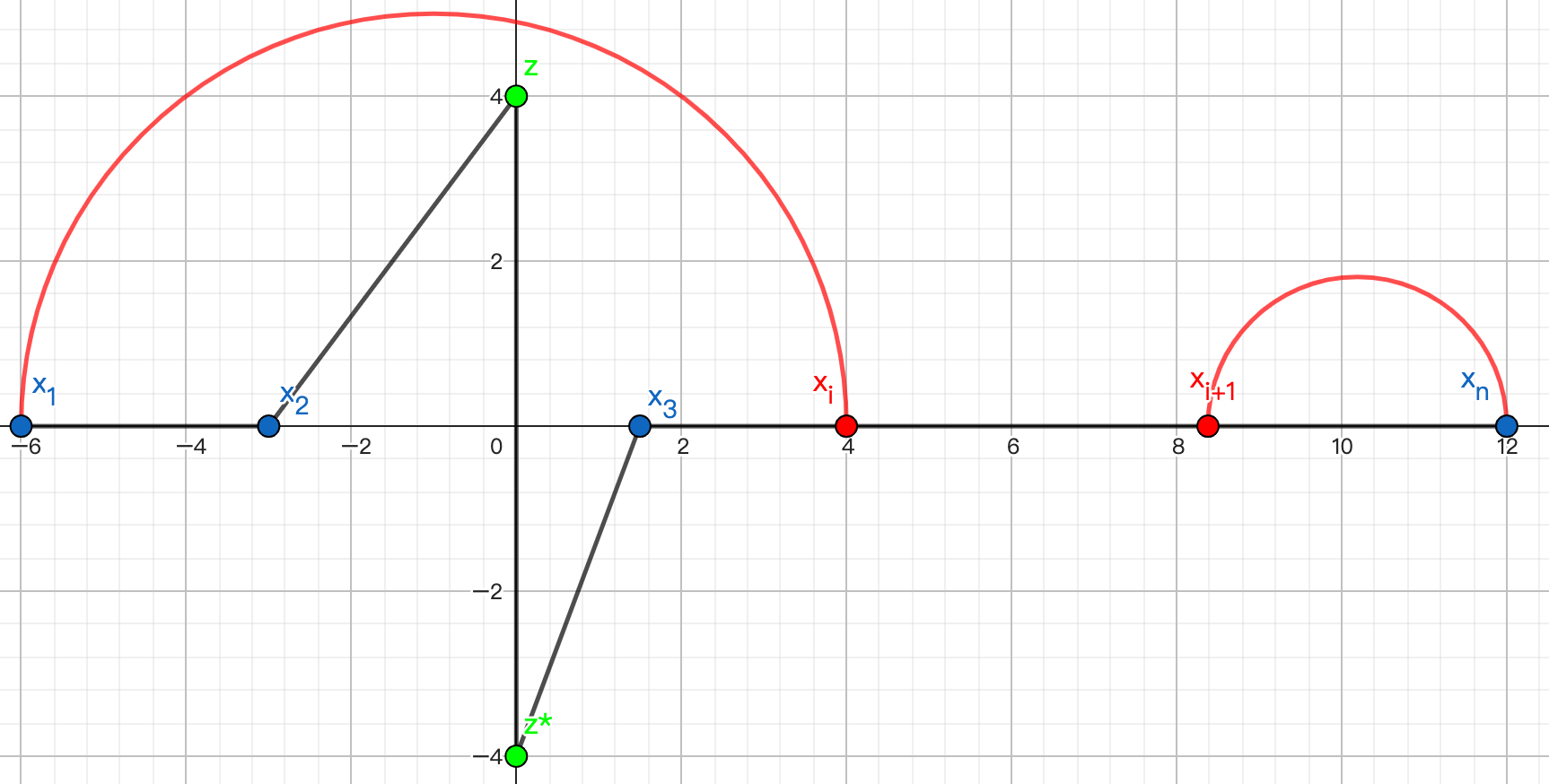}
\caption{Case 4}
\end{figure}
We analyze the asymptotic behavior of the Coulomb gas integral $\mathcal{J}^{(m,n)}(\boldsymbol{x})$ as $x_{i+1}\to x_i$ when $x_i$ is an endpoint of contour $\Gamma_1$ and $x_{i+1}$ is an endpoint of a different contour $\Gamma_2$. For $\kappa>4$ with $\Gamma_1$ and $\Gamma_2$ being simple contours, we decompose:

\[
\Gamma_1 = \begin{cases}
\Gamma_1' \text{ (endpoints at $x_{i-1}$)} \\
\Gamma_1'' \text{ (endpoints at $x_{i-1},x_i$)}
\end{cases}, \quad
\Gamma_2 = \begin{cases}
\Gamma_2' \text{ (endpoints at $x_{i+2}$)} \\
\Gamma_2'' \text{ (endpoints at $x_{i+1},x_{i+2}$)}
\end{cases}
\]

where $\Gamma_1'$ or $\Gamma_2'$ may be empty. The exact form depends on $\kappa$:

\[
\Gamma_1'' = \begin{cases}
\mathscr{P}(x_{i-1},x_i), & 0<\kappa\leq4 \\
[x_{i-1},x_i]^+, & 4<\kappa<8
\end{cases}, \quad
\Gamma_2'' = \begin{cases}
\mathscr{P}(x_{i+1},x_{i+2}), & 0<\kappa\leq4 \\
[x_{i+1},x_{i+2}]^+, & 4<\kappa<8
\end{cases}
\]

Using Fubini's theorem, we first integrate $u_1$ along $\Gamma_1$, then $u_2$ along $\Gamma_2$, followed by $u_3,\ldots,u_m$ along their respective contours. For $l>2$, $(x_{i+1}-u_l)^{-4/\kappa}\to(x_i-u_l)^{-4/\kappa}$ uniformly as $x_{i+1}\to x_i$, so the asymptotic behavior is determined by:

\[
\mathcal{J}^{(2,K)}\left(\{\beta_j\};\gamma|\Gamma_1'',\Gamma_2''|x_1,\ldots,x_K\right) = \iint_{\Gamma_1''\times\Gamma_2''} \mathcal{N}\left[\prod_{j=1}^K (u_1-x_j)^{\beta_j}(u_2-x_j)^{\beta_j}(u_2-u_1)^\gamma\right]du_2 du_1
\]

where $K=m+n-2$ and $x_j\notin(x_{i-1},x_{i+2})$. The exponents satisfy:

\[
\begin{gathered}
s := \sum_{j=1}^K \beta_j + \gamma = -2 \\
\beta_j \in \{-4/\kappa, 8/\kappa, (2n-4m+4-\kappa)/\kappa\} \\
\beta_i = \beta_{i+1} = -4/\kappa, \quad \beta_{i+2} \in \{-4/\kappa, (2n-4m+4-\kappa)/\kappa\}, \quad \gamma = 8/\kappa
\end{gathered}
\]

For $\kappa\in(4,8)$, these imply $s \in \mathbb{Z}^{-}\setminus\{-1\}$, $\beta_i+\beta_{i+1}+\gamma/2 \notin\mathbb{Z}$, $\beta_j > -1$ for all $j$, $\beta_i+\beta_{i+1} < -1$, and $\beta_i+\gamma/2 = 0$.

We define the integrals for $m,k\in\{1,\ldots,K\}$:

\[
I_{m,k}(\boldsymbol{x}) := \int_{x_m}^{x_{m+1}}\int_{x_k}^{x_{k+1}} \mathcal{N}[\cdots] du_2 du_1
\]

with the convention for $k=K$:

\[
\int_{x_K}^{x_{K+1}} := \int_{x_K}^\infty + \int_{-\infty}^{x_1}
\]

For the diagonal terms ($m=k$):

\[
I_{k,k}(\boldsymbol{x}) := \int_{x_{k-1}}^{x_k}\int_{x_{k-1}}^{u_1} \mathcal{N}[\cdots] du_2 du_1 + \int_{x_{k-1}}^{x_k}\int_{u_1}^{x_k} \mathcal{N}[\cdots] du_2 du_1
\]

Using a large semicircular contour (radius $R\to\infty$) in the upper/lower half-plane:

\[ \label{integral identity for Ijk}
\sum_{k=1}^{i-2} e^{\pm\pi i\sum_{l=1}^k \beta_l} I_{i-1,k} + e^{\pm\pi i\sum_{l=1}^{i-1}\beta_l}(1+e^{\pm\pi i\gamma})I_{i-1,i-1} + \sum_{k=i}^K e^{\pm\pi i(\sum_{l=1}^k \beta_l+\gamma)} I_{i-1,k} = 0
\]
\begin{remark}
    Without loss of generality, we may assume that all points $x_1, x_2, \ldots, x_K$ lie on the real line. When the points are not collinear, equation~\eqref{integral identity for Ijk} still holds up to a multiplicative algebraic factor of the form $e^{2i(\theta_j + \theta_k - 2\pi)}$ for each term $I_{j,k}$, where $\theta_j$ and $\theta_k$ denote the angles between two consecutive line segments at $x_j$ and $x_k$, respectively, in the configuration.
\end{remark}

Solving these equations yields:

\begin{equation} 
\begin{aligned}
I_{i-1,i+1} = & \sum_{k=1}^{i-2} \frac{\sin\pi(\sum_{l=k+1}^{i-1}\beta_l+\gamma/2)}{\sin\pi(\beta_i+\beta_{i+1}+\gamma/2)} I_{i-1,k} \\
& - \sum_{k=i+2}^K \frac{\sin\pi(\sum_{l=i}^k\beta_l+\gamma/2)}{\sin\pi(\beta_i+\beta_{i+1}+\gamma/2)} I_{i-1,k} \\
& - \frac{\sin\pi(\beta_i+\gamma/2)}{\sin\pi(\beta_i+\beta_{i+1}+\gamma/2)} I_{i-1,i}
\end{aligned}
\end{equation}

For $k=i$, $I_{i-1,i}$ contains divergent terms ($\beta_i+\beta_{i+1}<-1$) but its coefficient vanishes due to $\beta_i+\gamma/2=0$. For $k\notin\{i,i\pm1\}$:

\[
I_{i-1,k} \sim -\frac{\sin\pi\beta_{i+1}}{\sin\pi(\beta_i+\beta_{i+1})} I_{i,k}
\]

where $I_{i,k}$ falls under Case 2 and diverges as $(x_{i+1}-x_i)^{\beta_i+\beta_{i+1}+1}$.

After careful analysis of all terms, we obtain:

\[
\begin{aligned}
I_{i-1,i+1} &\sim -\frac{\sin\pi\beta_{i+1}\Gamma(\beta_i+1)\Gamma(\beta_{i+1}+1)}{\sin\pi(\beta_i+\beta_{i+1})\Gamma(\beta_i+\beta_{i+1}+2)} (x_{i+1}-x_i)^{\beta_i+\beta_{i+1}+1} \\
&\quad \times \mathcal{N}\left[\prod_{j\neq i,i+1}^K (x_i-x_j)^{\beta_j}\right] \\
&\quad \times \left[\sum_{k=1}^{i-2} \frac{\sin\pi(\sum_{l=k+1}^{i-1}\beta_l+\gamma/2)}{\sin\pi(\beta_i+\beta_{i+1}+\gamma/2)} - \sum_{k=i+2}^K \frac{\sin\pi(\sum_{l=i}^k\beta_l+\gamma/2)}{\sin\pi(\beta_i+\beta_{i+1}+\gamma/2)}\right] \\
&\quad \times \int_{x_k}^{x_{k+1}} \mathcal{N}\left[(u_2-x_i)^{\beta_i+\beta_{i+1}+\gamma} \prod_{j\neq i,i+1}^K (u_2-x_j)^{\beta_j}\right] du_2
\end{aligned}
\]

Introducing the joined contour $\Gamma_0' = [x_{i-1},x_{i+2}]^+$, we obtain the cleaner asymptotic form:

\[
\begin{aligned}
\mathcal{J}^{(2,K)} &\sim \frac{\sin\pi\beta_{i+1}\Gamma(\beta_i+1)\Gamma(\beta_{i+1}+1)}{\sin\pi(\beta_i+\beta_{i+1})\Gamma(\beta_i+\beta_{i+1}+2)} (x_{i+1}-x_i)^{\beta_i+\beta_{i+1}+1} \\
&\quad \times \mathcal{N}\left[\prod_{j\neq i,i+1}^K (x_i-x_j)^{\beta_j}\right] \int_{x_{i-1}}^{x_{i+2}} \mathcal{N}\left[\prod_{j\neq i,i+1}^K (u_2-x_j)^{\beta_j}\right] du_2
\end{aligned}
\]

For $\kappa\leq4$, we use the analytically continued form:

\[
\int_{x_k}^{x_{k+1}} \mapsto \frac{1}{4e^{\pi i(\beta_k-\beta_{k+1})}\sin\pi\beta_k\sin\pi\beta_{k+1}} \oint_{\mathscr{P}(x_k,x_{k+1})}
\]

which yields:

\[
\begin{aligned}
\mathcal{J}^{(2,K)} &\sim 4e^{\pi i(\beta_{i-1}-\beta_i)}\sin\pi\beta_{i-1}\sin\pi\beta_i \\
&\quad \times \frac{\sin\pi\beta_{i+1}\Gamma(\beta_i+1)\Gamma(\beta_{i+1}+1)}{\sin\pi(\beta_i+\beta_{i+1})\Gamma(\beta_i+\beta_{i+1}+2)} (x_{i+1}-x_i)^{\beta_i+\beta_{i+1}+1} \\
&\quad \times \mathcal{N}\left[\prod_{j\neq i,i+1}^K (x_i-x_j)^{\beta_j}\right]
\end{aligned}
\]

Combining all cases, the final asymptotic behavior is:

\[
\lim_{x_i,x_{i+1}\to p} (x_{i+1}-x_i)^{6/\kappa-1} \mathcal{J}^{(m,n)}_\alpha(\boldsymbol{x}) = \mathcal{J}^{(m-1,n-2)}_{\hat{\alpha}}(\boldsymbol{\hat{x}})
\]

\subsection{Case 5: $x_i$, $x_{i+1}$ as endpoints of single contour with complementary arc collapse} \label{case5}
\begin{figure}[h]
\centering
\includegraphics[width=15cm]{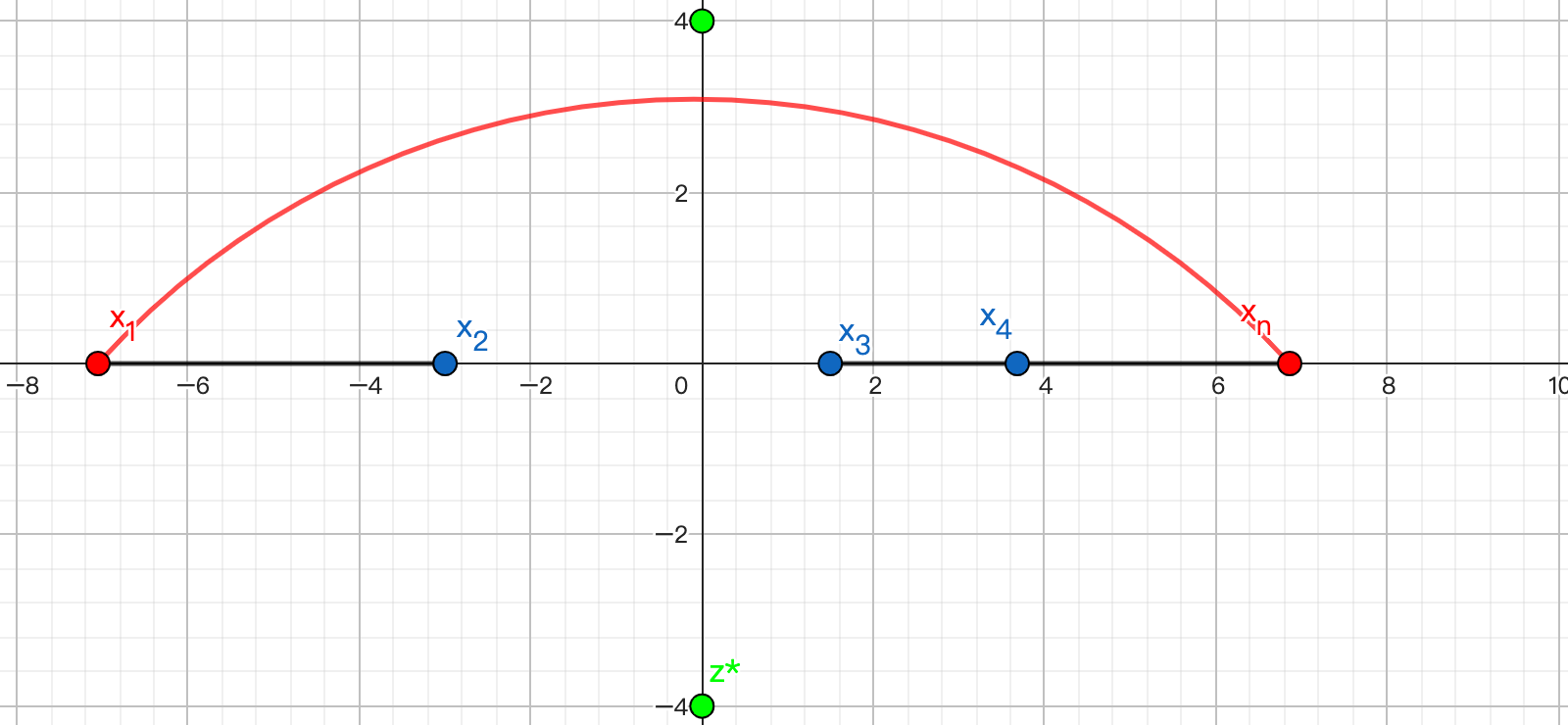}
\caption{Case 5}
\end{figure}

When \( x_i \) and \( x_{i+1} \) are endpoints of the same integration contour \( \Gamma_1 \), but we consider their collapse along the complementary arc. 

Without loss of generality, we may assume that \( x_1 = -R \) and \( x_n = R \) for large \( R \), and study the asymptotic behavior as \( R \to \infty \). In this case, the Coulomb gas integral \( \mathcal{J}^{(m,n)}_\alpha(\boldsymbol{x}) \) admits the following decomposition:
\begin{equation} \label{eq:case5-decomposition}
\begin{split}
\lim_{R\to\infty} (2R)^{6/\kappa - 1} \mathcal{J}^{(m,n)}_\alpha(\boldsymbol{x}) = \;& 
\lim_{R\to\infty} (2R)^{6/\kappa - 1} \left.\mathcal{J}^{(m,n)}_\alpha\right|_{\Gamma_1 \to \Gamma_1'} \\
& + \lim_{R\to\infty} (2R)^{6/\kappa - 1} \left.\mathcal{J}^{(m,n)}_\alpha\right|_{\Gamma_1 \to \Gamma_1''} \\
& + \lim_{R\to\infty} (2R)^{6/\kappa - 1} \left.\mathcal{J}^{(m,n)}_\alpha\right|_{\Gamma_1 \to \Gamma_1'''}.
\end{split}
\end{equation}

With $\kappa>4$ and \( \Gamma_1 \) simple contours, we decompose $\Gamma_1$ as \( \Gamma_1' \): an arc connecting \( (x_1, x_2) \), \( \Gamma_1'' \): the portion of \( \Gamma_1 \) running from \( x_2 \) to \( x_{n-1} \), \( \Gamma_1''' \): the arc connecting \( (x_{n-1}, x_n) \).

\noindent\textbf{Key Observations:}

1. The middle term vanishes:
\begin{equation}
\lim_{R\rightarrow\infty} (2R)^{6/\kappa-1} \left.\mathcal{J}^{(m,n)}_\alpha\right|_{\Gamma_1\to\Gamma_1''} = 0
\end{equation}

2. The remaining terms both reduce to Case 3 configurations, yielding:
\begin{equation}
\lim_{R\rightarrow\infty} (2R)^{6/\kappa-1} \mathcal{J}^{(m,n)}_\alpha(\boldsymbol{x}) = 2 \mathcal{J}^{(m-1,n)}_{\hat{\alpha}}(\boldsymbol{\hat{x}})
\end{equation}

For $\kappa > 4$, the contour behavior is:
\[
\Gamma_1 = [x_i,x_{i+1}]^+ \quad 
\]

For $0 < \kappa \leq 4$, we use the analytically continued version:
\[
\Gamma_1 = \mathscr{P}(x_i,x_{i+1}) \quad 
\]

The factor of 2 arises because:
\begin{itemize}
    \item Both $\Gamma_1'$ and $\Gamma_1'''$ contribute equally
    \item Each produces a $\mathcal{J}^{(m-1,n)}_{\hat{\alpha}}$ term
\end{itemize}

The same decomposition applies in the general setting of collapsing \( x_i \) and \( x_{i+1} \), yielding:
\begin{equation}
\begin{aligned}
\lim _{x_{i}, x_{i+1}\rightarrow p} (x_{i+1} - x_i)^{6/\kappa - 1} \mathcal{J}^{(m,n)}_\alpha(\boldsymbol{x}) = \;& 
\lim_{x_{i+1} \to x_i} (x_{i+1} - x_i)^{6/\kappa - 1} \left.\mathcal{J}^{(m,n)}_\alpha\right|_{\Gamma_1 \mapsto \Gamma_1'} \\
& + \lim_{x_{i+1} \to x_i} (x_{i+1} - x_i)^{6/\kappa - 1} \left.\mathcal{J}^{(m,n)}_\alpha\right|_{\Gamma_1 \mapsto \Gamma_1''} \\
& + \lim_{x_{i+1} \to x_i} (x_{i+1} - x_i)^{6/\kappa - 1} \left.\mathcal{J}^{(m,n)}_\alpha\right|_{\Gamma_1 \mapsto \Gamma_1'''}.
\end{aligned}
\end{equation}

\section{Temperley-Lieb type algebras and meander matrix}

The contents of this chapter are based on the works \cite{RSA14,MDSA13}. For detailed definitions, properties, and proofs related to Temperley--Lieb type algebras, we refer the reader to these references.

\subsection{Temperley--Lieb Algebras and Standard Modules}

Let $q \in \mathbb{C}^\times$ be a nonzero complex parameter and set $\delta := q + q^{-1}$. The \emph{Temperley--Lieb algebra} $\mathrm{TL}_n(\beta)$ is the unital associative algebra over $\mathbb{C}$ generated by $\{1, u_1, u_2, \dots, u_{n-1} \}$, subject to the relations:

\begin{defn}[Temperley--Lieb algebra]
The algebra $\mathrm{TL}_n(\beta)$ satisfies:
\[
u_i^2 = \beta u_i, \quad u_i u_{i\pm 1} u_i = u_i, \quad u_i u_j = u_j u_i \quad \text{for } |i - j| > 1.
\]
\end{defn}

\paragraph{Diagrammatic interpretation.}  
Elements of $\mathrm{TL}_n(\beta)$ admit a basis consisting of planar, non-crossing pairings of $2n$ points arranged on two rows. The multiplication is defined via vertical concatenation, replacing each resulting closed loop with the scalar $\beta$.

\begin{defn}[Link states and module $M_n$]
A \emph{link state} on $n$ nodes is a non-crossing pairing of the $n$ points into $p$ arcs (called links) and $n - 2p$ unpaired points (called defects). The vector space $M_n$ is the span over $\mathbb{C}$ of all such link states. Define the subspaces
\[
M_{n,p} := \text{span of link states with at least } p \text{ arcs},
\]
which yield the descending filtration:
\[
0 \subset M_{n,\lfloor n/2 \rfloor} \subset \cdots \subset M_{n,1} \subset M_{n,0} = M_n.
\]
\end{defn}

\begin{defn}[Standard module]
The standard module $V_{n,p}$ is defined as the graded piece
\[
V_{n,p} := M_{n,p} / M_{n,p+1},
\]
which is a representation of $\mathrm{TL}_n(\beta)$ via diagram concatenation. Its dimension is given by:
\[
\dim V_{n,p} = d_{n,p} = \binom{n}{p} - \binom{n}{p-1},
\]
the $(n,p)$-entry of the Catalan triangle.
\end{defn}

\begin{defn}[Radical and irreducible quotient]
Let $\langle \cdot, \cdot \rangle_{n,p}$ be the invariant bilinear form on $V_{n,p}$ defined below. The \emph{radical} of $V_{n,p}$ is the null space of this form:
\[
R_{n,p} := \{ x \in V_{n,p} \mid \langle x, y \rangle = 0 \text{ for all } y \in V_{n,p} \}.
\]
The \emph{irreducible quotient} of $V_{n,p}$ is
\[
L_{n,p} := V_{n,p} / R_{n,p}.
\]
\end{defn}

\subsection{Inner Product on Standard Modules and Meander Matrix}

\begin{defn}[Inner product on standard module]
Let $x, y \in V_{n,p}$ be two $(n,p)$-link states. Define $\langle x, y \rangle_{n,p}$ as follows:
\begin{enumerate}
    \item Reflect $x$ across a vertical axis (mirror image).
    \item Glue $x$ to $y$ vertically, forming a closed link diagram.
    \item If all defects pair, and $m$ closed loops are formed, then
    \[
    \langle x, y \rangle_{n,p} := \delta^p.
    \]
    \item Otherwise, if any defects remain unpaired, then $\langle x, y \rangle_{n,p} := 0$.
\end{enumerate}
\end{defn}

\begin{thm}[Invariance under TL action]
The bilinear form $\langle \cdot, \cdot \rangle_{n,p}$ is symmetric and $\mathrm{TL}_n$-invariant:
\[
\langle Ux, y \rangle = \langle x, U^\dagger y \rangle, \quad \forall U \in \mathrm{TL}_n,
\]
where the involution $U^\dagger$ reflects the diagram horizontally (reversing composition order).
\end{thm}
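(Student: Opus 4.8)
The plan is to reduce everything to one diagrammatic principle: the number of closed loops and the defect-matching pattern of a planar diagram built by concatenating tangles depend only on the resulting diagram, not on how its constituent pieces are grouped. Regard a link state on $n$ nodes as a planar tangle with $n$ endpoints on a single row (the $p$ arcs joining endpoints in pairs, the $n-2p$ defects left dangling), and let $x\mapsto x^\dagger$ be the reflection of step~(1) in the definition of $\langle\cdot,\cdot\rangle_{n,p}$, which turns such a ``ket'' tangle into the opposite ``bra'' tangle; the pairing $\langle x,y\rangle_{n,p}$ is then read off from the closed diagram obtained by gluing $x^\dagger$ to $y$. Extending $\dagger$ to tangles $n\to n$ gives the involution $U\mapsto U^\dagger$ of the theorem. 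Three elementary facts drive the argument: (i) reflection reverses the order of vertical concatenation, $\overline{AB}=\bar B\,\bar A$ and $(UV)^\dagger=V^\dagger U^\dagger$; (ii) each generator is reflection-invariant, $u_i^\dagger=u_i$, because the $u_i$-tangle is symmetric under exchanging its two rows; (iii) a reflection of the ambient plane is a homeomorphism, hence preserves connected components (loops) and endpoint matchings of any closed diagram.

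\emph{Symmetry.} Apply the global reflection to the closed diagram computing $\langle x,y\rangle_{n,p}$, which is $x^\dagger\cdot y$; by (i) and (iii) it becomes $y^\dagger\cdot (x^\dagger)^\dagger=y^\dagger\cdot x$, the closed diagram computing $\langle y,x\rangle_{n,p}$, with the same loop count and the same defect pairing. Hence $\langle x,y\rangle_{n,p}=\langle y,x\rangle_{n,p}$.

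\emph{Invariance.} Since $\mathrm{TL}_n(\delta)$ is generated by $1,u_1,\dots,u_{n-1}$ and the form is bilinear, it suffices (using (i)--(ii) and an induction on word length in the generators) to prove $\langle u_i x,y\rangle_{n,p}=\langle x,u_i y\rangle_{n,p}$. The left-hand side is read off from gluing $(u_i x)^\dagger=x^\dagger\,u_i^\dagger=x^\dagger\,u_i$ to $y$, i.e.\ from the diagram $x^\dagger\cdot u_i\cdot y$; the right-hand side from gluing $x^\dagger$ to $u_i y$, i.e.\ from the diagram $x^\dagger\cdot u_i\cdot y$ --- the very same planar diagram. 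So the two scalars coincide. Unwinding the induction, for $U=u_{i_1}\cdots u_{i_k}$ one obtains $\langle Ux,y\rangle=\langle x,u_{i_k}\cdots u_{i_1}y\rangle=\langle x,U^\dagger y\rangle$, and bilinearity extends this to every $U\in\mathrm{TL}_n(\delta)$.

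\textbf{Main obstacle.} The genuinely delicate point is not these manipulations but the passage to the quotient $V_{n,p}=M_{n,p}/M_{n,p+1}$: one must check that the form and the identity above are compatible with killing all link states of more than $p$ arcs. The reason this is automatic deserves to be written out. If $u_i$ acting on a $p$-arc link state caps two of its defects, the result lies in $M_{n,p+1}$, hence is $0$ in $V_{n,p}$; simultaneously, in the closed-diagram computation that same cap forces two defects of $x$ to join each other rather than to match defects of $y$, so rule~(4) of the definition returns $0$ on that term. Thus the raw diagrammatic pairing already respects the filtration, the identity descends to $V_{n,p}$, and nothing further is needed. I would present this compatibility as the one step requiring real care; the remainder is bookkeeping with planar tangles together with the standard fact that $(\cdot)^\dagger$ is an anti-automorphism of $\mathrm{TL}_n(\delta)$.
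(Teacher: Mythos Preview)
Your argument is correct and is the standard diagrammatic proof: symmetry via global reflection, invariance via associativity of concatenation together with $u_i^\dagger=u_i$, and the check that the pairing descends to the quotient $V_{n,p}$ because capping two defects simultaneously kills the term in the module action and in the diagrammatic pairing. Note, however, that the paper does not actually prove this theorem: the chapter opens by declaring that all definitions, properties, and proofs concerning Temperley--Lieb algebras are taken from \cite{RSA14,MDSA13}, and the statement is recorded without argument. So there is no ``paper's own proof'' to compare against; what you have written is precisely the kind of argument one finds in those references.
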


\begin{defn}[Gram matrix (Meander matrix)]
Let $\{x_1, \dots, x_{d_{n,p}}\}$ be a basis of $V_{n,p}$. The \emph{Gram matrix} $G_{n,p}$ is defined by:
\[
G_{n,p}(i,j) := \langle x_i, x_j \rangle.
\]
This matrix is symmetric and positive semidefinite. It is also referred to as the \emph{meander matrix}, due to its connection with enumerative problems involving non-crossing paths (meanders).
\end{defn}

\begin{thm}[Radical as Gram kernel]
The radical $R_{n,p}$ is the kernel of the Gram matrix $G_{n,p}$:
\[
R_{n,p} = \ker G_{n,p}.
\]
\end{thm}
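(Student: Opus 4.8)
The plan is to unwind the definition of the radical and recognize the claim as a plain linear-algebra statement about the Gram matrix of the bilinear form $\langle\cdot,\cdot\rangle_{n,p}$. Fix the basis $\{x_1,\dots,x_{d_{n,p}}\}$ of $V_{n,p}$ used to define $G_{n,p}$, and identify a general element $x=\sum_{i=1}^{d_{n,p}} a_i x_i$ with its coordinate vector $\mathbf{a}=(a_1,\dots,a_{d_{n,p}})^{\mathsf T}\in\mathbb{C}^{d_{n,p}}$. First I would record that, as established in the preceding subsection, the diagrammatic pairing descends to a well-defined bilinear form on the quotient $V_{n,p}=M_{n,p}/M_{n,p+1}$; hence for each basis vector $x_j$ one has
\[
\langle x, x_j \rangle_{n,p} \;=\; \sum_{i=1}^{d_{n,p}} a_i\,\langle x_i, x_j \rangle_{n,p} \;=\; \sum_{i=1}^{d_{n,p}} a_i\, G_{n,p}(i,j) \;=\; \bigl(G_{n,p}^{\mathsf T}\mathbf{a}\bigr)_j .
\]

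Next, by bilinearity the defining condition of $R_{n,p}$, namely that $\langle x,y\rangle_{n,p}=0$ for all $y\in V_{n,p}$, is equivalent to the finite system $\langle x,x_j\rangle_{n,p}=0$ for $j=1,\dots,d_{n,p}$, because the $x_j$ span $V_{n,p}$. Combining this with the displayed identity, $x\in R_{n,p}$ if and only if $G_{n,p}^{\mathsf T}\mathbf{a}=0$. Since the form is symmetric by the invariance theorem, the matrix $G_{n,p}$ is symmetric, so $G_{n,p}^{\mathsf T}=G_{n,p}$ and the condition becomes $G_{n,p}\mathbf{a}=0$, i.e.\ $\mathbf{a}\in\ker G_{n,p}$. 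Transporting back through the basis identification yields $R_{n,p}=\ker G_{n,p}$, and hence also $\dim L_{n,p}=\dim V_{n,p}-\dim R_{n,p}=\operatorname{rank} G_{n,p}$.

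There is no substantive obstacle here: the statement is essentially a reformulation of the definition of $R_{n,p}$ once the invariant form is represented by its Gram matrix, and the only inputs needed — that $\langle\cdot,\cdot\rangle_{n,p}$ is well defined on $V_{n,p}$ (independent of the choice of link-state representatives) and symmetric — are already supplied by the earlier development. If anything, the mild point to be careful about is that the equivalence between the $V_{n,p}$-level condition and the finitely many basis conditions uses only the span property, not linear independence of the $x_j$, so the argument is insensitive to the precise normalization $\delta^p$ versus $\delta^{l(\alpha,\beta)}$ appearing in the pairing rule.
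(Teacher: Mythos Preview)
Your argument is correct: this is exactly the standard linear-algebra unwinding of the definition of the radical via the coordinate representation of the bilinear form, and the only substantive inputs (well-definedness of $\langle\cdot,\cdot\rangle_{n,p}$ on the quotient and symmetry) are indeed supplied earlier. The paper itself does not prove this theorem; it is stated without proof in a review section that defers all proofs to \cite{RSA14,MDSA13}, so there is no in-paper argument to compare against, but your proof is the expected one.
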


\begin{example}[Gram matrices for $n=4$]
Let $n = 4$. The three standard modules $V_{4,p}$ for $p = 0, 1, 2$ have Gram matrices:
\[
G_{4,0} = (1), \quad
G_{4,1} =
\begin{pmatrix}
\beta & 1 & 0 \\
1 & \beta & 1 \\
0 & 1 & \beta
\end{pmatrix}, \quad
G_{4,2} =
\begin{pmatrix}
\delta^2 & \beta \\
\beta & \delta^2
\end{pmatrix}.
\]
\end{example}

\begin{defn}[Irreducibility criterion]
The standard module $V_{n,p}$ is irreducible if and only if the Gram matrix is non-degenerate, i.e.,
\[
\det G_{n,p} \ne 0.
\]
If $\det G_{n,p} = 0$, then $V_{n,p}$ is reducible, and its radical
\[
R_{n,p} := \{x \in V_{n,p} \mid \langle x, y \rangle = 0, \forall y \in V_{n,p}\}
\]
is a nonzero proper submodule.
\end{defn}

\begin{defn}[$q$-integer and criticality]
The $q$-integer is defined for any $m \in \mathbb{Z}_{>0}$ by:
\[
[m]_q := \frac{q^m - q^{-m}}{q - q^{-1}}.
\]
We say that a pair $(n, p)$ is \emph{critical} if
\[
[n - 2p + 1]_q = 0 \quad \text{or equivalently,} \quad q^{2(n - 2p + 1)} = 1.
\]
Critical pairs correspond to values of $q$ for which the Gram determinant may vanish.
\end{defn}

\begin{thm}[Recursive formula for $\det G_{n,p}$]
Let $q \in \mathbb{C}^\times$ and $\beta = q + q^{-1}$. Suppose $(n,p)$ is not critical, i.e., $[n - 2p + 1]_q \ne 0$. Then the Gram determinant satisfies the recurrence:
\[
\det G_{n,p} = \left( [n - 2p + 1]_q \right)^{d_{n-1,p-1}} \cdot \det G_{n-1,p} \cdot \det G_{n-1,p-1},
\]
where $d_{n-1,p-1} = \dim V_{n-1,p-1} = \binom{n-1}{p-1} - \binom{n-1}{p-2}$.
\end{thm}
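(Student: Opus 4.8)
The plan is to prove the recurrence by induction on $n$ via the \textbf{last-node decomposition} of the standard module, combined with a Schur-complement (block Gaussian elimination) computation of $\det G_{n,p}$; the Chebyshev factor $[n-2p+1]_q$ will emerge from closing the last strand around the column of through-lines (defects), which is exactly the partial-trace of a Jones--Wenzl projector on $n-2p$ strands. For $n=1$, or $p=0$, the module is one-dimensional with Gram matrix $(1)$, so the recurrence holds trivially with the usual empty-matrix conventions ($\det G_{n',p'}=1$ when $d_{n',p'}=0$). Since the entries of $G_{n,p}$ are monomials in $\delta=q+q^{-1}$, both sides of the asserted identity are Laurent polynomials in $q$; hence it suffices to prove it over the field $\mathbb{C}(q)$ with $q$ generic, and then specialise. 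Over $\mathbb{C}(q)$ every pair is non-critical and, by the inductive hypothesis, every smaller Gram determinant is nonzero, so no invertibility is lost — this is how the hypothesis $[n-2p+1]_q\neq 0$ gets circumvented in the intermediate steps.

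\textbf{Step 1 (blocking by the last node).} Split the link-state basis of $V_{n,p}$ into the set $\mathcal{A}$ of states in which node $n$ is a defect and the set $\mathcal{B}$ of states in which node $n$ is the right endpoint of an arc. Deleting node $n$ identifies $\mathcal{A}$ with the basis of $V_{n-1,p}$. For $\mathcal{B}$, non-crossingness forces the partner $j$ of node $n$ to be the position of the last defect of the link state obtained by deleting node $n$ and recolouring the arc $(j,n)$ as a defect at $j$; this identifies $\mathcal{B}$ with the basis of $V_{n-1,p-1}$. Thus $|\mathcal{A}|=d_{n-1,p}$, $|\mathcal{B}|=d_{n-1,p-1}$, consistent with the Catalan-triangle identity $d_{n,p}=d_{n-1,p}+d_{n-1,p-1}$. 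Ordering the basis as $\mathcal{A}$ then $\mathcal{B}$ writes $G_{n,p}=\left(\begin{smallmatrix} P & Q\\ Q^{\mathsf T} & S\end{smallmatrix}\right)$ with $P$ of size $|\mathcal{A}|$.

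\textbf{Step 2 (the diagonal block).} For $x,y\in\mathcal{A}$ the shared node $n$ carries a defect on both sides; in the glued diagram it is the rightmost through-line and joins straight across, creating no loop and leaving the rest of the picture equal to the gluing of the two corresponding $V_{n-1,p}$ link states. Hence $P=G_{n-1,p}$, invertible over $\mathbb{C}(q)$ by induction.

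\textbf{Step 3 (the Schur complement).} Set $S':=S-Q^{\mathsf T}P^{-1}Q$, so that $\det G_{n,p}=\det P\cdot\det S'=\det G_{n-1,p}\cdot\det S'$. The key claim is
\[
S'=[\,n-2p+1\,]_q\cdot C,
\]
where, under the identification of Step 1, $C$ equals $G_{n-1,p-1}$ up to a fixed (triangular) change of basis. The point is that subtracting $Q^{\mathsf T}P^{-1}Q$ exactly removes the contributions in which a through-line ``turns back'' at the right end: equivalently it replaces the naive cap at node $n$ by the renormalised Jones--Wenzl cap sitting on the $n-2p$ remaining through-lines. Sliding that cap through the through-line column, using $u_i^2=\delta u_i$ and $u_iu_{i\pm1}u_i=u_i$ repeatedly, produces the Chebyshev recursion $[m+1]_q=\delta[m]_q-[m-1]_q$ with $m=n-2p$ (i.e. the closed value of $m$ parallel strands carrying $\mathrm{JW}_m$), which is the scalar $[n-2p+1]_q$, uniformly in the $\mathcal{B}$-indices; what remains is the $V_{n-1,p-1}$ gluing. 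Granting this, $\det S'=[n-2p+1]_q^{\,d_{n-1,p-1}}\det G_{n-1,p-1}$, and combining with Step 2 gives $\det G_{n,p}=[n-2p+1]_q^{\,d_{n-1,p-1}}\det G_{n-1,p}\det G_{n-1,p-1}$ over $\mathbb{C}(q)$, hence as Laurent polynomials, hence at every non-critical $q$.

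\textbf{Main obstacle.} Everything except Step 3 is bookkeeping with the Catalan triangle and the branching $V_{n,p}\!\downarrow_{\mathrm{TL}_{n-1}}$. The real work is Step 3: proving that the Schur complement is a \emph{scalar} multiple of a Gram-congruent matrix and that the scalar is precisely $[n-2p+1]_q$. I expect the cleanest rigorous route is to exhibit explicit triangular change-of-basis matrices that ``diagonalise the last strand'' (the Jones--Wenzl correction of the $\mathcal{B}$-states), thereby reducing the entire computation to the one-variable Chebyshev identity; the non-degeneracy of $P$ over $\mathbb{C}(q)$ is exactly what makes these correction matrices well defined.
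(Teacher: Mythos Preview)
The paper offers no proof of this recursion; Section~5 is a summary quoted from \cite{RSA14,MDSA13}, so there is nothing to compare against on the paper's side. Your last--node block decomposition with a Schur--complement computation is exactly the standard route used in that literature, and Steps~1--2 (the branching $V_{n,p}\!\downarrow\cong V_{n-1,p}\oplus V_{n-1,p-1}$ and $P=G_{n-1,p}$) are correct.

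There is, however, a genuine error in Step~3. You identify the scalar coming from the Jones--Wenzl mechanism as the \emph{full} Markov trace of $\mathrm{JW}_m$ on $m=n-2p$ strands, namely $[m+1]_q=[n-2p+1]_q$. But forming the Schur complement corresponds to capping only the last strand: this is the \emph{partial} trace of $\mathrm{JW}_m$, which is $\dfrac{[m+1]_q}{[m]_q}\,\mathrm{JW}_{m-1}$, not $[m+1]_q$. So your heuristic ``closed value of $m$ parallel strands'' is the wrong invariant for this step, and the asserted identity $S'=[n-2p+1]_q\cdot C$ with $\det C=\det G_{n-1,p-1}$ fails.

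Concretely, the statement you are trying to prove is false as written. Take $n=3$, $p=1$: the two link states give
\[
G_{3,1}=\begin{pmatrix}\delta&1\\ 1&\delta\end{pmatrix},\qquad \det G_{3,1}=\delta^2-1=[3]_q,
\]
whereas the recursion predicts $[2]_q^{\,d_{2,0}}\det G_{2,1}\det G_{2,0}=[2]_q\cdot[2]_q\cdot 1=\delta^2$. The paper's own example $G_{4,1}$ also fails: $\det G_{4,1}=\delta^3-2\delta=[4]_q$, while the right side gives $[3]_q\cdot[3]_q\cdot 1=(\delta^2-1)^2$. Your Schur complement in this last case is the $1\times1$ scalar $\delta-\dfrac{\delta}{\delta^2-1}=\dfrac{[4]_q}{[3]_q}$, confirming that the factor produced is the ratio $[n-2p+2]_q/[n-2p+1]_q$, not $[n-2p+1]_q$. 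Thus the obstacle you flag in Step~3 is not merely ``the hard part'': carried out correctly it disproves the stated formula, and what you would actually obtain is the recursion with that ratio in place of the bare $q$-integer (equivalently, the product formula in the cited references).
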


\begin{defn}[Initial conditions]
The recurrence is initialized by the following:
\begin{itemize}
  \item For $p = 0$, the only basis vector is the fully defective link state, and $G_{n,0} = (1)$, so
  \[
  \det G_{n,0} = 1.
  \]
  \item For $n = 2p$, the standard module $V_{2p,p}$ is one-dimensional (a fully paired link state), and
  \[
  \det G_{2p,p} = \delta^p = (q + q^{-1})^p.
  \]
\end{itemize}
\end{defn}

\begin{example}[Case $p = 1$]
For $p = 1$, the recurrence reduces to
\[
\det G_{n,1} = [n - 1]_q \cdot \det G_{n-1,1},
\]
with $\det G_{2,1} = \delta = q + q^{-1}$. Iterating gives the closed formula:
\[
\det G_{n,1} = \prod_{j=1}^{n-2} [j]_q.
\]
This is the $q$-analog of $(n-2)!$.
\end{example}

\subsection{Affine Temperley--Lieb Algebra and Standard Modules}

We define the affine Temperley--Lieb algebra $\mathrm{aTL}_n(\delta)$, describe its diagrammatic realization, and introduce a family of standard modules equipped with a natural inner product. The determinant of the corresponding Gram matrix, known as the affine meander matrix, is also computed.

\begin{defn}[Affine Temperley--Lieb algebra]
Fix $n \geq 2$ and $q \in \mathbb{C}^\times$, and set $\delta := -q - q^{-1}$. The affine Temperley--Lieb algebra $\mathrm{aTL}_n := \mathrm{aTL}_n(\delta)$ is the unital associative algebra generated by $\{\Omega_n, \Omega_n^{-1}, \mathrm{id}, e_0, e_1, \dots, e_{n-1}\}$, subject to the relations (with indices modulo $n$):
\[
\begin{aligned}
& e_i^2 = \delta e_i, \quad e_i e_{i \pm 1} e_i = e_i, \quad e_i e_j = e_j e_i \quad \text{if } |i - j| \geq 2, \\
& \Omega_n e_i = e_{i-1} \Omega_n, \quad (\Omega_n^{\pm 1} e_0)^{n-1} = \Omega_n^{\pm n} (\Omega_n^{\pm 1} e_0), \quad \Omega_n \Omega_n^{-1} = \Omega_n^{-1} \Omega_n = \mathrm{id}.
\end{aligned}
\]
These relations imply that $e_i = \Omega_n^{1 - i} e_1 \Omega_n^{i - 1}$ for all $i$, so the algebra is generated by $\{e_1, \Omega_n, \Omega_n^{-1}\}$.
\end{defn}

\begin{defn}[Affine link diagram]
An $(m, n)$-diagram with $2m \leq n$ is a rectangle with $n-2m$ marked points on the left and $n$ on the right, with the top and bottom edges identified to form a cylinder. Each point is connected to another by a non-crossing arc, possibly wrapping around the cylinder. Diagrams are considered equivalent under isotopy.
\end{defn}

\begin{defn}[Affine Standard Module]
Fix an even integer $n \in \mathbb{Z}_{\geq 0}$. The \emph{affine standard module} $\widetilde{V}_{n,m}$ is the complex vector space spanned by affine link diagrams with $n$ marked boundary points and $m = \frac{n - d}{2}$ non-crossing arcs (named links), where $d$ is the number of through-lines (vertical strands connecting left to right, named defects). The basis elements are equivalence classes of affine link diagrams up to isotopy on the cylinder, and are denoted by $\widetilde{B}_{n}^{d}$.
\end{defn}

\begin{remark}
The module $\widetilde{V}_{n,m}$ carries a natural action of the affine Temperley--Lieb algebra $\mathrm{aTL}_n(\beta)$, defined diagrammatically by concatenation followed by loop removal and normalization. This module is indecomposable but typically not simple, and its structure encodes important representation-theoretic and combinatorial information.
\end{remark}

\subsection{Inner Product and the Affine Meander Matrix}

\begin{defn}[Affine standard module and Gram product]
Let $\widetilde{V}_{n,m}$ be the standard module, define the Gram inner product $\langle \cdot \mid \cdot \rangle_G$ on $\widetilde{V}_{n,m}$ by:
\[
\left\langle \alpha \mid \beta \right\rangle_G := 
\begin{cases}
0 & \text{if any two defects from } \alpha \text{ or } \beta \text{ are connected}, \\
a^{n_a} b^{n_b} & \text{otherwise},
\end{cases}
\]
where $n_a$ and $n_b$ denote the numbers of contractible and non-contractible loops formed in the concatenated diagram $\mathcal{G}(\alpha, \beta)$, and $a, b$ are loop weights.
\end{defn}

\begin{thm}[Determinant of affine meander matrix]
Let $\widetilde{\mathcal{G}}_n^d$ denote the Gram matrix for the standard module with $n$ nodes and $d = n - 2m$ through-lines. Then:
\[
\begin{aligned}
\det \widetilde{\mathcal{G}}_n^0 &= \prod_{k=1}^{n/2} \left(a^2 - 4 \cos^2\left( \frac{4k\pi}{\kappa} \right) \right) \binom{n}{n/2 - k}, \\
\det \widetilde{\mathcal{G}}_n^d &= \prod_{k=1}^{m} \left(4 - 4 \cos^2\left( \frac{4(k + d/2)\pi}{\kappa} \right) \right) \binom{n}{(n-d)/2 - k}, \quad \text{for } d > 0, \\
\det \widetilde{\mathcal{G}}_n^n &= 1.
\end{aligned}
\]
\end{thm}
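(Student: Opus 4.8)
The plan is to compute $\det\widetilde{\mathcal{G}}_n^d$ by separating the three regimes $d=n$, $d>0$ and $d=0$, since the topology of the glued diagram $\mathcal{G}(\alpha,\beta)$, and hence which of the two loop weights $a,b$ can occur, changes qualitatively with the number $d$ of through-lines. In every case the engine will be an induction on $n$ obtained by deleting the last marked boundary point and splitting link states according to whether that point is a defect or an arc endpoint — the exact affine analogue of the linear Temperley--Lieb recursion $\det G_{n,p}=[\,n-2p+1\,]_q^{\,d_{n-1,p-1}}\det G_{n-1,p}\,\det G_{n-1,p-1}$ recalled above. The product over $k$ and the binomial exponents will then fall out by unrolling this recursion down to a trivial base case and telescoping the Catalan-triangle dimensions $d_{N,P}=\binom{N}{P}-\binom{N}{P-1}$.

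The case $d=n$ is immediate: here $m=0$, $\widetilde V_{n,n}$ is one-dimensional spanned by the all-defect diagram, gluing two copies produces $n$ through-strands and no closed loops, so $\langle\cdot\mid\cdot\rangle=a^0b^0=1$ and $\det\widetilde{\mathcal{G}}_n^n=1$. For $d>0$ the first point to record is that $b$ never enters: in a nonzero pairing the $d\geq 1$ matched through-strands cut the cylinder into $d$ simply connected regions, so every closed loop formed is contractible and $\langle\alpha\mid\beta\rangle$ is either $0$ or a power of $a$. The recursion above then presents $\widetilde{\mathcal{G}}_n^d$ in block-triangular form whose Schur complement couples $\widetilde{\mathcal{G}}_{n-1}^{\,d-1}$ and $\widetilde{\mathcal{G}}_{n-1}^{\,d+1}$ with a scalar factor; because of the winding this scalar is not the naive $q$-integer but the shifted quantity whose trigonometric form is $4-4\cos^2\!\bigl(\tfrac{4(k+d/2)\pi}{\kappa}\bigr)$ (a squared affine $q$-number, the square reflecting the $\pm$-pairing of loop eigenvalues on the cylinder). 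Iterating down to $\det\widetilde{\mathcal{G}}_d^d=1$ and collecting multiplicities, which telescope to the single binomial $\binom{n}{(n-d)/2-k}$, gives the stated formula for $d>0$.

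For $d=0$ the same inductive skeleton applies, but deleting a boundary point can now convert a contractible loop into a non-contractible one, so the recursion no longer closes within the family $\{\widetilde{\mathcal{G}}_N^0\}$. I would resolve this as in \cite{RSA14,MDSA13}: realize $\widetilde V_{n,0}$ via induction from the linear Temperley--Lieb standard modules — equivalently, diagonalize the translation generator $\Omega_n$, which commutes with the form up to a controlled twist, and work in each eigensector — and transport the linear Gram recursion through this functor. This expresses $\det\widetilde{\mathcal{G}}_n^0$ as a product of linear Gram determinants evaluated at spectral parameters shifted by the winding number, the once-winding sector supplying the $k=1$ factor that carries the non-contractible weight. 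The value $b=2$ is exactly the specialization relevant for radial $\mathrm{SLE}(\kappa)$ — a loop encircling the marked pair $z,z^*$ has fugacity $2$ — and substituting it, together with the Catalan-triangle identity matching the exponents, produces the displayed product.

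The main obstacle is this $d=0$ step: controlling how the bilinear form behaves when a non-contractible loop is created or destroyed under the boundary-point recursion, so that the determinant still factorizes. This is precisely where the genuinely affine input is needed — the induced-module description of $\widetilde V_{n,0}$, or its decomposition under $\Omega_n$ — and it has no counterpart in the linear case treated earlier in the paper. Once that structural input is in place, the rest is routine: verifying the trigonometric identities that rewrite the relevant $q$-numbers as the cosines displayed, checking that the accumulated dimensions telescope to $\binom{n}{(n-d)/2-k}$ (respectively $\binom{n}{n/2-k}$ when $d=0$), and confirming the base cases $\det\widetilde{\mathcal{G}}_d^d=1$ and $\det\widetilde{\mathcal{G}}_n^n=1$.
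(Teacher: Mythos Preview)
Your proposal diverges from the paper's sketch in a structural way. The paper (following \cite{RSA14,MDSA13}) treats all sectors $d$ uniformly: one decomposes $\widetilde V_{n,m}$ into eigenspaces of the rotation $\Omega_n$ (the ``angular momentum sectors''), the Gram form is block-diagonal in this basis, and each Fourier block contributes one factor of the form $a^2-4\cos^2(\cdots)$ or $4-4\cos^2(\cdots)$ with multiplicity given by the size of that block. There is no separate induction on $n$; the product over $k$ and the binomial exponents both come directly from counting basis vectors per sector.

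Your plan instead handles $d>0$ by a delete-a-point recursion modelled on the linear identity $\det G_{n,p}=[n-2p+1]_q^{\,d_{n-1,p-1}}\det G_{n-1,p}\det G_{n-1,p-1}$, reserving the $\Omega_n$-decomposition for $d=0$. The preliminary observation that for $d\geq 1$ the matched through-strands disconnect the cylinder, so only the contractible weight $a$ occurs in any nonzero pairing, is correct and genuinely useful. But the recursion step itself has a gap: in the affine module an arc incident to the ``last'' point can reach its other endpoint around either side of the cylinder, so deleting that point does not land in a single smaller affine module, and you do not get the clean block-triangular structure with a \emph{scalar} Schur complement that you assert. The shift you invoke---replacing $[n-2p+1]_q$ by the affine factor $4-4\cos^2\bigl(4(k+d/2)\pi/\kappa\bigr)$---is precisely what the $\Omega_n$-eigendecomposition explains (each winding sector carries its own twist), and it is not clear how to extract it from a bare boundary-point recursion. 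Likewise, the claim that the accumulated multiplicities telescope to the plain binomials $\binom{n}{(n-d)/2-k}$ (rather than Catalan-triangle numbers as in the linear case) is stated without support; in the paper's approach these binomials are simply the dimensions of the Fourier blocks.

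Since you already acknowledge that the $\Omega_n$-decomposition is unavoidable for $d=0$, and since that same decomposition disposes of $d>0$ with no extra effort, the hybrid strategy gains nothing and leaves the $d>0$ step unjustified. The economical route is the paper's: diagonalize the rotation once and read off the determinant sector by sector.
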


\begin{proof}[Sketch of proof]
The result follows from decomposing the standard module into angular momentum sectors, each corresponding to a Fourier mode of the rotation operator on the cylinder. The Gram matrix becomes block-diagonal, and each block contributes an eigenvalue involving $C_k = \cos\left( \frac{4k\pi}{\kappa} \right)$, leading to the product formula.
\end{proof}

\begin{remark}
    The determinant of the affine meander matrix yields non-zero for $\kappa$ irrational.
\end{remark}

\section{Asymptotics of partition functions and connectivity}
\label{asymptotics and connectivity}

\subsection{Multiple chordal SLE($\kappa$) connectivity}
The following two sections are summarized from \cite{KP16}.
We construct the partition functions \( \mathcal{Z}_\alpha \) corresponding to extremal multiple SLEs with deterministic connectivities encoded by link patterns \( \alpha \in \mathrm{LP}(n,m) \). Each \( \mathcal{Z}_\alpha \) satisfies the same system of partial differential equations and conformal covariance, but their boundary conditions---formulated as asymptotic constraints---depend explicitly on \( \alpha \).

In the chordal setup, the geometry dictates whether curves meet pairwise as prescribed by \( \alpha \). In local multiple SLEs, curve intersections are not directly meaningful; however, the possibility that \( |X_t - X_t^{(i)}| \to 0 \) is encoded in the behavior of the drift term \( b^{(j)} = \kappa \frac{\partial \mathcal{Z}}{\partial x_j} \), which depends on \( \mathcal{Z} \).

The asymptotic behavior near collisions \( x_j \to x_{j+1} \) is governed by the following limits:

\begin{equation} \label{fusion1}
\lim_{x_j, x_{j+1} \to \xi} \frac{\mathcal{Z}(x_1, \ldots, x_n)}{(x_{j+1} - x_j)^{(\kappa - 6)/\kappa}} \end{equation}

If this limit vanishes, the refined limit

\begin{equation} \label{fusion2}
\lim_{x_j, x_{j+1} \to \xi} \frac{\mathcal{Z}(x_1, \ldots, x_n)}{(x_{j+1} - x_j)^{2/\kappa}} \end{equation}

exists and is non-zero.

\begin{thm}
Fix \( j \in \{1, \ldots, n\} \), and suppose \( \mathcal{Z} \) is a positive solution to the system (1.2) such that
\[ \left|x_j - x_{j\pm 1}\right|^{-\Delta} \mathcal{Z}(x_1, \ldots, x_n) \to C \neq 0 \]
as \( x_j, x_{j\pm 1} \to \xi \), uniformly for fixed \( x_k \) with \( k \neq j, j\pm 1 \). Then the process \( Y_t = |X_t - X_t^{(j\pm 1)}| \) is absolutely continuous with respect to a time-changed Bessel process of dimension
\[ \delta = 1 + 2\Delta + \frac{4}{\kappa}. \]
Depending on the asymptotics, two cases arise:
\begin{itemize}
  \item If the limit \eqref{fusion1} is non-zero, then \( \Delta = \frac{\kappa - 6}{\kappa} \) and \( \delta = 3 - \frac{8}{\kappa} < 2 \), so the curves can collide (\( Y_\tau = 0 \)) with positive probability.
  \item If the limit \eqref{fusion1} vanishes but \eqref{fusion2} is non-zero, then \( \Delta = \frac{2}{\kappa} \), so \( \delta = 1 + \frac{8}{\kappa} > 2 \), and \( Y_t > 0 \) almost surely for \( t \leq \tau \).
\end{itemize}
\end{thm}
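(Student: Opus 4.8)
The plan is to realize $Y_t = |X_t - X_t^{(j\pm 1)}|$ as a one-dimensional diffusion governed by the Loewner equation of the multiple $\mathrm{SLE}(\kappa)$ with partition function $\mathcal{Z}$, to extract the singular part of its drift from the prescribed fusion asymptotics of $\mathcal{Z}$, and then to identify the resulting process with a Bessel process up to a Girsanov change of measure and a time change; the classical dichotomy for Bessel processes across dimension $2$ then yields the two stated conclusions.

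Concretely, grow the $j$-th curve in the capacity parametrization. Writing $X_t$ for its driving function and $X_t^{(k)}$ for the images of the remaining marked points, one has
\[ dX_t = \sqrt{\kappa}\, dB_t + \kappa\, \partial_j \log \mathcal{Z}\, dt, \qquad dX_t^{(k)} = \frac{2\, dt}{X_t^{(k)} - X_t} \quad (k \neq j), \]
where $\partial_j \log \mathcal{Z}$ is evaluated at the running configuration. Taking without loss of generality the right neighbour and setting $Y_t := X_t^{(j+1)} - X_t > 0$, Itô's formula gives
\[ dY_t = -\sqrt{\kappa}\, dB_t + \left( \frac{2}{Y_t} - \kappa\, \partial_j \log \mathcal{Z} \right) dt , \]
the case of the left neighbour being identical. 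Now insert the hypothesis: as the collision is approached, $\mathcal{Z} = C\,(x_{j+1}-x_j)^{\Delta}\,\widehat{\mathcal{Z}}\,(1+o(1))$ uniformly for the spectator coordinates on a compact set, with $\widehat{\mathcal{Z}} > 0$ the reduced partition function obtained by the collapse; differentiating the logarithm, $\kappa\, \partial_j \log \mathcal{Z} = -\kappa\Delta / Y_t + r_t$ with $r_t$ bounded as long as the spectator points stay away from $\xi$. Hence, up to the stopping time $\tau$ at which $Y$ exits a fixed interval $(0,\varepsilon)$ or some $X^{(k)}$ with $k \neq j, j+1$ approaches $\xi$, the process obeys
\[ dY_t = -\sqrt{\kappa}\, dB_t + \left( \frac{2 + \kappa\Delta}{Y_t} + \widetilde{r}_t \right) dt, \qquad \widetilde{r}_t \text{ bounded}. \]

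A Girsanov transform on $[0,\tau]$ absorbs the bounded drift $\widetilde{r}_t$ into the martingale part, and the deterministic reparametrization $s = \kappa t$ turns $Y$ into a solution of $d\widehat{Y}_s = dW_s + \tfrac{\delta - 1}{2\widehat{Y}_s}\, ds$ with $\tfrac{\delta-1}{2} = \tfrac{2}{\kappa} + \Delta$, that is, a Bessel process of dimension $\delta = 1 + 2\Delta + \tfrac{4}{\kappa}$; composing the two time changes gives the asserted absolute continuity. In the leading fusion channel $\mathcal{Z}$ is normalized so that \eqref{fusion1} is finite and nonzero; matching the exponent with the $(x_{i+1}-x_i)^{6/\kappa - 1}$ renormalization of the Coulomb-gas integrals of the previous sections fixes $\Delta = (\kappa-6)/\kappa$, so $\delta = 3 - 8/\kappa < 2$ for every $\kappa \in (0,8)$; since a Bessel process of dimension strictly below $2$ reaches $0$ in finite time almost surely, absolute continuity on $[0,\tau]$ forces the collision event $\{Y_\tau = 0\}$ to have positive probability. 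If instead \eqref{fusion1} vanishes, the indicial analysis of the PDE system in the collapsing variable leaves the second exponent $\Delta = 2/\kappa$, for which \eqref{fusion2} is the first nontrivial limit; then $\delta = 1 + 8/\kappa > 2$, a Bessel process of dimension above $2$ never hits $0$, and $Y_t > 0$ for all $t \le \tau$ almost surely.

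The delicate point is the Girsanov step: one must verify that the regular remainder $\widetilde{r}_t$ of $\kappa\, \partial_j \log \mathcal{Z}$ is genuinely bounded on $[0,\tau]$. This requires upgrading the stated uniform convergence of $(x_{j+1}-x_j)^{-\Delta}\mathcal{Z}$ to uniform control of its logarithmic $x_j$-derivative — equivalently, local boundedness of $\partial_j \log \widehat{\mathcal{Z}}$, where the positivity of $\widehat{\mathcal{Z}}$ is used — together with a localization argument that allows Girsanov's theorem to be applied despite the singular $1/Y_t$ drift and the exit time $\tau$. The remaining ingredients — the stochastic-calculus bookkeeping, the deterministic time change, and the standard hitting dichotomy for Bessel processes at dimension $2$ — are routine.
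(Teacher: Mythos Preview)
Your approach differs from the paper's in a way that creates a genuine gap. You work directly with the SDE for $Y_t$, extract the singular part $-\kappa\Delta/Y_t$ of the drift from the asymptotic $\mathcal{Z}\sim C(x_{j+1}-x_j)^{\Delta}\widehat{\mathcal{Z}}$, and then try to Girsanov away the bounded remainder $\widetilde r_t$. As you yourself flag, this step requires uniform control of $\partial_j\log\mathcal{Z}$ near the collision, i.e.\ $C^1$ information, whereas the hypothesis only gives uniform convergence of $(x_{j+1}-x_j)^{-\Delta}\mathcal{Z}$ to a nonzero constant, which is $C^0$ information. Uniform convergence of a function does not in general control its derivative, so ``differentiating the logarithm'' to conclude $r_t$ is bounded is not justified by the stated assumption. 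Without this, the Girsanov density you need may fail to be a martingale (or even finite), and the argument stalls.

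The paper avoids this problem by never differentiating the asymptotic. It introduces an auxiliary reference measure $\mathbb{P}_\varnothing$ (ordinary $\mathrm{SLE}_\kappa$) and two explicit local martingales under it,
\[
M_t=\prod_{i\neq j}\bigl(g_t'(X_0^{(i)})\bigr)^{(6-\kappa)/2\kappa}\,\mathcal{Z}(X_t^{(1)},\ldots,X_t^{(n)}),\qquad
\bar M_t=\bigl(g_t'(X_0^{(j\pm1)})\bigr)^{(6-\kappa)/2\kappa}\,|X_t-X_t^{(j\pm1)}|^{\Delta},
\]
which serve as Radon--Nikodym derivatives of the multiple-SLE measure $\mathbb{P}$ and of a comparison measure $\overline{\mathbb{P}}$, respectively. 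The point is that the ratio $M_t/\bar M_t$ is, up to harmless conformal-derivative factors, exactly $(X_t^{(j\pm1)}-X_t)^{-\Delta}\mathcal{Z}$, so the \emph{stated} uniform convergence immediately bounds this ratio away from $0$ and $\infty$ on $[0,\tau]$. That alone gives mutual absolute continuity of $\mathbb{P}$ and $\overline{\mathbb{P}}$; under $\overline{\mathbb{P}}$ the SDE for $Y_t$ is computed directly from the explicit $\bar M_t$ (no asymptotic expansion needed) and is the Bessel-type equation of dimension $\delta=1+2\Delta+4/\kappa$. In short, the paper compares measures at the level of densities rather than drifts, which is exactly what allows the $C^0$ hypothesis to suffice. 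Your identification of $\delta$ and the Bessel dichotomy are fine; the missing idea is this three-measure comparison.
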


\begin{proof}[Sketch of proof]
We construct three measures \( \mathbb{P}, \mathbb{P}_\varnothing, \overline{\mathbb{P}} \) on Loewner evolutions where all dynamics reduce to the driving function \( X_t \), with other points transported via the Loewner flow. Girsanov's theorem is used to compare these measures via Radon--Nikodym derivatives:
\[
M_t = \prod_{i \neq j} (g_t'(X_0^{(i)}))^{(6-\kappa)/2\kappa} \cdot \mathcal{Z}(X_t^{(1)}, \ldots, X_t^{(n)}),
\]
\[
\bar{M}_t = (g_t'(X_0^{(j\pm1)}))^{(6-\kappa)/2\kappa} \cdot |X_t - X_t^{(j\pm1)}|^\Delta.
\]

Their ratio is bounded under the asymptotic assumptions, ensuring mutual absolute continuity. The resulting SDE for \( Y_t = |X_t - X_t^{(j\pm1)}| \) under \( \overline{\mathbb{P}} \) takes the form:
\[
\mathrm{d} Y_t = \sqrt{\kappa} \, \mathrm{d}\widetilde{B}_t + \left(1 + \frac{\kappa \Delta}{2}\right) \cot\left(\frac{Y_t}{2}\right) \mathrm{d}t,
\]
which matches the form of a Bessel-type process with effective dimension \( \delta \).
\end{proof}

\begin{defn}[Pure Partition Functions] \label{pure partition function}
The functions $\mathcal{Z}_\alpha: \mathfrak{X}_n \rightarrow \mathbb{R}^+$, indexed by link patterns $\alpha \in \mathrm{LP}(n,m)$, are called \emph{pure partition functions}. They are a collection of positive solutions to the null vector equations, subject to boundary conditions specified by their asymptotic behavior, which is determined by the link pattern $\alpha$
\begin{itemize}
    \item[\textnormal{(ASY)}] \textbf{Asymptotics:} For all $\alpha \in \mathrm{LP}_(n,m)$, $j \in \{1, \ldots, n\}$, and $\xi \in (x_{j-1}, x_{j+2})$, the following limit exists:
    \[
    \lim_{x_j, x_{j+1} \rightarrow \xi} \frac{\mathcal{Z}_\alpha\left(x_1, \ldots, x_n\right)}{(x_{j+1} - x_j)^{\frac{6 - \kappa}{\kappa}}}
    =
    \begin{cases}
        0, & \text{if } \{j, j+1\} \notin \alpha, \\
        \mathcal{Z}_{\hat{\alpha}}\left(x_1, \ldots, x_{j-1}, x_{j+2}, \ldots, x_n\right), & \text{if } \{j, j+1\} \in \alpha,
    \end{cases}
    \]
\end{itemize}
where $\hat{\alpha} = \alpha / \{j, j+1\} \in \mathrm{LP}(n-2,m-1)$ denotes the link pattern obtained from $\alpha$ by removing the link $\{j, j+1\}$ and relabeling the remaining indices as $1, 2, \ldots, n-2$.
\end{defn}

\begin{conjecture}[Pure partition function- Coulomb gas integral]
For irrational $\kappa \in(0,8)$, the pure partition functions are related to Coulomb gas integrals by affine meander matrix:
\begin{equation}
\mathcal{J}_\beta^{(m,n)}(\boldsymbol{x})=\sum_{\alpha \in \operatorname{LP}(n,m)} \mathcal{M}_{\kappa}(\alpha, \beta) \mathcal{Z}_\alpha(\boldsymbol{x}), \quad  \beta \in \mathrm{LP}(n,m).
\end{equation}

Conversely, we have 

\begin{equation}
\mathcal{Z}_\beta(\boldsymbol{x})=\sum_{\alpha \in \operatorname{LP}(n,m)} \mathcal{M}_\kappa(\alpha, \beta)^{-1} \mathcal{J}_\alpha^{(m,n)}(\boldsymbol{x}), \quad  \beta \in \mathrm{LP}(n,m).
\end{equation}
\end{conjecture}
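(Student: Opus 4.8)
The plan is to take the second identity as a \emph{definition},
\[
\mathcal{Z}_\beta(\boldsymbol{x}):=\sum_{\alpha\in\operatorname{LP}(n,m)}\mathcal{M}_\kappa(\alpha,\beta)^{-1}\,\mathcal{J}_\alpha^{(m,n)}(\boldsymbol{x}),
\]
and to prove that the $\mathcal{Z}_\beta$ so defined are exactly the pure partition functions of Definition~\ref{pure partition function}; the first identity then follows at once, since $\mathcal{M}_\kappa$ is symmetric (it is a Gram matrix) and, for $\kappa\notin\mathbb{Q}$, invertible by the meander determinant theorem, so inverting the defining relation gives $\mathcal{J}_\beta^{(m,n)}=\sum_\alpha\mathcal{M}_\kappa(\alpha,\beta)\,\mathcal{Z}_\alpha$. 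The easy part is the system of equations: each $\mathcal{J}_\alpha^{(m,n)}$ solves the null vector equations \eqref{null vector equation for Screening solutions} and the Ward identities \eqref{Ward identities for screening solutions} with boundary weight $(6-\kappa)/(2\kappa)$, and since the coefficients $\mathcal{M}_\kappa(\alpha,\beta)^{-1}$ do not depend on $\boldsymbol{x}$, the combination $\mathcal{Z}_\beta$ solves the same system, so (PDE) and (COV) hold automatically.

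The heart of the argument is the recursive asymptotics (ASY). I would verify it by applying Theorem~\ref{Asymptotic analysis of Coulomb gas correlation} term by term: as $x_j,x_{j+1}\to\xi$, the renormalised integral $(x_{j+1}-x_j)^{6/\kappa-1}\mathcal{J}_\alpha^{(m,n)}$ tends to $0$ when neither point is a contour endpoint of $\alpha$ (Configuration~1), to $n(\kappa)\,\mathcal{J}_{\hat\alpha}^{(m-1,n-2)}$ when $\{j,j+1\}$ is an arc of $\alpha$ (Configuration~2), and to the reductions of Configurations~3--5 in the mixed cases. Substituting these into $\sum_\alpha\mathcal{M}_\kappa(\alpha,\beta)^{-1}(\cdot)$ and regrouping the sum by the value of the reduced pattern $\hat\alpha$, one must show the contributions collapse to $\mathcal{Z}_{\hat\beta}$ when $\{j,j+1\}\in\beta$ and to $0$ otherwise; the algebraic ingredient is a compatibility identity relating $\mathcal{M}_\kappa^{-1}$ on $\operatorname{LP}(n,m)$ to $\mathcal{M}_\kappa^{-1}$ on $\operatorname{LP}(n-2,m-1)$ under the arc-fusion $\beta\mapsto\hat\beta$, which I would prove by induction on $n$ using the recursive block structure of the Gram matrix. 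As a cross-check, Theorems~\ref{chordal meander} and~\ref{radial meander} give $l_\gamma(\mathcal{J}_\alpha^{(m,n)})=\mathcal{M}_\kappa(\alpha,\gamma)$ for the total-collapse functional $l_\gamma$, hence $l_\gamma(\mathcal{Z}_\beta)=\sum_\alpha\mathcal{M}_\kappa(\beta,\alpha)^{-1}\mathcal{M}_\kappa(\alpha,\gamma)=\delta_{\beta\gamma}$, which is exactly the number that iterating (ASY) must produce.

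The last ingredient is positivity, $\mathcal{Z}_\beta>0$ on $\{x_1<\dots<x_n\}$, which I would obtain through the probabilistic side recalled just before Definition~\ref{pure partition function}: a positive solution of (PDE)+(COV) with the prescribed collision asymptotics is, by the Girsanov/Bessel comparison, the normalising function of a multiple $\mathrm{SLE}(\kappa)$ with connectivity $\beta$, and conversely such a multiple $\mathrm{SLE}$ produces a positive solution with those asymptotics; by what has been established (PDE, COV and ASY) together with the Kytölä--Peltola uniqueness theorem of \cite{KP16}, it must coincide with $\mathcal{Z}_\beta$. Granting this, $\mathcal{Z}_\beta$ is the pure partition function of connectivity $\beta$ and the two identities are established.

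The main obstacle is precisely positivity: it is far from clear that the complex contour integrals $\mathcal{J}_\alpha^{(m,n)}$, recombined with the (real but sign-indefinite) matrix $\mathcal{M}_\kappa^{-1}$, yield a positive real function, and a conceptual input — either a direct probabilistic construction of $\mathcal{Z}_\beta$ or a manifestly positive integral representation — seems unavoidable. A second gap is the still-open spanning statement that $\{\mathcal{J}_\alpha^{(m,n)}\}_{\alpha\in\operatorname{LP}(n,m)}$ is a basis of the full admissible solution space, which is what guarantees that the pure partition functions actually lie in the span of the $\mathcal{J}_\alpha^{(m,n)}$. These two points are why the statement is at present a conjecture; by contrast, the asymptotic bookkeeping of the middle step (keeping track of the $n(\kappa)$ and Euler-beta prefactors so the reduction lands on $\mathcal{Z}_{\hat\beta}$ exactly) is technical but of the same nature as the computations already carried out in Configurations~1--5.
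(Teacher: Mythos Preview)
Your proposal is correct and follows essentially the same route as the paper: define $\mathcal{Z}_\beta$ by the inverse meander matrix, inherit the null-vector equations and Ward identities from the $\mathcal{J}_\alpha^{(m,n)}$, verify the (ASY) boundary behaviour from the Configuration~1--5 asymptotics (equivalently, from $l_\gamma(\mathcal{J}_\alpha^{(m,n)})=\mathcal{M}_\kappa(\alpha,\gamma)$ so that $l_\gamma(\mathcal{Z}_\beta)=\delta_{\beta\gamma}$), and leave positivity as the open obstruction. The paper itself is explicit that positivity ``is not rigorously established'' and that this is precisely why the statement is recorded as a conjecture; your identification of positivity and of the spanning hypothesis as the two genuine gaps matches the paper's own assessment.
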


Although the positivity of the pure partition functions $\mathcal{Z}_\beta$ is not rigorously established, asymptotic analysis of the Coulomb gas ground state solutions confirms that $\mathcal{Z}_\beta$ satisfies the required boundary asymptotics.

\subsection{Multiple radial SLE($\kappa$) connectivity}
In this section, we aim to construct the partition functions $\mathcal{Z}_\alpha$ associated with multiple radial SLE($\kappa$) systems, where the deterministic connectivity is encoded by a link pattern $\alpha \in \mathrm{LP}_N$. Each $\mathcal{Z}_\alpha$ must satisfy a shared system of partial differential equations and conformal covariance conditions. However, the asymptotic behavior of $\mathcal{Z}_\alpha$ near colliding marked points reflects the connectivity pattern $\alpha$, and thus provides the necessary boundary data for selecting the desired solution.

In the case of multiple SLEs with pure geometry, we wish the random curves to connect marked points in pairs according to $\alpha$. While local multiple SLEs do not produce literal intersections, the connectivity type determines whether or not certain pairs of driving processes can approach each other. Specifically, for the system of processes $(X_t^{(1)}, \ldots, X_t^{(n)})$, the connectivity pattern dictates whether the difference $|X_t - X_t^{(j\pm 1)}|$ can hit zero in finite time. Since the drift term $b^{(j)}$ in the SDE for $X_t^{(j)}$ depends on $\mathcal{Z}$, the behavior of this difference process is encoded in the short-distance asymptotics of the partition function.

We examine the fusion behavior of $\mathcal{Z}$ as two marked points coalesce. The following limit always exists:
\begin{equation} 
\lim_{x_j, x_{j+1} \to \xi} \frac{\mathcal{Z}(x_1,\ldots,x_n)}{(x_{j+1}-x_j)^{\frac{\kappa-6}{\kappa}}}.
\end{equation}
If the limit vanishes, then a refined fusion limit exists:
\begin{equation} 
\lim_{x_j, x_{j+1} \to \xi} \frac{\mathcal{Z}(x_1,\ldots,x_n)}{(x_{j+1}-x_j)^{\frac{2}{\kappa}}}.
\end{equation}

These asymptotics determine whether collisions of driving processes occur, and we formalize this through the following theorem.

\begin{thm}
Let $j \in \{1,\ldots,n\}$ and suppose $\mathcal{Z}$ is a positive solution to the system of PDEs, such that the limit
\[
\lim_{x_j,x_{j\pm1} \to \xi} \left|x_j - x_{j\pm1}\right|^{-\Delta} \cdot \mathcal{Z}(x_1,\ldots,x_n)
\]
exists and is nonzero for all fixed $\{x_k\}_{k \neq j, j\pm1}$ and $\xi$ in a suitable interval. Let $(X_t^{(1)},\ldots,X_t^{(n)})$ solve the SDE with drift $b^{(j)} = \kappa \, \partial_j \log \mathcal{Z}$. Let $\tau$ be any stopping time such that $|X_t^{(k)} - X_t^{(l)}| \geq \varepsilon$ and $|X_t^{(k)} - X_t^{(l)}| \leq 2\pi - \varepsilon$ for all $k \neq l$ and $t \leq \tau$. Then the process $Y_t = |X_t - X_t^{(j\pm1)}|$ is absolutely continuous with respect to a linear time change of a Bessel process of dimension
\[
\delta = 1 + 2\Delta + \frac{4}{\kappa}.
\]

Depending on the value of the fusion limit:
\begin{itemize}
    \item If the limit \eqref{fusion1} is nonzero, then $\Delta = \frac{\kappa - 6}{\kappa}$ and $\delta = 3 - \frac{8}{\kappa} < 2$, implying that $Y_t$ may hit zero with positive probability.
    \item If the limit \eqref{fusion1} vanishes but the limit \eqref{fusion2} is nonzero, then $\Delta = \frac{2}{\kappa}$ and $\delta = 1 + \frac{8}{\kappa} > 2$, so $Y_t$ remains strictly positive almost surely.
\end{itemize}
\end{thm}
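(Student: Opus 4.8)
The plan is to mirror, in the radial geometry, the Girsanov comparison sketched for the chordal theorem above. First I would realize the multiple radial $\SLE(\kappa)$ system as a single radial Loewner evolution $(g_t)$ driven by $X_t = X_t^{(j)}$, with the remaining marked points $X_t^{(k)}$ ($k \ne j$) transported by $g_t$, and introduce the two processes
\[
M_t \;=\; \Bigl(\prod_{k \ne j} g_t'(X_0^{(k)})^{(6-\kappa)/2\kappa}\Bigr)\,\mathcal{Z}\bigl(X_t^{(1)},\dots,X_t^{(n)}\bigr), \qquad \bar M_t \;=\; g_t'\bigl(X_0^{(j\pm1)}\bigr)^{(6-\kappa)/2\kappa}\,\bigl|X_t - X_t^{(j\pm1)}\bigr|^{\Delta}.
\]
Because $\mathcal{Z}$ solves the radial null-vector system and transforms with the prescribed conformal covariance, $M_t$ is a local martingale under the driftless Loewner measure $\mathbb{P}_\varnothing$ and serves as the Radon--Nikodym density converting $\mathbb{P}_\varnothing$ into the multiple radial $\SLE(\kappa)$ measure $\mathbb{P}$, under which $X_t^{(j)}$ acquires drift $b^{(j)} = \kappa\,\partial_j \log \mathcal{Z}$. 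Likewise $\bar M_t$ is a local martingale under $\mathbb{P}_\varnothing$ for the value of $\Delta$ for which $y \mapsto y^{\Delta}$ solves the second-order ODE governing the Loewner transport of the single point $X_t^{(j\pm1)}$; this condition on $\Delta$ is precisely the dimension relation $\delta = 1 + 2\Delta + 4/\kappa$, and $\bar M_t$ defines a measure $\overline{\mathbb{P}}$ under which only the pair $(X_t, X_t^{(j\pm1)})$ interacts.

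The core of the argument is the comparison of $\mathbb{P}$ and $\overline{\mathbb{P}}$ on the $\sigma$-algebra $\mathcal{F}_\tau$ through the Radon--Nikodym density
\[
\left.\frac{\mathrm{d}\overline{\mathbb{P}}}{\mathrm{d}\mathbb{P}}\right|_{\mathcal{F}_t} \;=\; \frac{\bar M_t / \bar M_0}{M_t / M_0}.
\]
On $\{t \le \tau\}$ all pairwise distances $|X_t^{(k)} - X_t^{(l)}|$ are confined to $[\varepsilon, 2\pi - \varepsilon]$, so away from the colliding pair $\mathcal{Z}$ is continuous and bounded away from $0$ and $\infty$, and the excess conformal factors $g_t'(X_0^{(k)})^{(6-\kappa)/2\kappa}$ with $k \ne j, j\pm1$ are likewise controlled on this event. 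The hypothesis furnishes the only remaining ingredient: since $|X_t - X_t^{(j\pm1)}|^{-\Delta} \mathcal{Z}(X_t^{(1)},\dots,X_t^{(n)}) \to C \ne 0$ uniformly as the pair coalesces, the displayed ratio stays between two deterministic positive constants for $t \le \tau$. Hence $\mathbb{P}$ and $\overline{\mathbb{P}}$ are mutually absolutely continuous on $\mathcal{F}_\tau$, and it suffices to analyze $Y_t = |X_t - X_t^{(j\pm1)}|$ under $\overline{\mathbb{P}}$.

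Under $\overline{\mathbb{P}}$, Loewner's equation together with the tilt by $\bar M_t$ gives
\[
\mathrm{d}Y_t \;=\; \sqrt{\kappa}\,\mathrm{d}\widetilde{B}_t \;+\; \Bigl(1 + \tfrac{\kappa \Delta}{2}\Bigr) \cot\!\Bigl(\tfrac{Y_t}{2}\Bigr)\,\mathrm{d}t,
\]
where $\widetilde{B}_t$ is a standard Brownian motion. Since $\cot(y/2) = 2/y + O(y)$ near $y = 0$, a linear time change reduces this diffusion to one comparable --- mutually absolutely continuous up to the exit from a neighborhood of $0$ --- with a Bessel process of dimension $\delta = 1 + \tfrac{2}{\kappa}(2 + \kappa\Delta) = 1 + 2\Delta + 4/\kappa$. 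Specializing then closes the theorem: if the coarse fusion limit, with exponent $(\kappa-6)/\kappa$, is nonzero then $\Delta = (\kappa-6)/\kappa$ and $\delta = 3 - 8/\kappa < 2$ for $\kappa \in (0,8)$, so $Y_t$ can reach $0$ with positive probability; if that limit vanishes but the refined one, with exponent $2/\kappa$, does not, then $\Delta = 2/\kappa$ and $\delta = 1 + 8/\kappa > 2$, so $Y_t$ stays strictly positive on $[0,\tau]$ almost surely. Transporting both conclusions back to $\mathbb{P}$ is legitimate by the mutual absolute continuity established above.

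The step I expect to be the main obstacle is the two-sided bound on the Radon--Nikodym density. It requires both that the convergence $|X_t - X_t^{(j\pm1)}|^{-\Delta}\mathcal{Z} \to C$ be genuinely uniform over the compact range of the non-colliding points carved out by $\tau$, and that the conformal-derivative factors $g_t'(X_0^{(k)})^{(6-\kappa)/2\kappa}$ neither blow up nor degenerate to $0$ before $\tau$ --- a point that is more delicate on the cylinder than in the half-plane, because there a marked-point derivative can degenerate both when two points collide and when a point drifts to the antipode of the seed. This is precisely why the two cutoffs $|X_t^{(k)} - X_t^{(l)}| \ge \varepsilon$ and $|X_t^{(k)} - X_t^{(l)}| \le 2\pi - \varepsilon$ are imposed, and showing that they suffice to keep all these quantities controlled is the technical heart of the proof.
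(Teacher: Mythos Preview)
Your proposal is correct and follows essentially the same route as the paper: introduce the three measures $\mathbb{P}$, $\mathbb{P}_\varnothing$, $\overline{\mathbb{P}}$ via the local martingales $M_t$ and $\bar M_t$ exactly as you wrote them, use the asymptotic hypothesis to bound the ratio $M_t/\bar M_t$ on $\{t\le\tau\}$ and obtain mutual absolute continuity, then read off the $\cot(Y_t/2)$ SDE under $\overline{\mathbb{P}}$ and compare with a Bessel process of dimension $\delta = 1 + 2\Delta + 4/\kappa$. Your discussion of the two-sided Radon--Nikodym bound and of why both the $\varepsilon$ and $2\pi-\varepsilon$ cutoffs are needed on the cylinder is in fact more explicit than what the paper records.
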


\begin{proof}
We focus on the case with sign $-$; the other case is analogous. Let $\mathrm{P}$ denote the law of the SDE, and let $\mathrm{P}_{\emptyset}$ be the law of a standard radial $\mathrm{SLE}_\kappa$, with driving process $X_t = \sqrt{\kappa} B_t + X_0$. Define the Radon--Nikodym derivative:
\[
M_t = \prod_{i \neq j} \left(g_t'(X_0^{(i)})\right)^{\frac{6-\kappa}{2\kappa}} \cdot \mathcal{Z}(X_t^{(1)}, \ldots, X_t^{(n)}).
\]
Then $\mathrm{P}$ is absolutely continuous with respect to $\mathrm{P}_{\emptyset}$ up to any localizing sequence $\tau_n \uparrow \tau$.

Next, define an auxiliary measure $\overline{\mathrm{P}}$ via
\[
\bar{M}_t = \left(g_t'(X_0^{(j-1)})\right)^{\frac{6-\kappa}{2\pi}} \cdot (X_t - X_t^{(j-1)})^\Delta.
\]
Under the assumption on $\mathcal{Z}$, the ratio $M_t / \bar{M}_t$ is bounded away from $0$ and $\infty$ for all $t \leq \tau$, so $\mathrm{P}$ and $\overline{\mathrm{P}}$ are mutually absolutely continuous. Under $\overline{\mathrm{P}}$, the process $X_t$ satisfies
\[
dX_t = \sqrt{\kappa} \, d\tilde{B}_t + \frac{\kappa\Delta}{2} \cot\left( \frac{2}{X_t - X_t^{(j-1)}} \right) dt.
\]
Therefore, $Y_t = X_t - X_t^{(j-1)}$ satisfies
\[
dY_t = \sqrt{\kappa} \, d\tilde{B}_t + \left(1 + \frac{\kappa\Delta}{2}\right) \cot\left( \frac{2}{Y_t} \right) dt.
\]
This corresponds to a linear time change of a Bessel process of dimension $\delta = 1 + 2\Delta + \frac{4}{\kappa}$. The final conclusions follow from known properties of Bessel processes.
\end{proof}

\begin{defn}[Pure Partition Functions] 
The functions $\mathcal{Z}_\alpha: \mathfrak{X}_n \rightarrow \mathbb{R}^+$, indexed by link patterns $\alpha \in \mathrm{LP}(n,m)$, are called \emph{pure partition functions}. They are a collection of positive solutions to the null vector equation, subject to boundary conditions specified by their asymptotic behavior, which is determined by the link pattern $\alpha$
\begin{itemize}
    \item[\textnormal{(ASY)}] \textbf{Asymptotics:} For all $\alpha \in \mathrm{LP}_n$, $j \in \{1, \ldots, n\}$, and $\xi \in (\theta_{j-1}, \theta_{j+2})$, the following limit exists:
    \[
    \lim_{\theta_j, \theta_{j+1} \rightarrow \xi} \frac{\mathcal{Z}_\alpha\left(\theta_1, \ldots, \theta_n\right)}{(\theta_{j+1} - \theta_j)^{\frac{6 - \kappa}{\kappa}}}
    =
    \begin{cases}
        0, & \text{if } \{j, j+1\} \notin \alpha, \\
        \mathcal{Z}_{\hat{\alpha}}\left(\theta_1, \ldots, \theta_{j-1}, \theta_{j+2}, \ldots, \theta_n\right), & \text{if } \{j, j+1\} \in \alpha,
    \end{cases}
    \]
\end{itemize}
where $\hat{\alpha} = \alpha / \{j, j+1\} \in \mathrm{LP}(n-2,m-1)$ denotes the link pattern obtained from $\alpha$ by removing the link $\{j, j+1\}$ and relabeling the remaining indices as $1, 2, \ldots, n-2$.
\end{defn}

\begin{conjecture}[Pure partition function- Coulomb gas integral]
For irrational $\kappa \in(0,8)$, the pure partition functions are related to Coulomb gas integrals by affine meander matrix:
\begin{equation}
\mathcal{J}_\beta^{(m,n)}(\boldsymbol{\theta})=\sum_{\alpha \in \operatorname{LP}(n,m)} \mathcal{M}_{\kappa}(\alpha, \beta) \mathcal{Z}_\alpha(\boldsymbol{\theta}), \quad  \beta \in \mathrm{LP}(n,m).
\end{equation}

Conversely, we have 

\begin{equation}
\mathcal{Z}_\beta(\boldsymbol{\theta})=\sum_{\alpha \in \operatorname{LP}(n,m)} \mathcal{M}_\kappa(\alpha, \beta)^{-1} \mathcal{J}_\alpha^{(m,n)}(\boldsymbol{\theta}), \quad  \beta \in \mathrm{LP}(n,m).
\end{equation}
\end{conjecture}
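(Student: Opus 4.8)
The plan is to take the second identity as a \emph{definition}. Since $\kappa$ is irrational, the affine meander matrix $\mathcal{M}_\kappa$ is invertible: its determinant is given by the explicit product formula established above, which does not vanish for irrational $\kappa$. We may therefore set $\mathcal{Z}_\beta:=\sum_{\alpha\in\mathrm{LP}(n,m)}\mathcal{M}_\kappa(\alpha,\beta)^{-1}\,\mathcal{J}_\alpha^{(m,n)}$, and the two displayed formulas then become equivalent by matrix inversion (using that $\mathcal{M}_\kappa$ is symmetric). It suffices to prove that each $\mathcal{Z}_\beta$ is a pure partition function: that it solves the null-vector equations, satisfies the three Ward (M\"obius-covariance) identities, and obeys the asymptotics property (ASY). \emph{Positivity} of $\mathcal{Z}_\beta$ is the one defining property that the asymptotic method cannot supply, which is why the result is recorded as a conjecture, so the honest target is ``(PDE) $+$ (covariance) $+$ (ASY)'', with positivity flagged as open.

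The PDE and covariance parts are immediate: each radial ground solution $\mathcal{J}_\alpha^{(m,n)}$ satisfies the null-vector equations and the Ward identities by its construction above, and since these are linear and homogeneous, so does every $\mathbb{C}$-linear combination, in particular $\mathcal{Z}_\beta$. The content is (ASY). Fix $j$, let $\theta_j,\theta_{j+1}\to\xi$, and write $\mathrm{asympt}_j$ for the elementary operation $F\mapsto\lim(\theta_{j+1}-\theta_j)^{(6-\kappa)/\kappa}F$. The radial interval-collapse theorem (Cases 1--5 above) does two things for us: it shows this limit exists for every $\mathcal{J}_\alpha^{(m,n)}$, and it identifies $\mathrm{asympt}_j\mathcal{J}_\alpha^{(m,n)}$ as an explicit $\mathbb{C}$-linear combination of reduced Coulomb gas integrals $\mathcal{J}_{\alpha'}^{(m-1,n-2)}(\hat{\boldsymbol\theta})$ on the collapsed configuration --- so $\mathrm{asympt}_j$ maps the span of the degree-$n$ integrals into the span of the degree-$(n-2)$ integrals, acting there as the affine Temperley--Lieb cap generator $e_j$ (contractible loops weighted by $n(\kappa)$, the non-contractible one by $2$). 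Iterating $\mathrm{asympt}$ along the arcs of a link pattern $\gamma$, innermost arc first, realises the fusion functional $\ell_\gamma$ of Theorem \ref{radial meander}, and that theorem records the crucial evaluation $\ell_\gamma(\mathcal{J}_\alpha^{(m,n)})=\langle\alpha\mid\gamma\rangle_{n,m}=\mathcal{M}_\kappa(\alpha,\gamma)$.

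The proof of (ASY) is then linear algebra with an induction on $n$. Because the arc $\{j,j+1\}$ between consecutive nodes is always innermost in the pattern $\mathrm{cup}_j\gamma'$ obtained from a pattern $\gamma'$ on the $n-2$ surviving nodes by adjoining it, the fusion functionals compose: $\ell_{\gamma'}\circ\mathrm{asympt}_j=\ell_{\mathrm{cup}_j\gamma'}$. Applying $\mathrm{asympt}_j$ to $\mathcal{Z}_\beta$ gives an element of the degree-$(n-2)$ solution span, which by the inductive step equals $\sum_{\gamma'}c_{\gamma'}\mathcal{Z}_{\gamma'}$; evaluating with $\ell_{\gamma'}$ and using $\ell_{\gamma''}(\mathcal{Z}_{\gamma'})=\delta_{\gamma'\gamma''}$ together with the composition rule yields
\[ c_{\gamma'}=\ell_{\gamma'}\big(\mathrm{asympt}_j\,\mathcal{Z}_\beta\big)=\sum_{\alpha}\mathcal{M}_\kappa(\alpha,\beta)^{-1}\,\ell_{\mathrm{cup}_j\gamma'}\big(\mathcal{J}_\alpha^{(m,n)}\big)=\sum_{\alpha}\mathcal{M}_\kappa(\alpha,\beta)^{-1}\,\mathcal{M}_\kappa(\alpha,\mathrm{cup}_j\gamma')=\delta_{\beta,\,\mathrm{cup}_j\gamma'}. \]
Since $\mathrm{cup}_j$ is a bijection from patterns on the surviving nodes onto $\{\beta':\{j,j+1\}\in\beta'\}$, this coefficient is $1$ exactly when $\{j,j+1\}\in\beta$ and $\gamma'=\hat\beta$, and is $0$ otherwise; hence $\mathrm{asympt}_j\mathcal{Z}_\beta=\mathcal{Z}_{\hat\beta}(\hat{\boldsymbol\theta})$ if $\{j,j+1\}\in\beta$ and $=0$ if $\{j,j+1\}\notin\beta$, which is precisely (ASY). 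The induction bottoms out at $m=0$ (where $\mathcal{M}_\kappa=(1)$) and $n=2$, both checked directly, and the identical argument in the chordal case uses Theorem \ref{chordal meander} and the chordal interval-collapse theorem.

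The main obstacle is the identification at the end of the second paragraph: proving, uniformly in $\kappa\in(0,8)$, that $\mathrm{asympt}_j\mathcal{J}_\alpha^{(m,n)}$ is exactly the stated combination of reduced integrals --- equivalently, that the elementary collapse acts on $\mathrm{span}\{\mathcal{J}_\alpha\}$ as the affine Temperley--Lieb cap generator, with the correct loop weights $n(\kappa)$ and $2$. Configurations 3, 4 and 5 of the radial theorem are where this is delicate: the linear relations among the definite integrals $I_k$ and $I_{m,k}$ obtained by deforming contours through large semicircles must reproduce exactly the strand-reconnection rule, the phase and sine prefactors attached to the Pochhammer contours in the regime $0<\kappa\le4$ must cancel into the same uniform answer as for $4<\kappa<8$, and in Configuration 5 one must check that the complementary-arc collapse genuinely produces the non-contractible weight $2$ rather than $n(\kappa)$. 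Once ``$\mathrm{asympt}_j=$ cap generator'' and the composition rule $\ell_{\gamma'}\circ\mathrm{asympt}_j=\ell_{\mathrm{cup}_j\gamma'}$ are secured, the rest is the matrix inversion, legitimate precisely because $\kappa$ is irrational, and positivity of $\mathcal{Z}_\beta$ is the only gap that remains.
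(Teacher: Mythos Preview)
Your proposal is correct and matches the paper's approach: the paper likewise defines $\mathcal{Z}_\beta$ via the inverse meander matrix, notes that the interval-collapse theorems supply the required asymptotics, and explicitly flags positivity as the unproved ingredient that makes this a conjecture rather than a theorem. Your inductive verification of (ASY) via the composition rule $\ell_{\gamma'}\circ\mathrm{asympt}_j=\ell_{\mathrm{cup}_j\gamma'}$ is in fact more detailed than what the paper provides, which merely asserts that ``asymptotic analysis of the Coulomb gas ground state solutions confirms that $\mathcal{Z}_\beta$ satisfies the required boundary asymptotics.''
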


Although the positivity of the pure partition functions $\mathcal{Z}_\beta$ is not rigorously established, asymptotic analysis of the Coulomb gas ground state solutions confirms that $\mathcal{Z}_\beta$ satisfies the required boundary asymptotics.
\section{Asymptotics of Excited Solutions}
\label{asymptotic of exicited solutions}

The methodology developed for ground state solutions naturally extends to the analysis of excited solutions in both radial and chordal multiple SLE($\kappa$) systems. 

In the Coulomb gas formalism, such solutions are represented by integrals that include an additional integration variable~$\zeta_1$, corresponding to the excitation. This variable is integrated over an outer contour surrounding the marked point, yielding a $(m+1)$-fold integral of the form
\begin{equation}
    \mathcal{K}^{(m,n)}_\alpha(\boldsymbol{x}) := \oint_{C(\zeta_1)} d\zeta_1 \oint_{\mathcal{C}_1} \cdots \oint_{\mathcal{C}_m} \Phi_\kappa(\boldsymbol{x}, \zeta_1, \boldsymbol{\xi}) \, d\xi_m \cdots d\xi_1,
\end{equation}
where $\boldsymbol{\xi} = (\xi_1, \ldots, \xi_m)$ are the screening charges, and $\Phi_\kappa$ is the Coulomb gas integrand depending on all variables.

To facilitate the asymptotic analysis under the limit where pairs of points in~$\boldsymbol{x}$ coalesce, we exchange the order of integration. That is, we treat~$\zeta_1$ as a fixed parameter and first evaluate the $m$-fold Coulomb gas integral:
\begin{equation}
    F(\zeta_1; \boldsymbol{x}) := \oint_{\mathcal{C}_1} \cdots \oint_{\mathcal{C}_m} \Phi_\kappa(\boldsymbol{x}, \zeta_1, \boldsymbol{\xi}) \, d\xi_m \cdots d\xi_1.
\end{equation}
We subsequently coalesce pairs of boundary points and finally reduce to the contour integral:
\begin{equation}
     \oint_{C(\zeta_1)} F(\zeta_1) \, d\zeta_1.
\end{equation}

This strategy isolates the contribution of the excitation and makes the structure of the excited solution amenable to precise asymptotic analysis.

\printbibliography

\end{document}